\documentclass[11pt]{amsart}
\usepackage[T1]{fontenc}
\usepackage{txfonts}
\usepackage{float}
\usepackage{amssymb,verbatim}
\usepackage[all]{xy}
\usepackage{subfig}
\usepackage{tikz}
\usepackage{color}
\usepackage{a4wide}
\usepackage{mathrsfs}
\usepackage{longtable}
\usepackage{hyperref}
\usepackage{wrapfig}
\usetikzlibrary{arrows}
\DeclareMathAlphabet{\mathpzc}{OT1}{pzc}{m}{it}

\newcommand{\R}{\mathbb{R}}
\newcommand{\C}{\mathbb{C}}
\newcommand\Z{\mathbb{Z}}
\newcommand{\N}{\mathbb{N}}
\newcommand{\Q}{\mathbb{Q}}


\newcommand{\Pb}{\mathbb{P}}

\newcommand{\T}{\mathrm{T}}


\newcommand{\fracs}{\mathfrak{s}}

\newcommand{\ff}{\mathbf{f}}

\newcommand{\xx}{\mathbf{x}}


\newcommand{\Ccal}{\mathcal{C}}
\newcommand{\Dcal}{\mathcal{D}}
\newcommand{\Ecal}{\mathcal{E}}

\newcommand{\Hcal}{\mathcal{H}}
\newcommand{\Ical}{\mathcal{I}}

\newcommand{\Kcal}{\mathcal{K}}
\newcommand{\Lcal}{\mathcal{L}}
\newcommand{\Mcal}{\mathcal{M}}

\newcommand{\Ocal}{\mathcal{O}}
\newcommand{\Pcal}{\mathcal{P}}

\newcommand{\Scal}{\mathcal{S}}

\newcommand{\Ucal}{\mathcal{U}}

\newcommand{\Xcal}{\mathcal{X}}


\newcommand{\Cc}{\mathcal{C}}
\newcommand{\Dc}{\mathcal{D}}
\newcommand{\Ec}{\mathcal{E}}
\newcommand{\Hc}{\mathcal{H}}
\newcommand{\Ic}{\mathcal{I}}

\newcommand{\Kc}{\mathcal{K}}
\newcommand{\Lc}{\mathcal{L}}

\newcommand{\Pc}{\mathcal{P}}

\newcommand{\Sc}{\mathcal{S}}

\newcommand{\Uc}{\mathcal{U}}




\newcommand{\NN}{\mathscr{N}}
\newcommand{\OO}{\mathscr{O}}


\newcommand{\CP}{\mathbb{P}_{\mathbb{C}}}
\newcommand{\SL}{{\rm SL}}

\newcommand{\Mod}{\mathcal{M}}

\newcommand{\vol}{{\rm vol}}



\newcommand{\omg}{\omega}


\newcommand{\Del}{\Delta}

\newcommand{\ol}{\overline}

\newcommand{\ra}{\rightarrow}



\newcommand{\strate}{\Omega^d\Mcal_{g,n}(\kappa)}

\newcommand{\stratesp}{\Omega^d\Mcal_{0,n}(\kappa)}
\newcommand{\pstratesp}{\Mcal_{0,n}(\kappa)}

\newcommand{\clpstrate}{\Pb\Omega^d\ol{\Mod}_{g,n}(\kappa)}
\newcommand{\clpstratesp}{\Pb\Omega^d\ol{\Mod}_{0,n}(\kappa)}
\newcommand{\blowupsp}{\widehat{\Mod}_{0,n}(\kappa)}



\newtheorem{Theorem}{Theorem}[section]
\newtheorem{Corollary}[Theorem]{Corollary}
\newtheorem{Lemma}[Theorem]{Lemma}
\newtheorem{Proposition}[Theorem]{Proposition}

\newtheorem{Definition}[Theorem]{Definition}

\theoremstyle{remark}
\theoremstyle{example}
\newtheorem{Remark}[Theorem]{Remark}
\setlength{\textheight}{20.0cm} 
\setlength{\headsep}{0.8cm}

\begin{document}
\title[Incidence Variety for $d$-differentials in genus 0]{The Incidence Variety Compactification of strata of $d$-differentials in genus $0$}

\author[D.-M. Nguyen]{Duc-Manh Nguyen}


\address{Univ. Bordeaux, CNRS, Bordeaux INP, IMB, UMR 5251, F-33405 Talence, France}
\email[D.-M.~Nguyen]{duc-manh.nguyen@math.u-bordeaux.fr}


\date{\today}
\begin{abstract}
Given $d\in \mathbb{Z}_{\geq 2}$, for every $\kappa=(k_1,\dots,k_n) \in \mathbb{Z}^{n}$ such that $k_i\geq 1-d$ and $k_1+\dots+k_n=-2d$, denote by $\Omega^d\mathcal{M}_{0,n}(\kappa)$ and  $\mathbb{P}\Omega^d\mathcal{M}_{0,n}(\kappa)$ the corresponding stratum of $d$-differentials in genus $0$ and its projectivization respectively.
We specify an ideal sheaf of the structure sheaf of $\overline{\mathcal{M}}_{0,n}$ and show that the incidence variety compactification $\mathbb{P}\overline{\Omega}^d\mathcal{M}_{0,n}(\kappa)$ of $\mathbb{P}\Omega^d\mathcal{M}_{0,n}(\kappa)$ is isomorphic to the blow-up of $\overline{\mathcal{M}}_{0,n}$ along this sheaf of ideals. We also obtain an explicit divisor representative of the tautological line bundle on the incidence variety.
In an accompanying work~\cite{Nguyen23}, the construction of $\mathbb{P}\overline{\Omega}^d\mathcal{M}_{0,n}(\kappa)$ in this paper will be used to prove a recursive formula computing the volumes of the spaces of flat metric with fixed conical angles on the sphere.
\end{abstract}

\maketitle


\section{Introduction}\label{sec:intro}
\subsection{Statements of the main results}\label{sec:state:results}
Given a Riemann surface $X$, we  denote by $K_X$ its canonical line bundle. Let $g$ be a non-negative integer, and $d$ a positive one.
For any vector $\kappa=(k_1,\dots,k_n) \in \Z^n$ such that $k_i>-d$,  and $k_1+\dots+k_n=d(2g-2)$, denote by $\strate$ the space of tuples $(X,x_1,\dots,x_n,q)$, where $X$ is a compact Riemann surface of genus $g$, $\{x_1,\dots,x_n\}$ are $n$ marked points on $X$, and $q$ is a non-zero meromorphic $d$-differential on $X$ (that is a meromorphic section of the bundle $K^{\otimes d}_X$) such that
$$
\mathrm{div}(q)=\sum_{i=1}^n k_ix_i.
$$
The space $\strate$ is called a {\em stratum} of  $d$-differentials in genus $g$.
There is a natural action of $\C^*$ on $\strate$ which consists of multiplying the differential $q$ by a scalar in $\C^*$.
We denote by $\Pb\strate$ the quotient of $\strate$ by this $ \C^*$-action.

In this paper we investigate a natural compactification of strata of $d$-differentials in genus $0$, which is known as the {\em Incidence Variety Compactification} (see \cite{Gen18,BCGGM1,BCGGM2}).
Denote by $\ol{\Mod}_{0,n}$  the Deligne-Mumford-Knudsen compactification of $\Mod_{0,n}$. There is a holomorphic vector bundle $\ol{\Hc}^{(d)}_{0,n}$ over $\ol{\Mod}_{0,n}$ whose fiber over a point $(C,x_1,\dots,x_n) \in \ol{\Mod}_{0,n}$ is identified with the space $H^0(C,d\omg_C+\sum_{i=1}^n(d-1)x_i)$ ($\omg_C$ is the dualizing sheaf of $C$).
The restriction of $\ol{\Hc}^{(d)}_{0,n}$ to $\Mod_{0,n}$ is denoted by $\Hc^{(d)}_{0,n}$.
By definition, $\Pb\stratesp$ is a subset of  the projective bundle $\Pb\Hcal^{(d)}_{0,n}$ associated to $\Hcal^{(d)}_{0,n}$.
The Incidence Variety Compactification of $\Pb\stratesp$, which will be denoted by $\clpstratesp$,  is defined  to be its closure  in $\Pb\ol{\Hcal}^{(d)}_{0,n}$. 
Define
\begin{equation}\label{eq:def:weights}
\mu_i:=-\frac{k_i}{d}, \; i=1,\dots,n, \quad \text{ and } \quad \mu:=(\mu_1,\dots,\mu_n).
\end{equation}
We will call $\mu_i$ the weight of the $i$-th marked point on the pointed curves parametrized by $\ol{\Mod}_{0,n}$. Observe that $\mu_i < 1$, and since  $k_1+\dots+k_n=-2d$, we must have $\mu_1+\dots+\mu_n=2$. Throughout the paper, given $I \subset \{1,\dots,n\}$, we define
$$
\mu(I):=\sum_{i\in I}\mu_i.
$$

Recall that the boundary $\partial \ol{\Mod}_{0,n}:=\ol{\Mod}_{0,n}\setminus\Mod_{0,n}$ of $\ol{\Mod}_{0,n}$ is a simple normal crossing divisor.
The irreducible components of  $\partial \ol{\Mod}_{0,n}$ are in bijection with partitions of $\{1,\dots,n\}$ into two subsets $\{I_0,I_1\}$ such that $\min\{|I_0|,|I_1|\geq 2\}$.
The irreducible component of  $\partial \ol{\Mod}_{0,n}$ associated to $\Sc$ will be denoted by $D_\Sc$.
Denote by $\Pcal$ the set of partitions associated to the components of $\partial\ol{\Mod}_{0,n}$. For any $\Sc=\{I_0,I_1\}\in \Pcal$, by convention  we will always choose the numbering of the subsets $I_0,I_1$ such that
\begin{equation}\label{eq:conv:partition}
\mu(I_0) \leq 1 \leq \mu(I_1) \quad \Leftrightarrow \quad
\sum_{i\in I_0} k_i \geq -d \geq \sum_{i\in I_1} k_i.
\end{equation}
We will associate to each component $D_\Sc$ of $\partial\ol{\Mod}_{0,n}$ the weight $\mu_{\Sc}$ defined by
\begin{equation}\label{eq:def:weight:divisor}
\mu_\Sc:=\frac{1}{2}\cdot\left(\mu(I_1) - \mu(I_0))=1-\mu(I_0)\right).
\end{equation}
Note that  $\mu_\Sc$ is always non-negative.
We warn the reader that our convention on the numbering of $I_0,I_1$, and our definition of $\mu_\Sc$ are different from \cite{KN18}.

Consider now a point $\xx$ in $\ol{\Mod}_{0,n}$. Let $C_\xx$ denote the stable curve parametrized by $\xx$. Assume that  $C_\xx$ has $(r+1)$ irreducible components, which will be denoted by $C^0_\xx,\dots,C^r_\xx$.
Let $\NN(\xx)$ be the set of nodes of $C_\xx$. Note that $|\NN(\xx)|=r$.
For any $\alpha \in \NN(\xx)$, let $\Sc_\alpha:=\{I_{0,\alpha},I_{1,\alpha}\}$ be the associated partition of $\{1,\dots,n\}$.
Recall that $\dim\ol{\Mod}_{0,n}=n-3$.
Let $\Ucal_\xx$ be a neighborhood of $\xx$ which satisfies the following conditions
\begin{itemize}
 \item[(i)] $\Ucal_\xx$ does not intersect any boundary divisor $D_\Scal$ such that $\Scal \notin \{\Scal_\alpha, \, \alpha \in \NN(\xx)\}$,

 \item[(ii)] $\Ucal_\xx$ can be identified with an open subset of $\C^{n-3}$ such that, for each $\alpha \in \NN(\xx)$, there is a coordinate function $t_\alpha$ such that $D_{\Scal_\alpha}\cap \Ucal_\xx$ is defined by the equation $t_\alpha=0$.
\end{itemize}
As $\ol{\Mod}_{0,n}$ is a projective variety, we can actually choose $\Ucal_\xx$ to be an open affine of $\ol{\Mod}_{0,n}$ such that $t_\alpha$ are elements of the coordinate ring of $\Ucal_\xx$.

Splitting a node $\alpha \in \NN(\xx)$ into two points, we obtain two subcurves $\hat{C}^{0}_{\xx,\alpha}, \hat{C}^{1}_{\xx,\alpha}$ of $C_\xx$, where $\hat{C}^{k}_{\xx,\alpha}$ contains  the $i$-th marked points with $i\in I_{k,\alpha}$.
For each $j\in\{0,\dots,r\}$, define
\begin{equation}\label{eq:def:beta}
\beta_{j,\alpha}=\left\{ \begin{array}{ll}
                          d\mu_{\Scal_\alpha} & \text{ if } C^j_\xx \subset \hat{C}^{0}_{\xx,\alpha}, \\
                          0 & \text{ otherwise}.
                         \end{array} \right.
\end{equation}
Let
\begin{equation}\label{eq:def:t:beta}
\beta_j=(\beta_{j,\alpha})_{\alpha \in \NN(\xx)} \in (\Z_{\geq 0})^{\NN(\xx)}, \quad \text{ and } \quad t^{\beta_j}:=\prod_{\alpha \in \NN(\xx)}t_\alpha^{\beta_{j,\alpha}}.
\end{equation}
Let  $\Ocal_{\ol{\Mod}_{0,n}}$ be the structure sheaf of $\ol{\Mod}_{0,n}$, and $\Ical_{\Ucal_\xx}$ be the ideal sheaf of $\Ocal_{\ol{\Mod}_{0,n}|\Ucal_\xx}$ generated by $\{t^{\beta_0},\dots,t^{\beta_r}\}$. We will prove
\begin{Theorem}\label{th:IVC:is:blowup}\hfill
\begin{itemize}
\item[(i)] The family $\{\Ical_{\Ucal_\xx}, \, \xx \in \ol{\Mod}_{0,n}\}$ defines a sheaf of ideals $\Ical$ of $\Ocal_{\ol{\Mod}_{0,n}}$.

\item[(ii)] The incidence variety compactification $\clpstratesp$ of $\Pb\stratesp$ is isomorphic to the blow-up $\blowupsp$ of $\ol{\Mod}_{0,n}$ along $\Ical$.


\item[(iii)] Every irreducible component of $\partial\blowupsp:=\blowupsp\setminus\Mod_{0,n}$ is a divisor. The set of irreducible components of $\partial\blowupsp$ is in bijection with the set $\hat{\Pcal}(\mu)$ of partitions $\Scal$ of $\{1,\dots,n\}$ such that up to a renumbering of the subsets in $\Scal$ either
\begin{itemize}
\item[$\bullet$] $\Scal=\{I_0,I_1\} \in \Pc$,  or

\item[$\bullet$] $\Scal=\{I_0,I_1,\dots,I_r\}$, with $r\geq 2$, $\mu(I_0) <1$, and $\mu(I_j)>1$ for all $j=1,\dots,r$.
\end{itemize}
\end{itemize}
\end{Theorem}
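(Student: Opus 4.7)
Plan. For part (i), I would verify gluing directly. Given $\yy \in \Ucal_\xx$, the nodes of $C_\xx$ that persist on $C_\yy$ form the set $N_\yy := \{\alpha \in \NN(\xx) : t_\alpha(\yy) = 0\}$, and the components of $C_\yy$ are obtained by contracting the edges in $\NN(\xx)\setminus N_\yy$ in the dual graph of $C_\xx$. The key observation is that for $\alpha\in N_\yy$ and $j$ such that $C^j_\xx$ lies in the component $C^{k(j)}_\yy$, the condition ``$C^j_\xx\subset\hat{C}^0_{\xx,\alpha}$'' is equivalent to ``$C^{k(j)}_\yy\subset\hat{C}^0_{\yy,\alpha}$'', since removing the edge $\alpha$ yields the same partition of marked points whether computed on $C_\xx$ or $C_\yy$. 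Hence each generator $t^{\beta_j}$ of $\Ical_{\Ucal_\xx}$ restricts on $\Ucal_\yy$ as a unit (from the factors $t_\alpha$ with $\alpha\notin N_\yy$) times a generator of $\Ical_{\Ucal_\yy}$, and since $j\mapsto k(j)$ is surjective we obtain $\Ical_{\Ucal_\xx}|_{\Ucal_\yy}=\Ical_{\Ucal_\yy}$.

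For part (ii), I would invoke the universal property of the blow-up. First, I show that the pullback of $\Ical$ via $\pi'\colon\clpstratesp\to\ol{\Mod}_{0,n}$ is invertible, producing a morphism $\Phi\colon\clpstratesp\to\blowupsp$. Working locally near $\xx\in\ol{\Mod}_{0,n}$, I would build a frame $\omg_0,\dots,\omg_r$ of $\ol{\Hcal}^{(d)}_{0,n}$ from the component differentials on $C^j_\xx$ (rigid in genus zero, uniquely determined up to scalar by their prescribed zeros and poles), so a tautological section reads $[a_0\omg_0:\cdots:a_r\omg_r]$. A plumbing analysis in a smoothing chart $zw=t_\alpha$ shows that a section extending across the central fiber must satisfy $a_j \equiv c\cdot t^{\beta_j}$ modulo higher-order terms, with $c$ a unit; the exponent $d\mu_{\Scal_\alpha}$ emerges as the difference between the natural pole order $d(2-\mu(I_{0,\alpha}))$ of $\omg_j$ on the $\hat{C}^0_{\xx,\alpha}$-side at the node and the maximal order $d$ permitted by sections of $d\omg_C+(d-1)\sum x_i$. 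The inverse $\Psi\colon\blowupsp\to\clpstratesp$ is constructed on charts where $\Ical$ becomes principal generated by $t^{\beta_{j_0}}$: using the regular functions $t^{\beta_j}/t^{\beta_{j_0}}$ as homogeneous coordinates I produce a family of twisted $d$-differentials via the reverse plumbing and verify it lands in $\clpstratesp$ (which follows by taking closure of the obvious isomorphism over $\Mod_{0,n}$) and inverts $\Phi$.

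The main technical obstacle is the plumbing analysis: one must verify the scaling exponent is exactly $d\mu_{\Scal_\alpha}$ when $C^j\subset\hat{C}^0_{\xx,\alpha}$ and exactly $0$ when $C^j\subset\hat{C}^1_{\xx,\alpha}$. Genus-zero rigidity of the component differentials makes this tractable compared to the general BCGGM twisted-differential framework, reducing it to a local residue and leading-order matching computation at each node; the asymmetry of the two sides stems from the pole-order condition $d$ being breached only on the light side.

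For part (iii), I classify the irreducible components of $\partial\blowupsp$ by examining when generators of $\Ical$ collapse. A generator $t^{\beta_j}$ is redundant iff $t^{\beta_k}\mid t^{\beta_j}$ for some $k\ne j$, which amounts to $C^k_\xx\subset\hat{C}^0_{\xx,\alpha}$ whenever $C^j_\xx$ is. In particular $\Ical$ is locally principal at $\xx$ iff some $C^{j_0}_\xx$ lies on the heavy side $\hat{C}^1_{\xx,\alpha}$ of every node, giving $\beta_{j_0}=0$. At a generic point of $D_\Scal$ with $\Scal\in\Pc$ there is a single node, so the convention $\mu(I_0)\leq 1\leq\mu(I_1)$ produces such a ``heavy-centroid'' component; the blow-up is thus an isomorphism on a neighborhood, and the strict transforms $\tilde D_\Scal$ are irreducible boundary divisors. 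For exceptional components, a codimension count shows that a codim-$r$ stratum contributes a divisor to $\blowupsp$ only when exactly $r$ essential generators remain after removing redundancies (so the fiber is $\Pb^{r-1}$); a combinatorial argument on the dual tree then establishes that this happens precisely when the tree is a star with one light center $\mu(I_0)<1$ and $r\geq 2$ heavy leaves $\mu(I_j)>1$---exactly the second type of partition in $\hat\Pc(\mu)$---while other tree shapes (paths, more general subtrees) yield too few essential generators and contribute only to codimension $\geq 2$ loci. Each such star-stratum is irreducible and contributes a single irreducible exceptional divisor, completing the bijection.
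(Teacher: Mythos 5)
Your parts (i) and (iii) run essentially parallel to the paper: the gluing argument for $\Ical$ is the paper's Theorem~\ref{th:ideal:sheaf:def}, and your classification of boundary components (strict transforms of the $D_\Sc$, plus exceptional divisors over exactly the ``star'' strata, detected by the dimension count $\dim S+(r_0-1)\geq n-4$ forcing the fiber $\CP^{r-1}$) is the paper's Proposition~\ref{prop:proj:div:part:codim:2}; the only caveat there is that your ``heavy side of every node'' criterion must be phrased via principal \emph{subcurves} to handle nodes of weight $\mu_\Sc=0$, as in Definition~\ref{def:princ:comp}.

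The genuine gap is in the forward half of part (ii). Your morphism $\Phi\colon\clpstratesp\to\blowupsp$ requires that $\pi'^{-1}\Ical\cdot\Ocal_{\clpstratesp}$ be invertible, and your justification rests on a local frame $\omega_0,\dots,\omega_r$ of $\ol{\Hcal}^{(d)}_{0,n}$ ``built from the component differentials.'' Such a frame does not exist at a boundary point: for a component $C^j_\xx$ lying in $\hat{C}^0_{\xx,\alpha}$ the rigid component differential has a pole of order $d(2-\mu(I_{0,\alpha}))=d(1+\mu_{\Sc_\alpha})>d$ at the node branch, whereas sections of $\Kc^{(d)}_\xx$ admit poles of order at most $d$ there; so the light-side differentials are not values of local sections of $\ol{\Hcal}^{(d)}_{0,n}$ (this is exactly why the paper must rescale and work with the holomorphic sections $t^{\beta_j}\Phi_j$ of Proposition~\ref{prop:Kaw:trivial:ratio}), and in any case $\mathrm{rk}\,\ol{\Hcal}^{(d)}_{0,n}=(d-1)(n-2)-1\neq r+1$ in general. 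Moreover, even granting well-defined coordinates $a_j$, the asymptotic statement ``$a_j\equiv c\,t^{\beta_j}$ modulo higher-order terms'' is not enough: invertibility of the pulled-back ideal needs exact local divisibility relations $t^{\beta_j}\in(t^{\beta_{j_0}})$ on $\clpstratesp$, and proving those amounts to already knowing the local structure of the incidence variety along the boundary --- essentially the content of the theorem. The paper sidesteps this entirely: it only constructs the reverse map (your $\Psi$, the paper's $\hat{\ff}$ of Theorem~\ref{th:inject:blowup:proj:bdl}), obtained from the sections $t^{\beta_j}\Phi_j$ via cohomology and base change as a section of $\hat{p}^*\Pb\ol{\Hcal}^{(d)}_{0,n}$, shows it is an injective proper morphism extending $\ff$, and then identifies its image with $\clpstratesp$ by compactness and density of $\Mod_{0,n}$ --- no analysis of the IVC itself is needed. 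To repair your argument, either prove the exact proportionality relations on the IVC by a genuine local study of its boundary points, or drop the universal-property half and instead establish injectivity (and the embedding property) of $\Psi$ as the paper does.
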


The irreducible component of $\partial\blowupsp$ associated to $\Sc\in \hat{\Pc}(\mu)$ will be denoted by $\hat{D}_\Sc$.
By construction $\hat{p}^{-1}\Ical\cdot\Ocal_{\blowupsp}$, where $\Ic$ is the ideal sheaf in Theorem~\ref{th:IVC:is:blowup}~(i), gives a Cartier divisor in  $\blowupsp$, which will be called the {\em exceptional divisor} and denoted by $\Ecal$.
Our second main result gives  explicit expressions of $\Ec$
and the restriction of the tautological line bundle on $\Pb\ol{\Hc}^{(d)}$ to $\clpstratesp\simeq \blowupsp$.
\begin{Theorem}\label{th:vol:n:inters:g0}\hfill
\begin{itemize}
\item[(a)] For all $\Sc=\{I_0,I_1,\dots,I_r\} \in \hat{\Pc}(\mu)$, let us   define $m(\Sc):=d^r\cdot \prod_{j=1}^r(\mu(I_j)-1)$.
Then the  Weil divisor associated to $\Ec$ is given by
\begin{equation}\label{eq:except:Weil:div}
\Ec \sim \sum_{\Sc \in \hat{\Pc}(\mu)}(|\Sc|-2)\cdot m(\Sc)\cdot\hat{D}_\Sc
\end{equation}
where $|\Sc|$ is the length of $\Sc$.

\item[(b)] Consider the following $\Q$-divisor in $\ol{\Mod}_{0,n}$
$$
\Dc_\mu := \frac{d}{(n-2)(n-1)}\sum_{\Scal=\{I_0,I_1\}\in \Pcal}(|I_0|-1)(|I_1|-1-(n-1)\mu_\Scal)\cdot D_\Scal.
$$
Let $\hat{\Lc}_\mu$ denote the restriction of the tautological line bundle $\OO(-1)_{\Pb\ol{\Hcal}^{(d)}_{0,n}}$ to $\blowupsp$. Then we have
\begin{equation}\label{eq:tauto:ln:bdl:expr}
\hat{\Lc}\sim \hat{p}^*\Dc_\mu+\Ecal.
\end{equation}

\item[(c)] Let $\hat{\Dc}_\mu$ denote the $\Q$-divisor $\hat{p}^*\Dc_\mu+\Ecal$ in $\blowupsp$. Assume that $d$ does not divide $k_i$, for all $i=1,\dots,n$, then
\begin{equation}\label{eq:vol:n:inters:g0}
\vol_1(\Pb\stratesp)=\frac{(-1)^{n-3}}{d^{n-3}}\cdot\frac{(2\pi)^{n-2}}{2^{n-2}(n-2)!}\cdot\hat{\Dc}_\mu^{n-3}
\end{equation}
where $\vol_1$ is the volume form on $\Pb\stratesp$ defined as in \cite[\textsection 6]{Ng22}, and  $\hat{\Dc}_\mu^{n-3}$ means the self-intersection number $\underbrace{\hat{\Dc}_\mu\cdots\hat{\Dc}_\mu}_{n-3}$ of $\hat{\Dc}_\mu$ in $\blowupsp$.
\end{itemize}
\end{Theorem}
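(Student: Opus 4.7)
The plan is to prove parts~(a), (b), (c) in order, with (c) as a direct corollary of (b) via intersection theory. The structural observation underpinning parts~(a) and (b) is that the ideal $\Ic$ of Theorem~\ref{th:IVC:is:blowup}~(i) is locally monomial in the boundary coordinates $t_\alpha$, so $\hat{p}\colon\blowupsp\to\ol{\Mod}_{0,n}$ is locally a toric birational modification, and every assertion about $\Ec$ reduces to a combinatorial computation on the associated fan.

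For part~(a), I would work in an affine chart $\Ucal_\xx$ around a point $\xx$ with $|\NN(\xx)|=r$ incident boundary divisors, where $\Ic|_{\Ucal_\xx}=(t^{\beta_0},\dots,t^{\beta_r})$. Theorem~\ref{th:IVC:is:blowup}~(iii) identifies the boundary components of $\blowupsp$ meeting $\hat{p}^{-1}(\Ucal_\xx)$ with partitions in $\hat{\Pc}(\mu)$ refining the nodal partition. For $\Sc=\{I_0,I_1\}$ of length two, $\hat{D}_\Sc$ is the strict transform of $D_\Sc$, and at a generic point only one coordinate $t_\alpha$ vanishes, so $\Ic$ is already principal there and $\hat{D}_\Sc$ does not appear in $\Ec$: this matches the $|\Sc|-2=0$ factor. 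For $|\Sc|=r+1\geq 3$, the divisor $\hat{D}_\Sc$ corresponds to a specific ray in the toric fan of the blow-up; I would compute the order of vanishing of $\Ic$ along that ray using \eqref{eq:def:beta} and the convention \eqref{eq:conv:partition}. The product $d^r\prod_{j=1}^r(\mu(I_j)-1)$ in $m(\Sc)$ reflects that $\beta_{j,\alpha}$ equals $d(1-\mu(I_{0,\alpha}))$ or $0$ depending on which side of the node $\alpha$ the $j$-th irreducible component of $C_\xx$ lies, while the extra factor $|\Sc|-2$ should emerge from the combinatorial count of how many generators of $\Ic$ attain the minimum exponent along the relevant ray.

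For part~(b), the plan is to exhibit a meromorphic section of $\hat{\Lc}\otimes\hat{p}^*\Oc_{\ol{\Mod}_{0,n}}(-\Dc_\mu)$ over $\blowupsp$ whose zero divisor equals $\Ec$. A natural candidate is the universal $d$-differential on the universal curve over $\clpstratesp\simeq\blowupsp$, interpreted as a section of the tautological line bundle $\hat{\Lc}$. Over $\Mod_{0,n}$ I would compare this section to a reference $d$-differential built from cross-ratios of the marked points; the coefficient $\frac{d}{(n-2)(n-1)}(|I_0|-1)(|I_1|-1-(n-1)\mu_\Sc)$ of $D_\Sc$ in $\Dc_\mu$ is precisely what is required to reconcile the two on the interior of each boundary divisor, and the behaviour along the exceptional components $\hat{D}_\Sc$ with $|\Sc|\geq 3$ is controlled by part~(a); together these give \eqref{eq:tauto:ln:bdl:expr}.

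Part~(c) then follows by intersection theory. The volume form $\vol_1$ of \cite[\textsection 6]{Ng22} is, up to the stated normalization, the top wedge power of a K\"ahler form representing $-c_1(\hat{\Lc})$; the hypothesis $d\nmid k_i$ ensures the absence of orbifold contributions at the marked points, so the total volume equals $\int_{\blowupsp}(-c_1(\hat{\Lc}))^{n-3}$, which by part~(b) is the self-intersection $\hat{\Dc}_\mu^{n-3}$, up to the normalisation $(-1)^{n-3}(2\pi)^{n-2}/(d^{n-3}2^{n-2}(n-2)!)$ arising from standard Chern-form conventions. The principal difficulty I anticipate is part~(a): one must carry out the toric multiplicity calculation so as to simultaneously recover the partition combinatorics of Theorem~\ref{th:IVC:is:blowup}~(iii) and produce the exact coefficient $(|\Sc|-2)\cdot m(\Sc)$. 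Part~(b) will also require care in identifying the universal section of $\hat{\Lc}$ across the exceptional locus, where $\ol{\Hc}^{(d)}_{0,n}$ and the incidence variety compactification may differ; part~(c) is then essentially formal.
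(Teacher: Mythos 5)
Your plan reduces everything to the right local picture (the ideal $\Ic$ is monomial in the boundary coordinates), but at the two places where the theorem's actual coefficients come from, the proposal either offers an incorrect mechanism or assumes the answer. For part~(a): in the chart around a generic point of the stratum $D^*_\Sc$, with $m_j=d(\mu(I_j)-1)$, the ideal is generated by $t^{\beta_k}=\prod_{j\neq k}t_j^{m_j}$, $k=1,\dots,r$, and \emph{all} $r=|\Sc|-1$ of these generators have the same order along $\hat{D}_\Sc$; so "the number of generators attaining the minimum exponent" is $|\Sc|-1$, not $|\Sc|-2$, and in any case the coefficient of $\hat{D}_\Sc$ in $\Ec$ is the minimal order itself, not a count of minimizers. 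What is actually needed (and what the paper proves in Lemma~\ref{lm:vanish:ord:bdry:div:1} by a direct length computation $\ell(A/(t_j))$ in the local ring at the generic point of $\hat{D}_\Sc$, in the sense of Fulton) is $\mathrm{ord}_{\hat{D}_\Sc}(t_j)=m(\Sc)/m_j$; then the local equation $t^{\beta_k}$ of $\Ec$ has order $\sum_{j\neq k}m_j\cdot m(\Sc)/m_j=(r-1)\,m(\Sc)$, which is where $(|\Sc|-2)$ really comes from (Theorem~\ref{th:coeff:bound:div:in:E}). If you insist on a toric/valuative computation you must additionally justify that the order function on the (a priori non-normal) blow-up agrees with the monomial valuation read off the normalization; the length computation avoids this. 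Also, for $|\Sc|=2$ the correct reason the proper transform does not occur in $\Ec$ is that $\Ic$ is the \emph{unit} ideal near a generic point of $D_\Sc$ (Proposition~\ref{prop:supp:I:princ:comp}); "principal" alone would not give coefficient zero.

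For part~(b) the proposal is circular at the decisive step: you never derive the divisor $\Dc_\mu$, you only assert that its coefficients are "precisely what is required to reconcile" your section with a reference differential; moreover "the universal $d$-differential as a section of $\hat{\Lc}_\mu$" is not canonically defined — one only has the local trivializations $\Phi_i$. The paper's route is different and contains the missing computation: it introduces the Kawamata bundle $\Kc_\mu=d\bigl(K_{\ol{\Cc}_{0,n}/\ol{\Mod}_{0,n}}+\sum_i\mu_i\Gamma_i-\sum_{\Scal}\mu_\Scal F^0_\Scal\bigr)$ on the universal curve, shows (Proposition~\ref{prop:kaw:trivial:sect}) that the sections $t^{\beta_j}\Phi_j$ trivialize it fiberwise, hence $\Kc_\mu\sim p^*\bar{\Lcal}_\mu$ with $\bar{\Lcal}_\mu\sim\Dc_\mu$ by a Grothendieck--Riemann--Roch computation quoted from \cite[\textsection 8.4]{KN18} (Theorem~\ref{th:push:Kaw:ln:bdl}); then writing $\Phi_i=(t^{\beta_i}\Phi_i)/t^{\beta_i}$ on each chart gives $\hat{\Lc}_\mu\sim\hat{p}^*\Dc_\mu+\Ec$. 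If you prefer to avoid $\Kc_\mu$, you could compute the divisor of an explicit rational section (say $\prod_i(z-x_i)^{k_i}(dz)^d$ with three marked points normalized to $0,1,\infty$), but the resulting boundary divisor is not symmetric in the marked points and must then be shown linearly equivalent to $\Dc_\mu$ using relations in $\mathrm{Pic}(\ol{\Mod}_{0,n})$ — a computation your sketch does not contain. Part~(c) is fine in outline: as in the paper, once (a) and (b) are established it follows by invoking \cite[Th.~1.4~(b)]{Ng22}, though the role of the hypothesis $d\nmid k_i$ should be taken from that reference rather than asserted as an orbifold statement.
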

\begin{Remark}\label{rk:th:vol}
In the  cases where $k_i <0$, for all $i=1,\dots,n$, we have $\blowupsp\simeq\ol{\Mod}_{0,n}$, $\hat{\Dc}_\mu\simeq \Dc_\mu$, and \eqref{eq:vol:n:inters:g0} is the content of \cite[Th. 1.1]{KN18}. For general $\kappa$, the fact that $\hat{\Dc}^{n-3}_\mu$ computes the volume of $\Pb\stratesp$ follows from the results of \cite{CMZ19, Ng22}. The novelty of Theorem~\ref{th:vol:n:inters:g0} is that we have an explicit expression of the divisor $\hat{\Dc}_\mu$.
\end{Remark}

\subsection{Motivations and  related works}\label{sec:motivations}
Abelian differentials and quadratic differentials on Riemann surfaces are central objects  in  Teichm\"uller theory, dynamics in moduli spaces, billiards in rational polygons, interval exchange transformations among others. For some surveys on these fields of research, we refer to \cite{MT02, Z:survey, W:survey}.
From the flat metric  point of view, $d$-differentials are natural generalizations of translation surfaces and half-translation surfaces, the objects associated to Abelian and quadratic differentials.
For $d\in\{1,2,3,4,6\}$,  $d$-differentials also arise from the counting problem of triangulations and quadrangulations on surfaces (see for instance \cite{Th98, ES18, Engel-I, KN20}).


In view of applications, it is important to have an adequate compactification of $\Pb\strate$.
The incidence variety compactification $\clpstrate$ of strata of Abelian differentials and strata of $d$-differentials were introduced  in \cite{Gen18, BCGGM1,BCGGM2} (see also~\cite{Sau20}).
The characterizing properties of points in $\clpstrate$ are given in \cite{BCGGM1} and \cite{BCGGM2}.
However, the local structure and the global description of $\clpstrate$ as algebraic varieties are largely  unknown.
To the author's knowledge, this is the first time an explicit global description of some incidence varieties is given.

In the accompanying paper \cite{Nguyen23} a recursive formula relating the volumes of strata $\pstratesp$ where none of the $k_i$ is divisible by $d$ is deduced from the results of this paper.
As an application, we partially recover a formula for the Masur-Veech volumes of strata of quadratic differentials in  genus $0$ which was conjectured by Kontsevich and proved by Athreya-Eskin-Zorich \cite{AEZ16}.
We are also hopeful that the analysis in this paper could be generalized to the investigation of incidence varieties in higher genus.


Recently, other  compactifications of strata of Abelian differentials and $d$-differentials in general that are complex orbifolds are constructed in  \cite{BCGGM:multiscale} and \cite{CMZ19}.
The objects parametrized by these compactifications are called {\em multi-scale differentials}.
By its very definition, a multi-scale differential  comes equipped with a level structure on its dual graph.
Such  level structures are not involved in the construction of the incidence variety compactification.
However, on every stable curve underlying a $d$-differential in the incidence variety, there exists implicitly some order relation between the irreducible components, and the components on which the $d$-differentials does not vanish identically correspond to the maximal elements of this relation. In other words, while a multi-scale differential compactification records information on all levels of a compatible level structure, the incidence variety compactification only records information on the top level components.
For a more detailed account on the relation between these two compactifications, we refer to \cite{CGHMS22}.


\subsection{Organization} The paper is organized as follows: in \textsection\ref{sec:embed:Mod:0:n} we introduce the bundles  $\ol{\Hc}^{(d)}_{0,n}$ and $\Pb\ol{\Hc}^{(d)}_{0,n}$ over $\ol{\Mod}_{0,n}$. The space $\Pb\stratesp$ can be seen as the image of a section $\ff$ of $\Pb\ol{\Hc}_{0,n}$ defined over $\Mod_{0,n}$. Using a specific local construction of the universal curve $\ol{\Cc}_{0,n}$, we  provide some candidates for the extension of $\ff$ to $\ol{\Mod}_{0,n}$.

In \textsection\ref{sec:blowup:M0n}, we define the ideal sheaf $\Ic$ and construct $\blowupsp$.
Using the results in \textsection\ref{sec:blowup:M0n}, in \textsection\ref{sec:bdry:blowup} we show that every irreducible component of $\partial\blowupsp$ corresponds uniquely to a partition in $\hat{\Pc}(\mu)$.
We investigate the geometric interpretation of the elements of $\blowupsp$ in \textsection\ref{sec:geom:interpret}. This allows us to define an embedding $\hat{\ff}: \blowupsp \to \Pb\ol{\Hcal}^{(d)}_{0,n}$ extending the section $\ff$ in \textsection\ref{sec:embed:Mod:0:n}, showing that $\blowupsp$ is isomorphic to the closure of $\Pb\stratesp$ in $\Pb\ol{\Hcal}^{(d)}_{0,n}$.

In \textsection\ref{sec:except:div}, we compute the vanishing order of the Cartier divisor $\Ec$ along the components of $\partial\blowupsp$ proving \eqref{eq:except:Weil:div}.
To prove \eqref{eq:tauto:ln:bdl:expr}, in \textsection\ref{sec:kaw:ln:bdl}
we investigate the Kawamata line bundle $\Kc_\mu$ over $\ol{\Cc}_{0,n}$. We   show that $\Kcal_\mu$ is the pullback of a line bundle $\bar{\Lcal}_\mu$ on $\ol{\Mod}_{0,n}$, whose divisor representative is equivalent to $D_\mu$.
The proof of Theorem~\ref{th:vol:n:inters:g0} is completed in  \textsection\ref{sec:proof:vol:n:inters:nb}.

\subsection*{Acknowledgements}
The author is grateful to Yohan Brunebarbe and Vincent Koziarz  for the enlightening and inspiring discussions, which played an important role in the realization of this work.
He thanks  Martin M\"oller and Adrien Sauvaget for the stimulating discussions.
The author is partly supported by the French ANR project ANR-19-CE40-0003.

\section{Embedding $\Mod_{0,n}$ into a projective bundle} \label{sec:embed:Mod:0:n}
\subsection{The space of finite area $d$-differentials}\label{subsec:d:diff:bundle}
Let $p: \ol{\Cc}_{0,n}\to \ol{\Mod}_{0,n}$ be the universal curve over $\ol{\Mod}_{0,n}$, and $K_{\ol{\Cc}_{0,n}/\ol{\Mod}_{0,n}}$ the relative canonical line bundle. Given $\xx \in \ol{\Mod}_{0,n}$, the fiber of $p$ over $\xx$ is denoted by $C_\xx$.
Let $\Gamma_i, \, i=1,\dots,n$, be the image of $\ol{\Mod}_{0,n}$ by the $i$-th tautological section of $p$. We abusively denote by $\Gamma_i$ the associated line bundle over $\ol{\Cc}_{0,n}$.
Consider the line bundle
$$
\Kcal^{(d)}_{0,n}:=d\cdot K_{\ol{\Ccal}_{0,n}/\ol{\Mod}_{0,n}}+\sum_{i=1}^n(d-1)\cdot\Gamma_i
$$
over $\ol{\Cc}_{0,n}$.
The restriction of $\Kc^{(d)}_{0,n}$ to $C_\xx$, denoted by $\Kc^{(d)}_{\xx}$, satisfies
\[
\Kcal^{(d)}_{\xx} \sim d\cdot\omega_{C_{\xx}}+\sum_{i=1}^n(d-1)\cdot\{x_i\}
\]
where $\omega_{C_{\xx}}$ is the dualizing sheaf of $C_{\xx}$, and $x_i=C_\xx\cap \Gamma_i, \; i=1,\dots,n$.

As usual, let $C^0_\xx,\dots,C^r_\xx$ be the irreducible components of $C_\xx$. Let $\varphi: \tilde{C}_{\xx} \ra C_{\xx}$ be the normalization map of $C_{\xx}$. For $k=0,\dots,r$, the connected component of $\tilde{C}_{\xx}$  corresponding to $C^k_{\xx}$ is denoted by $\tilde{C}^k_{\xx}$.
We abusively denote by $\{x_1,\dots,x_n\}$ the preimages in $\tilde{C}_{\xx}$  of the marked points in $C_{\xx}$.
For each $k\in \{0,\dots,r\}$, denote by $P_k$ the set $\tilde{C}^k_{\xx}\cap \{x_1,\dots,x_n\}$, and by $N_k$ the set of points in $\tilde{C}^k_{\xx}$ that are nodes of $C_{\xx}$.
One can readily check that
\[
\varphi^*(\omega_{C_{\xx}}-\Kcal^{(d)}_{\xx})_{|\tilde{C}^k_{\xx}} \sim  (1-d) \left(\omega_{\tilde{C}^k_{\xx}}+\sum_{y_j \in N_k} \{y_j\} + \sum_{x_i\in P_k}\{x_i\}\right).
\]
In particular
\[
\mathrm{deg}\varphi^*(\omega_{C_{\xx}}-\Kcal^{(d)}_{\xx})_{|\tilde{C}^k_{\xx}}=(1-d)(|N_k|+|P_k|-2) <0,
\]
which implies that $\dim H^0(\tilde{C}_{\xx},\varphi^*(\omega_{C_{\xx}}-\Kcal^{(d)}_{\xx}))=0$. Therefore
\begin{equation}\label{eq:R1:vanish}
 \dim H^1(C_{\xx},\Kcal^{(d)}_{\xx})=\dim H^0(C_{\xx},\omega_{C_{\xx}}-\Kcal^{(d)}_{\xx})=0
 \end{equation}
(see for instance \cite[Chap.10, \textsection 2]{ACG2011}).
By Riemann-Roch theorem, \eqref{eq:R1:vanish} implies that
\begin{equation}\label{eq:dim:Kd:x}
\dim H^0(C_{\xx},\Kcal^{(d)}_{\xx})=\deg(\Kcal^{(d)}_{\xx})+1=(d-1)(n-2)-1.
\end{equation}
In particular, we see that $\dim H^0(C_{\xx},\Kcal^{(d)}_{\xx})$ does not depend on $\xx$. Hence, by a classical result (see for instance \cite[Chap.III, Cor.12.9]{Hart}), $p_*\Kcal^{(d)}_{0,n}$ is a vector bundle $\ol{\Hcal}^{(d)}_{0,n}$ of rank $(d-1)(n-2)-1$ over $\ol{\Mod}_{0,n}$.
The restriction of $\ol{\Hcal}^{(d)}_{0,n}$ to $\Mod_{0,n}$ will be denoted by $\Hcal^{(d)}_{0,n}$.
Denote by  $\Pb\ol{\Hcal}^{(d)}_{0,n}$  and $\Pb\Hcal^{(d)}_{0,n}$ the  projective bundle associated with $\ol{\Hcal}^{(d)}_{0,n}$ and $\Hcal^{(d)}_{0,n}$ respectively.

It is a well known fact that every meromorphic $d$-differentials with poles of order at most $d-1$ defines a flat metric with conical singularities at the zeros and poles on the underlying Riemann surface, whose total area is finite. For this reason, we will call elements of $\Hc^{(d)}_{0,n}$ (resp. of $\Pb\Hc^{(d)}_{0,n}$)   {\em finite area}  (resp. {\em projectivized finite area}) $d$-differentials.

Every $d$-differential $q$ in the stratum $\stratesp$ is a meromorphic section of the line bundle $d\cdot\omega_{C_\xx}$ over the curve $C_\xx$ for some $\xx\in\Mod_{0,n}$. Since all the poles and zeros of this section are located at the marked points of $C_\xx$, and $k_i \geq 1-d$ for all $i=1,\dots,n$, the $d$-differential $q$ can be seen as an element of $H^0(C_\xx, d\cdot\omega_{C_\xx}+\sum_{i=1}^n(d-1)\cdot x_i) \simeq H^0(C_\xx, \Kc^{(d)}_\xx)$.  This means that $\stratesp$ is contained in the total space of the bundle $\Hc^{(d)}_{0,n}$, and therefore $\Pb\stratesp$ is a subvariety of $\Pb\Hc^{(d)}_{0,n}$.

For every $\xx \sim (\CP^1,x_1,\dots,x_n) \in \Mod_{0,n}$, the $d$-differential  $q_\xx:=\prod_{i=1}^n(z-x_i)^{k_i}(dz)^d$, where $z$ is the inhomogeneous coordinate on $\CP^1$, is an element of $\stratesp$. This $d$-differential is not well defined since it depends on the identification $C_\xx \sim (\CP^1,x_1,\dots,x_n)$. However, the line generated $q_\xx$ in $H^0(C_\xx,\Kc^{(d)}_\xx)$, which will be denoted by $[q_\xx]$, only depends on $\xx$.
We thus get a section of the bundle $\Pb\Hc^{0,n} \to \Mod_{0,n}$ given by
\[
\begin{array}{cccc}
\ff:& \Mod_{0,n} & \to & \Pb\Hc^{(d)}_{0,n}\\
    &  \xx       & \mapsto & [q_\xx]
\end{array}
\]
By definition, the Incidence Variety Compactification $\clpstratesp$ of $\Pb\stratesp$ is  the closure of $\Pb\stratesp$ in $\Pb\ol{\Hc}^{(d)}_{0,n}$.

\subsection{A local construction of the universal curve}\label{subsec:univ:curv:near:bdry}
To our purpose, we start by constructing the universal curve over a neighborhood of any point $\xx \in \ol{\Mod}_{0,n}$.
Assume that $\xx$ belongs to a stratum of codimension  $r \geq 1$  in $\partial \ol{\Mcal}_{0,n}$.
This means that $\xx$ is contained in the intersection of $r$ boundary divisors.
Let $x_1,\dots,x_n$ be the marked points on $C_\xx$.
The stable curve $(C_\xx,x_1,\dots,x_n)$ parametrized by $\xx$ has $(r+1)$ components denoted by $C^0_\xx,\dots,C^{r}_\xx$.
Let $I_j \subset \{1,\dots,n\}$ be the set of indices of the marked points contained in $C_\xx^j$. 
The topology of $C_\xx$ is encoded by its dual graph $\T_\xx$, which is a tree since $C_\xx$ has genus zero.
Each component $C^j_\xx$ of $C_\xx$ corresponds to a vertex of $\T_\xx$ which is denoted by $v_j$.
Each node of $C_\xx$  corresponds to an edge of $\T_\xx$. Since $\T_\xx$ is a tree, each of its edges is uniquely determined by an unordered pair $\{j,j'\}$, with $j,j' \in \{0,\dots,r\}$ and $j\neq j'$.
Let $\NN(\xx)$ denote the set pairs $\{j,j'\}$ associated with the nodes of $C_\xx$.
The point on $C^j_\xx$  that corresponds to a node $\{j,j'\}$ will be denoted by $y_{jj'}$.
For all $j\in \{0,\dots,r\}$, let
$$
\NN_j(\xx):=\{j'\in \{0,\dots,r\}, \; \{j,j'\} \in \NN(\xx)\}, \quad \text{ and } \quad n_j:=|I_j|+|\NN_j(\xx)|.
$$
Let $\xx_j \in \Mcal_{0,n_j}$ be the pointed curve  $(C^j_\xx,(x_i)_{i\in I_j},(y_{jj'})_{j'\in \NN_j(\xx)})$.
A neighborhood of $\xx$ in its stratum is identified with $U_0\times\dots\times U_{r}$, where $U_j$ is a neighborhood of
$\xx_j$ in $\Mcal_{0,n_j}$.
We can identify  $U_j$ with a neighborhood of $0\in \C^{n_j-3}$.

Let $u_j=(u_{j,1},\dots,u_{j,n_j-3})$ be a system  of holomorphic coordinates on $U_j$.
For all $u_j \in U_j$, the pointed curve parametrized by $u_j$ is isomorphic to $(\CP^1,(a_{ji})_{i\in I_j},(b_{jj'})_{j'\in \NN_j})$, where $a_{ji}$ is the marked point corresponding to $x_i$ and $b_{jj'}$ is the point corresponding to the node $y_{jj'}$. We consider $a_{ji}$ and $b_{jj'}$ as holomorphic functions of $u_j$.
Note that the points $\{a_{ji}(u_j), \, i\in I_j\}$ and  $\{b_{jj'}(u_j), \, j'\in \NN_j(\xx)\}$ are pairwise distinct.

Let $\Delta$ be a small disc about $0\in \C$.
The set $ \Ucal:=\Delta^{\NN(\xx)}\times U_0\times \dots \times U_r$
can be identified with an open neighborhood of $\xx$ in $\ol{\Mod}_{0,n}$.
Our goal now is to construct a family of $n$-pointed genus zero curves over $\Ucal$ degenerating to $C_\xx$.
Consider
$$
\Xcal:=\underbrace{\CP^1\times\dots\times\CP^1}_{r+1}\times \Ucal. 
$$
Let $z_{j-1}$ be the inhomogeneous coordinate on the $j$-th $\CP^1$ factor of $\Xcal$.
For any $\alpha=\{j,j'\} \in \NN(\xx)$, the coordinate on the $\alpha$-factor of $\Delta^{\NN(\xx)}$ will be denoted by $t_\alpha$.
For $j=0,\dots,r$, let $\pi_j: \Xcal \ra \CP^1$ be the projection from $\Xcal$ onto its $(j+1)$-th $\CP^1$ factor.
Let us write $z=(z_0,\dots,z_r)$, $t=(t_\alpha)_{\alpha\in \NN(\xx)}$, and $u=(u_0,\dots,u_r)$. Define
\begin{equation}\label{eq:mult:nodes:fam:curv}
\ol{\Ccal}_{\Ucal}:= \{(z,t,u) \in \Xcal, \; (z_j-b_{jj'})(z_{j'}-b_{j'j})=t_{\alpha}, \; \forall \alpha=\{j,j'\} \in \NN(\xx)\}.
\end{equation}
Let $p: \ol{\Ccal}_{\Ucal} \ra \Ucal$ be the  natural projection.
Given $(t,u)\in \Delta^{\NN(\xx)}\times U_0\times\dots\times U_r$,
let $C(t,u)$ be the curve in $(\CP^1)^{r+1}$ defined by the equations \eqref{eq:mult:nodes:fam:curv}. By construction, we have $C(0,0)\simeq C_\xx$.

Let $Z(t):=\{\alpha \in \NN(\xx), \; t_\alpha=0\}$.
The elements of $Z(t)$ are in bijection with the  nodes of $C(t,u)$.
Remove from the tree $\T_\xx$ the edges representing the elements of $Z(t)$, we get  $(r(t)+1)$ subtrees of $\T_\xx$, where $r(t):=|Z(t)|$.
Denote those subtrees by $\T^k_\xx(t), k=0,\dots,r(t)$.
Let $J_0(t),\dots,J_r(t)$ denote the partition of $\{0,1,\dots,r\}$ such that $j\in J_k(t)$ if and only if $v_j\in \T_\xx^k(t)$, where $v_0,\dots,v_r$ are the vertices of $\T_\xx$.
The following lemma provides some basic properties  of $C(t,u)$, its proof is left to the reader.
\begin{Lemma}\label{lm:univ:curv:geom:codim2}\hfill
 \begin{itemize}
 \item[(i)]  The curve $C(t,u)$ has $r(t)+1$ irreducible components, each of which is associated with a tree in the family $\{\T^0_\xx(t),\dots,\T^{r(t)}_\xx(t)\}$.
 The component associated with $\T^k_\xx(t)$ will be denoted by $C^k(t,u)$.

  \item[(ii)] Given $k \in \{0,\dots,r(t)\}$, for every $j\in J_k(t)$, the restriction of $\pi_{j}$ to $C^{k}(t,u)$ realizes an isomorphism from $C^k(t,u)$ onto $\CP^1$.

  \item[(iii)] If $j\notin J_k(t)$, then the projection $\pi_{j}$ maps $C^{k}(t,u)$ to a point that corresponds to a node of $C(t,u)$.
 \end{itemize}
\end{Lemma}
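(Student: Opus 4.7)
The plan is to analyze the equations \eqref{eq:mult:nodes:fam:curv} one node at a time, distinguishing smooth nodes ($t_\alpha \neq 0$) from singular ones ($t_\alpha = 0$), and then assemble the irreducible components of $C(t,u)$ by parallel transport along the subtrees $\T^k_\xx(t)$.

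First I would observe that for each $\alpha = \{j,j'\} \in \NN(\xx)$ with $t_\alpha \neq 0$, the equation $(z_j - b_{jj'})(z_{j'} - b_{j'j}) = t_\alpha$ cuts out in $\CP^1 \times \CP^1$ the graph of the Möbius isomorphism $\phi_\alpha : z_{j'} \mapsto b_{jj'} + t_\alpha/(z_{j'} - b_{j'j})$. When $t_\alpha = 0$, the same equation degenerates to $\{z_j = b_{jj'}\} \cup \{z_{j'} = b_{j'j}\}$, two rational curves meeting transversally at the node.

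Next I would construct the components explicitly. Fix $k$ and a vertex $j_0 \in J_k(t)$. For each $j \in J_k(t)$, the path from $v_{j_0}$ to $v_j$ in $\T^k_\xx(t)$ only crosses edges carrying nonzero $t_\alpha$, so composing the corresponding Möbius maps produces an isomorphism $\phi_{j_0,j} : \CP^1 \to \CP^1$. For each $j \notin J_k(t)$, trace the path in $\T_\xx$ from $J_k(t)$ to $v_j$; its first edge has zero $t$-coordinate, say $\alpha^* = \{\ell, \ell'\}$ with $\ell \in J_k(t)$. Since the $z_\ell$-coordinate will vary freely, the equation for $\alpha^*$ forces $z_{\ell'} = b_{\ell'\ell}$. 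Continuing along the path and exploiting stability (which guarantees that on each $\CP^1$-factor the special points $b_{\ell'',\cdot}$ are pairwise distinct), an elementary recursion assigns a unique constant value $c_j \in \CP^1$ to each $z_j$, $j \notin J_k(t)$, compatible with every equation along the way. Define $C^k(t,u)$ as the image of the morphism $\CP^1 \to (\CP^1)^{r+1}$, $z_{j_0} \mapsto (z_0, \ldots, z_r)$, with $z_j = \phi_{j_0,j}(z_{j_0})$ for $j \in J_k(t)$ and $z_j = c_j$ otherwise. Assertions (ii) and (iii) then fall out immediately: each $C^k(t,u)$ is irreducible, the restriction of $\pi_j$ to $C^k(t,u)$ equals the Möbius map $\phi_{j_0,j}$ for $j \in J_k(t)$, and it is the constant $c_j$ for $j \notin J_k(t)$; the value $c_j$ coincides by construction with the $z_j$-coordinate of a node of $C(t,u)$.

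To finish (i), I would show that the $C^k(t,u)$ exhaust $C(t,u)$ and are its irreducible components by induction on $r(t)$. If $r(t) = 0$ the variety is the graph of a composition of Möbius maps, hence a single $\CP^1$. If $r(t) \geq 1$, pick a singular edge $\alpha = \{j,j'\}$ and decompose $C(t,u) = \{z_j = b_{jj'}\} \cap C(t,u) \; \cup \; \{z_{j'} = b_{j'j}\} \cap C(t,u)$; each branch is a family over one of the two smaller trees obtained by removing $\alpha$, with strictly smaller $r(t)$. The main obstacle is this last bookkeeping step: one must verify that the two branches of the decomposition agree with the union of the $C^k(t,u)$ coming from each side of the removed edge, and that no spurious component arises at the intersection locus $\{z_j = b_{jj'},\ z_{j'} = b_{j'j}\}$, which is precisely where the two branches glue to form the node associated to $\alpha$.
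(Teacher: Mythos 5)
The paper does not actually prove this lemma (it is explicitly ``left to the reader''), and your argument is the natural one and is correct: along edges with $t_\alpha\neq 0$ the equation \eqref{eq:mult:nodes:fam:curv} is the graph of a M\"obius isomorphism, across edges with $t_\alpha=0$ the coordinates of the vertices outside $J_k(t)$ are forced to constants by the recursion you describe, and exhaustion/irreducibility follows by induction on $|Z(t)|$ via the decomposition along a zero edge. The one point you should make explicit is that the non-coincidence statements you attribute solely to stability (e.g.\ that a propagated constant never equals another attaching point $b_{\ell'\ell''}$, which is what rules out spurious or higher-dimensional pieces and gives uniqueness of $c_j$ and of the gluing points in the induction) also use the smallness of $\Delta$ and of the $U_j$'s, since after crossing an edge with $t_\alpha\neq 0$ the constant is only $O(t_\alpha)$-close to a node point $b_{\ell'\ell}$ rather than equal to it; with that remark the bookkeeping closes, and (iii) is immediate because the node of $C(t,u)$ at the first zero edge leaving $\T^k_\xx(t)$ lies on $C^k(t,u)$, on which $z_j$ is the constant $c_j$.
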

We now construct  the tautological sections  of $p$.
For every  $i \in \{1,\dots,n\}$, there is a unique $j \in \{0,\dots,r\}$ such that $i\in I_j$.
Recall that $a_{ji}: U_{j} \ra \CP^1$ is the holomorphic fuction associated to the marked point $x_i \in C^j_\xx$.
Let $k \in \{0,\dots,r(t)\}$ be such that $j\in J_k(t)$.
By Lemma~\ref{lm:univ:curv:geom:codim2}(ii) the projection $\pi_j$ restricts to an isomorphism from $C^k(t,u)$ onto $\CP^1$.
Denote by $\varphi_{ji}(t,u)$ the unique point on $C^k(t,u)$ that is mapped to $a_{ji}(u_j)$ by $\pi_j$.
Define
\begin{equation}\label{eq:tauto:sect:U:def}
 \begin{array}{cccc}
  \sigma_i: & \Ucal & \ra  & \ol{\Ccal}_{\Ucal}\\
                   &(t,u) & \mapsto & (\varphi_{ji}(t,u),t,u).
 \end{array}
\end{equation}
Then the pointed curve parametrized by $(t,u) \in \Ucal$ is isomorphic to $(C(t,u),(\sigma_i(t,u))_{1\leq i \leq n})$.
For all $i\in \{1,\dots,n\}, \ell \in\{0,\dots,r\}$, $\pi_{\ell}\circ\sigma_i(t,u)$ is the $(\ell+1)$-th coordinate of the $i$-th marked point on $C(t,u)\subset \left(\CP^1\right)^{r+1}$.
By definition, if $i \in I_j$, then $a_{ji}=\pi_j\circ \sigma_i(t,u)$. In what follows, for all  $\ell \in \{0,\dots,r\}$, we will write
$$
a_{\ell i}(t,u):=\pi_\ell\circ \sigma_i(t,u).
$$
We summarize the construction above by the following
\begin{Proposition}\label{prop:univ:curv:model}
  The set $\Uc=\Del^{\NN(\xx)}\times U_0\times\dots\times U_r$ is isomorphic to a neighborhood of $\xx$ in $\ol{\Mod}_{0,n}$, over which the universal curve is identified with the projection $p: \ol{\Cc}_{\Uc} \to \Uc$.
  The section of $p$ that corresponds to the $i$-th marked point is given by the map $\sigma_i$ defined in \eqref{eq:tauto:sect:U:def}.
\end{Proposition}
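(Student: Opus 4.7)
\medskip

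\noindent\textbf{Proof plan.} The statement is essentially the classical Deligne--Mumford--Knudsen local description of $\ol{\Mod}_{0,n}$ near a boundary point, rendered explicit in coordinates. My plan is to check three things in turn: (a) that $\ol{\Ccal}_\Uc \to \Uc$ is a flat family of $n$-pointed stable genus-$0$ curves with the announced special fiber, (b) that this family is universal at $\xx$, and (c) that the $\sigma_i$ are the tautological sections.

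\medskip

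\noindent\emph{Step 1: verifying the family.} I would first observe that the equation $(z_j-b_{jj'})(z_{j'}-b_{j'j})=t_\alpha$ is the standard local model for smoothing a node: for $t_\alpha\neq 0$ it cuts out in $\CP^1\times\CP^1$ the graph of a birational isomorphism (hence a smooth $\CP^1$), and for $t_\alpha=0$ it cuts out the union of the two axes meeting transversely at $(b_{jj'},b_{j'j})$. Intersecting these equations over $\alpha \in \NN(\xx)$ gives a reduced local complete intersection whose fiber structure is exactly that described by Lemma~\ref{lm:univ:curv:geom:codim2}. Since $\NN(\xx)$ indexes the edges of a tree, incompatibility between the local models cannot occur. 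In particular $C(0,0)\simeq C_\xx$, and the equations $(z_j-b_{jj'}(u_j))(z_{j'}-b_{j'j}(u_{j'}))=t_\alpha$ depend holomorphically on $(t,u)$, so $p: \ol{\Ccal}_\Uc \to \Uc$ is flat.

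\medskip

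\noindent\emph{Step 2: universality.} The pointed curve $(C_\xx,x_1,\dots,x_n)$ is stable with trivial automorphism group, so any versal deformation is universal. It thus suffices to check that the Kodaira--Spencer map
\[
T_{(0,0)}\Uc \longrightarrow \mathrm{Ext}^1\bigl(\Omega^1_{C_\xx}\bigl(\textstyle\sum x_i\bigr),\,\Ocal_{C_\xx}\bigr)
\]
is an isomorphism. Using the standard decomposition into node-smoothing classes and component deformation classes, $\partial/\partial t_\alpha$ maps to the local smoothing class of the node $\alpha$, while the vectors $\partial/\partial u_{j,\ell}$ map onto the deformation space of $(C^j_\xx,(x_i)_{i\in I_j},(y_{jj'})_{j'\in\NN_j(\xx)})\simeq T_{\xx_j}\Mcal_{0,n_j}$. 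A dimension count ($|\NN(\xx)|+\sum_{j}(n_j-3)=r+(n+2r)-3(r+1)=n-3$) matches $\dim\ol{\Mod}_{0,n}$, and the two types of classes span complementary pieces in the standard $\mathrm{Ext}^1$ short exact sequence, so the Kodaira--Spencer map is an isomorphism. Universality then yields the claimed isomorphism between $\Uc$ and an open neighborhood of $\xx$ in $\ol{\Mod}_{0,n}$.

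\medskip

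\noindent\emph{Step 3: tautological sections.} For each $i$, the construction of $\sigma_i$ in \eqref{eq:tauto:sect:U:def} gives a holomorphic section of $p$ whose value at $(0,0)$ is the marked point $x_i \in C^j_\xx$. Under the universal family identification from Step~2, the $i$-th tautological section is uniquely characterized by specializing to $x_i$ and varying holomorphically, so it must coincide with $\sigma_i$. Equivalently: on the smooth locus, $\sigma_i(t,u)$ is the unique point of $C(t,u)$ whose $j$-th coordinate is $a_{ji}(u_j)$, which is precisely the image of $x_i$ under the isomorphism $C^j_\xx \simeq \CP^1$ used to set up the coordinates.

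\medskip

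\noindent\emph{Main obstacle.} The only non-routine point is Step~2, namely identifying $\partial/\partial t_\alpha$ with the node-smoothing class in $\mathrm{Ext}^1$. I expect to handle this by a direct computation in the local model $(z_j-b_{jj'})(z_{j'}-b_{j'j})=t_\alpha$: restricting to a neighborhood of the node, the deformation is $xy=t_\alpha$, whose Kodaira--Spencer class at $t_\alpha=0$ is precisely the generator of the local $\mathrm{Ext}^1$ contribution coming from the node. Everything else is a bookkeeping exercise once the tree combinatorics of $\T_\xx$ are taken into account.
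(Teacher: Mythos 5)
Your proposal is correct, but it takes a different (and more detailed) route than the paper, which in fact offers no proof of Proposition~\ref{prop:univ:curv:model}: the statement is presented as a summary of the explicit construction in \textsection\ref{subsec:univ:curv:near:bdry}, with the fiberwise description (Lemma~\ref{lm:univ:curv:geom:codim2}) left to the reader, the identification of $\Uc$ with a neighborhood of $\xx$ being the classical plumbing description of $\ol{\Mod}_{0,n}$ near a boundary stratum. What you supply is the deformation-theoretic justification the paper leaves implicit: flatness of the family \eqref{eq:mult:nodes:fam:curv}, the fact that $\ol{\Mod}_{0,n}$ is a fine moduli space (stable pointed genus-$0$ curves have no automorphisms, so versal implies universal), and the Kodaira--Spencer isomorphism obtained by splitting $\mathrm{Ext}^1\bigl(\Omega^1_{C_\xx}(\sum x_i),\Ocal_{C_\xx}\bigr)$ via the local-to-global sequence into node-smoothing classes (hit by $\partial/\partial t_\alpha$) and locally trivial deformations of the pointed components (hit by $\partial/\partial u_{j,\ell}$), with the correct count $r+\sum_j(n_j-3)=n-3$. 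This buys a self-contained proof where the paper appeals to standard facts. Two points you should tighten. First, holomorphic dependence of the defining equations on $(t,u)$ does not by itself give flatness; either observe that near any point of $\ol{\Cc}_\Uc$ at most one of the equations degenerates (the nodes of a fiber are distinct, thanks to the tree structure), so the family is locally a product or the model $xy=t_\alpha$, both flat, or invoke constancy of the Hilbert polynomial of the fibers. Second, in Step~3 the identification of the tautological sections with the $\sigma_i$ of \eqref{eq:tauto:sect:U:def} follows because the classifying map to $\ol{\Mod}_{0,n}$ is induced by the family \emph{together with its sections}, i.e.\ the identification is an isomorphism of pointed families; a section of the universal curve is not characterized merely by its value over the central point, so that phrasing should be replaced by the fine-moduli argument (and one should note, as the explicit formulas in \textsection\ref{subsec:ex:sect:codim2} show, that $\sigma_i$ is indeed holomorphic on all of $\Uc$, not just on the smooth locus).
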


\subsection{Sections of the twisted relative pluricanonical line bundle over the universal curve}\label{subsec:ratio:sect:rel:can}
In what follows $\xx$ will be a point in a stratum of codimension $r$ in $\ol{\Mod}_{0,n}$. We keep using the notations of \textsection\ref{subsec:univ:curv:near:bdry}.
Recal that
\begin{equation*}
t^{\beta_j}=\prod_{\alpha \in \NN(\xx)}t_\alpha^{\beta_{j,\alpha}}, \quad \text{ where }
\beta_{j,\alpha}=\left\{ \begin{array}{ll}
                          d\mu_{\Scal_\alpha} & \text{ if } C^j_\xx \subset \hat{C}^{0}_{\xx,\alpha}, \\
                          0 & \text{ otherwise},
                         \end{array} \right.
\end{equation*}
($\hat{C}^0_{\xx,\alpha}$ is the subcurve of $C_\xx$ that contains the marked points $x_i$ with $i \in I_{0,\alpha}$). Our goal is to show

\begin{Proposition}\label{prop:Kaw:trivial:ratio}
Let $\Uc^*:=\Uc\cap \Mod_{0,n}$, that is the subset of $\Uc$ which parametrizes smooth curves of genus $0$ with $n$ marked points.
For $j=0,\dots,r$, define
\begin{equation}\label{eq:def:Phi}
\Phi_j(t,u)=\prod_{i=1}^n (z_j-a_{ji})^{k_i}(dz_j)^d.
\end{equation}
Then $t^{\beta_j}\Phi_j$ is a {\em holomorphic } section of the bundle $\Kc^{(d)}_{0,n}$ over $\ol{\Cc}_\Uc \simeq p^{-1}(\Uc) \subset \ol{\Cc}_{0,n}$, whose restriction to every fiber $C_{\xx'}$ of $p$ with $\xx'\in \Uc^*$ is a $d$-differential in $\stratesp$.
Moreover, for all $j, j'\in \{0,1,\dots,r\}$, there is a holomorphic non-vanishing function $f_{jj'}$ on $\Ucal$ such that
$$
\frac{t^{\beta_j}\Phi_j}{t^{\beta_{j'}}\Phi_{j'}}=f_{jj'}.
$$
\end{Proposition}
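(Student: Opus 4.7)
The plan is to compute the divisor of $t^{\beta_j}\Phi_j$ on $\ol{\Cc}_\Uc$ explicitly, and read off both holomorphicity and the ratio formula from the result.

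First, on the smooth locus $p^{-1}(\Uc^*)$, the projection $\pi_j$ identifies each fiber $C(t,u)$ with $\CP^1$ and $\Phi_j$ with $\prod_i(w-a_{ji})^{k_i}(dw)^d$; since $\sum_i k_i = -2d$, this $d$-differential on $\CP^1$ has divisor exactly $\sum_i k_i x_i$. In particular the restriction of $t^{\beta_j}\Phi_j$ to any smooth fiber $C_{\xx'}$ is a non-zero scalar multiple of a stratum differential and lies in $\stratesp$. Moreover $\Phi_j|_{C(t,u)}$ and $\Phi_{j'}|_{C(t,u)}$ have the same divisor on each smooth fiber, so their ratio is a meromorphic function on $\CP^1$ with trivial divisor, hence a constant depending only on $(t,u)\in\Uc^*$.

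Next I compute $\mathrm{Div}(\Phi_j)$ on $\ol{\Cc}_\Uc$. For each $\alpha=\{\ell,\ell'\}\in\NN(\xx)$ the preimage $p^{-1}\{t_\alpha=0\}$ has exactly two irreducible components $D^0_\alpha, D^1_\alpha$, where $D^\epsilon_\alpha$ is characterized by its intersection with a generic central fiber (over a base point where only $t_\alpha$ vanishes) being the $\hat{C}^\epsilon_{\xx,\alpha}$-side component; correspondingly $\mathrm{Div}(t_\alpha) = D^0_\alpha + D^1_\alpha$. At a generic point of $D^\epsilon_\alpha$ only the node $\alpha$ is active, so the local analysis reduces to the case $r=1$, where a direct computation using the nodal relation gives the algebraic identity $z_\ell - a_{\ell i} = (z_\ell - b_{\ell\ell'})(a_{\ell'i}-z_{\ell'})/(a_{\ell'i}-b_{\ell'\ell})$ whenever $x_i$ lies on the side opposite to $v_\ell$; this decomposes $\{z_\ell = a_{\ell i}\}$ as $D^1_\alpha + \sigma_i(\Uc)$. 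Combined with $dz_\ell = (z_\ell - b_{\ell\ell'})\omega$ near the node (where $\omega$ is a local generator of the relative dualizing sheaf), one finds that $\Phi_j$ has order $-d\mu_\alpha$ along $D^1_\alpha$ and $0$ along $D^0_\alpha$ if $C^j_\xx\subset\hat{C}^0_{\xx,\alpha}$, and conversely order $+d\mu_\alpha$ along $D^0_\alpha$ and $0$ along $D^1_\alpha$ if $C^j_\xx\subset\hat{C}^1_{\xx,\alpha}$. Combining with $\mathrm{Div}(t^{\beta_j})=\sum_{\alpha: C^j_\xx\subset\hat{C}^0_{\xx,\alpha}} d\mu_\alpha(D^0_\alpha+D^1_\alpha)$ yields the $j$-independent formula
\begin{equation*}
\mathrm{Div}(t^{\beta_j}\Phi_j) = \sum_{i=1}^n k_i\, \sigma_i(\Uc) + \sum_{\alpha\in\NN(\xx)} d\mu_\alpha\, D^0_\alpha.
\end{equation*}

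Since $k_i\geq 1-d$ and $d\mu_\alpha\geq 0$, the divisor $\mathrm{Div}(t^{\beta_j}\Phi_j)+(d-1)\sum_i\sigma_i(\Uc)$ is effective, so $t^{\beta_j}\Phi_j$ is a holomorphic section of $\Kc^{(d)}_{0,n}$ on $\ol{\Cc}_\Uc$. As the divisor does not depend on $j$, the ratio $t^{\beta_j}\Phi_j/t^{\beta_{j'}}\Phi_{j'}$ is a nowhere-vanishing holomorphic function on $\ol{\Cc}_\Uc$; by the first paragraph it is constant on each fiber of $p$ (extending from smooth fibers by continuity and the fact that $\CP^1$-components carry no non-constant holomorphic functions), so it descends to the required non-vanishing holomorphic function $f_{jj'}$ on $\Uc$. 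The main technical difficulty is the divisor computation in the second paragraph: the key point is that at a generic point of each irreducible boundary component $D^\epsilon_\alpha$ the local geometry reduces to the $r=1$ case (since the other $t_\beta$ are non-zero), and irreducibility of $D^\epsilon_\alpha$ then propagates the vanishing order globally, sidestepping the need to track the continued-fraction expressions for $a_{\ell i}(t,u)$ that arise along longer paths in $\T_\xx$ when $r\geq 2$.
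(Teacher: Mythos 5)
Your overall strategy differs from the paper's (which first proves the adjacent-pair ratio identity of Lemma~\ref{lm:ratio:sect:Kaw:ln:bdl:2:comp} and then checks holomorphicity of $t^{\beta_j}\Phi_j$ pointwise on $\ol{\Cc}_\Uc$), and the divisor identity you aim for, $\mathrm{Div}(t^{\beta_j}\Phi_j)=\sum_i k_i\,\sigma_i(\Uc)+\sum_{\alpha\in\NN(\xx)} d\mu_{\Sc_\alpha}D^0_\alpha$ with $D^0_\alpha,D^1_\alpha$ the two components of $p^{-1}(D_{\Sc_\alpha}\cap\Uc)$ (the paper's $F^0_{\Sc_\alpha},F^1_{\Sc_\alpha}$), is indeed true: it is exactly what later makes $t^{\beta_j}\Phi_j$ a trivializing section of the Kawamata bundle in Proposition~\ref{prop:kaw:trivial:sect}. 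The deductions you draw from it (effectivity after adding $(d-1)\sum_i\sigma_i(\Uc)$ because $k_i\geq 1-d$ and $\mu_{\Sc_\alpha}\geq 0$; equality of the divisors for all $j$ forcing $t^{\beta_j}\Phi_j/t^{\beta_{j'}}\Phi_{j'}$ to be a nowhere-vanishing holomorphic function, constant on the compact connected fibers, hence of the form $p^*f_{jj'}$) are also fine.

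The genuine gap is precisely at the step you declare to be the main difficulty and claim to sidestep: the vanishing orders of $\Phi_j$ along $D^0_\alpha$ and $D^1_\alpha$ for \emph{every} $j$. The displayed identity $z_\ell-a_{\ell i}=(z_\ell-b_{\ell\ell'})(a_{\ell' i}-z_{\ell'})/(a_{\ell' i}-b_{\ell'\ell})$ involves only the two components $C^\ell_\xx,C^{\ell'}_\xx$ adjacent to the node $\alpha$, so the local ``$r=1$'' computation yields the asserted orders only for $\Phi_\ell$ and $\Phi_{\ell'}$ (and even there you must check that $a_{\ell' i}-b_{\ell'\ell}$ is invertible when $x_i$ sits on a component of $\hat{C}^{0}_{\xx,\alpha}$ other than $C^{\ell'}_\xx$, which uses Lemma~\ref{lm:univ:curv:geom:codim2}(iii); also note the branch of $p^{-1}\{t_\alpha=0\}$ appearing in the factorization is the one \emph{opposite} to $v_\ell$, so your label $D^1_\alpha$ is only correct for one of the two placements of $C^\ell_\xx$). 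For a component $C^j_\xx$ not adjacent to $\alpha$, the section $\Phi_j$ is written in the coordinate $z_j$ and the points $a_{ji}$ of the $r$-node model; the fact that near a generic point of $D^\epsilon_\alpha$ only $t_\alpha$ vanishes does not by itself convert $\Phi_j$ into the $r=1$ model. You still need to show that $\Phi_j/\Phi_\ell$ is a unit near that generic point (for $\ell$ adjacent to $\alpha$ on the same side as $C^j_\xx$), and that is exactly a comparison across the chain of non-degenerating nodes separating $C^j_\xx$ from $C^\ell_\xx$ --- either by iterating the adjacent-pair computation (i.e.\ Lemma~\ref{lm:ratio:sect:Kaw:ln:bdl:2:comp}, localized where the intermediate $t$'s are invertible) or by invoking the invariance of $\prod_i(z-a_i)^{k_i}(dz)^d$, up to a base factor, under a simultaneous M\"obius change of $z$ and the $a_i$, which uses $\sum_i k_i=-2d$ and the non-vanishing of the intermediate $t$'s. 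This argument is short but it is the substance of the proposition; without it the order claims for non-adjacent $j$, and hence your $j$-independent divisor formula, are unproved. Once it is supplied, your divisor-theoretic packaging is a legitimate, arguably cleaner, alternative to the paper's pointwise argument.
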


We start by

\begin{Lemma}\label{lm:ratio:sect:Kaw:ln:bdl:2:comp}
For all $j,k\in \{0,\dots,r\}$, there is a  non-vanishing holomorphic function $f_{jk}$ on $\Ucal$ such that
$$
\frac{\Phi_j}{\Phi_{k}}=\frac{t^{\beta_k}}{t^{\beta_j}}f_{jk}
$$
where $\Phi_j$ and $\Phi_k$ are considered as meromorphic sections of the relative pluricanonical line bundle $K^{\otimes d}_{\ol{\Ccal}_{0,n}/\ol{\Mcal}_{0,n}}$.
\end{Lemma}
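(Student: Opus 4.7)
The plan is first to reduce to the case where $v_j$ and $v_k$ are adjacent in the dual tree $\T_\xx$. Since $\T_\xx$ is a tree, they are joined by a unique path $v_j = v_{j_0}, v_{j_1}, \dots, v_{j_m} = v_k$, and both $\Phi_j/\Phi_k$ and $t^{\beta_k}/t^{\beta_j}$ telescope along this path. A finite product of non-vanishing holomorphic functions on $\Uc$ being non-vanishing holomorphic, it suffices to treat consecutive pairs.

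For adjacent $j,k$ with $\alpha = \{j,k\}\in\NN(\xx)$, the key tool is the defining equation $(z_j - b_{jk})(z_k - b_{kj}) = t_\alpha$ of $\ol{\Cc}_\Uc$ from \eqref{eq:mult:nodes:fam:curv}. Differentiating it along the fiber gives $(dz_j)^d = (-1)^d t_\alpha^d (z_k - b_{kj})^{-2d}(dz_k)^d$, and applying it along the section $\sigma_i$ gives $(a_{ji} - b_{jk})(a_{ki} - b_{kj}) = t_\alpha$. Combining the two relations yields, for each $i \in I_\ell$:
\begin{equation*}
z_j - a_{ji} = \begin{cases}
-\dfrac{t_\alpha(z_k - a_{ki})}{(z_k - b_{kj})(a_{ki} - b_{kj})} & \text{if } v_\ell \text{ lies on the $k$-side of }\alpha,\\[4pt]
-\dfrac{(z_j - b_{jk})(a_{ji} - b_{jk})(z_k - a_{ki})}{t_\alpha} & \text{if } v_\ell \text{ lies on the $j$-side of }\alpha.
\end{cases}
\end{equation*}
Let $K_j^{(\alpha)}$ and $K_k^{(\alpha)}$ denote the sum of $k_i$ over the $j$-side and $k$-side respectively, so $K_j^{(\alpha)} + K_k^{(\alpha)} = -2d$. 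Raising each case to the power $k_i$, multiplying over $i = 1, \dots, n$, and incorporating $(dz_j)^d/(dz_k)^d$, the exponents of $(z_k - b_{kj})$ and $(z_j - b_{jk})$ both equal $K_j^{(\alpha)}$, and their product collapses via the defining equation to $t_\alpha^{K_j^{(\alpha)}}$. The net result is
\begin{equation*}
\frac{\Phi_j}{\Phi_k} = (-1)^d\, t_\alpha^{d+K_k^{(\alpha)}}\, g_{jk}(t,u),
\end{equation*}
where $g_{jk}$ is the product of $(a_{ki}-b_{kj})^{-k_i}$ over $i$ on the $k$-side and $(a_{ji}-b_{jk})^{k_i}$ over $i$ on the $j$-side.

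It remains to match the $t_\alpha$-exponent with $\beta_{k,\alpha} - \beta_{j,\alpha}$ and to verify that $g_{jk}$ is non-vanishing holomorphic. The exponent match follows by unwinding \eqref{eq:def:weight:divisor} and the convention \eqref{eq:conv:partition}: in both cases of the convention one has $\beta_{k,\alpha} - \beta_{j,\alpha} = d + K_k^{(\alpha)}$, using $K_j^{(\alpha)}+K_k^{(\alpha)} = -2d$ in the case where $v_j$ sits on the $I_{0,\alpha}$-side; and for any other $\alpha' \in \NN(\xx)\setminus\{\alpha\}$, the vertices $v_j, v_k$ lie on the same side of $\alpha'$ (removing $\alpha'$ from $\T_\xx$ keeps them connected via $\alpha$), so $\beta_{j,\alpha'} = \beta_{k,\alpha'}$ and no $t_{\alpha'}$ appears in the ratio. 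The main subtle point is the non-vanishing of $g_{jk}$: for $i \in I_\ell$ with $v_\ell$ on the $k$-side of $\alpha$, either $\ell = k$ and $a_{ki}(0,0)$ is a genuine marked point distinct from the node $b_{kj}(0,0)$, or $\ell \neq k$ and $a_{ki}(0,0) = b_{km}(0,0)$ where $m$ is the first step of the path from $v_k$ to $v_\ell$; since this path stays on the $k$-side of $\alpha$, $m \neq j$, so $b_{km}(0,0) \neq b_{kj}(0,0)$. The $j$-side case is symmetric, which gives the desired factorization $\Phi_j/\Phi_k = (t^{\beta_k}/t^{\beta_j})f_{jk}$ with $f_{jk} = (-1)^d g_{jk}$ non-vanishing holomorphic on $\Uc$.
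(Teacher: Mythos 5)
Your proposal is correct and follows essentially the same route as the paper: reduce to adjacent components along the dual tree, use the node equation $(z_j-b_{jk})(z_k-b_{kj})=t_\alpha$ and its restriction to the sections to rewrite $z_j-a_{ji}$ and $(dz_j)^d$ in terms of the $z_k$-coordinate, collect the $t_\alpha$-exponent (which matches $\beta_{k,\alpha}-\beta_{j,\alpha}=d+K_k^{(\alpha)}$, the other nodes contributing equally to $\beta_j$ and $\beta_k$), and check non-vanishing of the leftover factors via the distinctness of marked points and nodes on each component, exactly as in the paper's proof. The only difference is organizational (you split the coordinate identity into the two side-cases up front and treat both orientations of the convention explicitly, where the paper assumes one WLOG), which does not change the argument.
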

\begin{proof}
Since $C_\xx$ is connected it is enough to show the lemma in the case there is a node $\alpha$ between the  components $C^j_\xx$ and $C^{k}_\xx$.
Without loss of generality, we can assume that $C^j_\xx \subset \hat{C}^1_{\xx,\alpha}$ and $C^k_\xx \subset \hat{C}^0_{\xx,\alpha}$.
In what follows, to lighten the notations, the subscript $\alpha$ will be omitted.
The relations
$$
z_j=b_{jk}+\frac{t}{z_k-b_{kj}} \quad  \text{ and }  \quad a_{ji}=b_{jk}+\frac{t}{a_{ki}-b_{kj}}
$$
imply
\begin{equation}\label{eq:rel:coord:through:node}
z_j-a_{ji}=-t\frac{z_k-a_{ki}}{(z_k-b_{kj})(a_{ki}-b_{kj})}.
\end{equation}
Since $(z_j-b_{jk})(z_k-b_{kj})=t$ (see \eqref{eq:mult:nodes:fam:curv}),  we have $dz_j=\frac{-t}{(z_k-b_{kj})^2}dz_k$ as sections of $K_{\ol{\Ccal}_{\Ucal}/\Ucal}$. Hence
\begin{eqnarray*}
\Phi_j & = & \frac{(-t)^{\sum_{i=1}^n k_i} \prod_{i=1}^n(z_k-a_{ki})^{k_i}}{ (z_k-b_{kj})^{\sum_{i=1}^n k_i}\prod_{i=1}^n(a_{ki}-b_{kj})^{k_i}} \cdot\frac{(-t)^d}{(z_k-b_{kj})^{2d}}(dz_k)^d\\
       & = & \frac{(-1)^d t^{-d}}{\prod_{i=1}^n(a_{ki}-b_{kj})^{k_i}}\Phi_k
\end{eqnarray*}
(here we used the equality $\sum_{i=1}^n k_i=-2d$). Recall that $a_{ki}$ is the $(k+1)$-th coordinate of the marked point $\sigma_i(t,u) \in C(t,u)$. We have two possibilities
\begin{itemize}
\item $i \in I_{0}$, that is $x_i \in \hat{C}^0_{\xx}$:  if $x_i \in C^k_\xx$, then since the marked point $x_i$ and the point corresponding to the node $\alpha$ on $C^k_\xx$ are distinct, $(a_{ki}-b_{kj})$ is a holomorphic non-vanishing function on $U_k$ (hence on $\Uc$). Otherwise, $x_i$ is contained in another component $C^{k'}_\xx$ of $\hat{C}^0_{\xx}$. From Lemma~\ref{lm:univ:curv:geom:codim2}(iii), we know that the projection $\pi_k$ maps $\sigma_i(t,u)$ to a point close to a node $b_{kj'}$ which must be different from $b_{kj}$.  Since these two nodes are distinct, it follows that $(a_{ki}-b_{kj})$ is also a  holomorphic non-vanishing  function on $\Ucal$.

 \item $i\in I_{1}$, that is $x_i \in \hat{C}^1_{\xx}$: let us write $a_{ki}-b_{kj}=t/(a_{ji}-b_{jk})$. By the same argument as above, we conclude that $(a_{ji}-b_{jk})$ is a  holomorphic non-vanishing  function on $\Ucal$.
\end{itemize}
We then have
$$
\frac{\Phi_j}{\Phi_k} =   \frac{(-1)^d t^{-d}}{\prod_{i=1}^n(a_{ki}-b_{kj})^{k_i}}
                      =   (-1)^d t^{-d-\sum_{i\in I_{1}} k_i}\cdot\frac{\prod_{i\in I_{1}}(a_{ji}-b_{jk})^{k_i}}{\prod_{i\in I_{0}}(a_{ki}-b_{kj})^{k_i}}
                      =  (-1)^dt^{d\mu_\Sc} \cdot \frac{\prod_{i\in I_{1}}(a_{ji}-b_{jk})^{k_i}}{\prod_{i\in I_{0}}(a_{ki}-b_{kj})^{k_i}}.
$$
It remains to show that $t^{d\mu_\Sc}=t^{\beta_k}/t^{\beta_j}$. Indeed,  all the nodes $\alpha' \in \NN(\xx)$ different from $\alpha$ do not separate the components $C^j_\xx$ and $C^k_\xx$. Thus we have $\beta_{j,\alpha'}=\beta_{k,\alpha'}$. By definition, $\beta_{k,\alpha}=d\mu_\Sc$, while $\beta_{j,\alpha}=0$, and the claim follows.
As a consequence, we get
$$
\frac{t^{\beta_j}\Phi_j}{t^{\beta_k}\Phi_k}=(-1)^d\frac{\prod_{i\in I_{1}}(a_{ji}-b_{jk})^{k_i}}{\prod_{i\in I_0}(a_{ki}-b_{kj})^{k_i}}=:f_{jk}$$
is a holomorphic non-vanishing function on $\Ucal$.
\end{proof}

\subsection*{Proof of Proposition~\ref{prop:Kaw:trivial:ratio}}
\begin{proof}
Since $dz_j$ is a meromorphic section of the relative canonical line bundle  $K_{\ol{\Cc}_\Uc/\Uc}$, $\Phi_j$ is a  meromorphic section of $\Kc^{(d)}_{0,n|\ol{\Cc}_\Uc}$. It is clear that for all $\xx' \in \Uc^*$, the restriction of $\Phi_j$ to $C_{\xx'}$ is a $d$-differential in $\stratesp$. By definition, $t^{\beta_j}$ does not vanish on $\Uc^*$. Thus the same is true for $t^{\beta_j}\Phi_j$.

We now show that $t^{\beta_j}\Phi_j$ gives a holomorphic section of $\Kc^{(d)}_{0,n|\ol{\Cc}_\Uc}$ for all $j=0,\dots,r$.
Consider a point $(z^0,t^0,u^0) \in \ol{\Cc}_\Uc$.
By definition, $z^0\in (\CP^1)^{r+1}$ is a point in the curve $C(t^0,u^0)$.
By Lemma~\ref{lm:ratio:sect:Kaw:ln:bdl:2:comp},  for all $j,j'\in \{0,\dots,r\}$,  $\displaystyle f_{jj'}:=\frac{t^{\beta_j}\Phi_j}{t^{\beta_{j'}}\Phi_{j'}}$ is a non-vanishing holomorphic function  on $\Uc$.
Therefore, it is enough to show  that there is some $j \in \{0,\dots,r\}$ such that $t^{\beta_j}\Phi_j$ gives a holomorphic section of $\Kc^{(d)}_{0,n}$ near  $(z^0,t^0,u^0)$.
We have two cases:
\begin{itemize}
\item[$\bullet$] $z^0$ a smooth point of $C(t^0,u^0)$. In this case there is a unique component $C^k(t^0,u^0)$ of $C(t^0,u^0)$ such that $z^0 \in C^k(t^0,u^0)$. There is some $j \in \{0,\dots,r\}$ such that the restriction of $\pi_j$ to the component $C^k(t^0,u^0)$ is an isomorphism onto $\CP^1$.
    We claim that for this choice of $j$, $\Phi_j$ (and therefore $t^{\beta_j}\Phi_j$)is holomorphic section of $\Kc^{(d)}_{0,n}$ in a neighborhood of $(z^0,t^0,u^0)$.
Indeed,  in a neighborhood of $(z^0,t^0,u^0)$, $dz_j$ gives a trivializing section of $K_{\ol{\Cc}_\Uc/\Uc}$ , and a trivializing section of $(d-1)\cdot\Gamma_1+\dots+(d-1)\cdot\Gamma_n$ is given by $\prod_{i=1}^n(z_j-a_{ji})^{1-d}$. Since we have $k_i \geq 1-d$, the claim follows. \\

\item[$\bullet$] $z_0$ is a node of $C(t^0,z^0)$ which is the intersection of two components $C^{k}(t^0,u^0)$ and $C^{k'}(t^0,u^0)$.
Let $\alpha=\{j,j'\} \in \NN(\xx)$ be the corresponding node in $C_\xx$. We have $t^0_\alpha=0$, and $(z^0_j,z^0_{j'})=(b_{jj'}(u^0),b_{j'j}(u^0))$.
We can assume that $\pi_{j|C^k(t^0,u^0)}: C^k(t^0,u^0) \to \CP^1$ and $\pi_{j'|C^{k'}(t^0,u^0)}: C^{k'}(t^0,u^0) \to \CP^1$ are isomorphisms.
Note that $\pi_j(C^{k'}(t^0,u^0))=\{b_{jj'}(u^0)\}$ and $\pi_{j'}(C^k(t^0,u^0))=\{b_{j'j}(u^0)\}$.
We can also assume that $C^{j'}_\xx$ is contained in the subcurve $\hat{C}^0_{\xx,\alpha}$ of $C_\xx$, which means that $\beta_{j,\alpha}=0$ and $\beta_{j',\alpha}=d+\sum_{i\in I_{0,\alpha}} k_i \geq 0$.

We claim that $\Phi_j$ gives a holomorphic section of $\Kc^{(d)}_{0,n}$ near $(z^0,t^0,u^0)$.
For all $i \in I_{1,\alpha}$, we have $a_{ji}(t^0,u^0) \neq b_{jj'}(u^0)$. Thus $z_j-a_{ji}$ is an invertible function near $(z^0,t^0,u^0)$.
However for all $i \in I_{0,\alpha}$, we have $a_{ji}(t^0,u^0)=b_{jj'}(u^0)=z^0_j$ (see Lemma~\ref{lm:univ:curv:geom:codim2} (iii)). In particular $z_j-a_{ji}$ is not invertible near $(z^0,t^0,u^0)$.
Using \eqref{eq:rel:coord:through:node}, we can write
$$
z_j-a_{ji}=-t_\alpha\frac{z_{j'}-a_{j'i}}{(z_{j'}-b_{j'j})(a_{j'i}-b_{j'j})}=-(z_j-b_{jj'})\cdot\frac{z_{j'}-a_{j'i}}{a_{j'i}-b_{j'j}}
$$
where the functions $(z_{j'}-a_{j'i})$ and $(a_{j'i}-b_{j'j})$ are invertible near $(z^0,t^0,u^0)$. It follows that
\begin{equation}\label{eq:phi:near:node}
\Phi_j=(-1)^{\sum_{i\in I_{0,\alpha}}k_i}(z_j-b_{jj'})^{d+\sum_{i\in I_{0,\alpha}}k_i}\cdot \prod_{i\in I_{1,\alpha}}(z_j-a_{ji})^{k_i} \cdot \prod_{i\in I_{0,\alpha}}\left(\frac{z_{j'}-a_{j'i}}{a_{j'i}-b_{j'j}}\right)^{k_i} \cdot \left( \frac{dz_j}{z_j-b_{jj'}} \right)^d.
\end{equation}
Now, since $\frac{dz_j}{z_j-b_{jj'}}$ gives a trivializing section of $K_{\ol{\Ccal}_\Uc/\Ucal}$ near the point $(z^0,t^0,u^0)$, and $d+\sum_{i\in I_{0,\alpha}} \geq 0$, the claim follows. This concludes the proof of the proposition.
\end{itemize}
\end{proof}
\subsection{Example}\label{subsec:ex:sect:codim2}
To close this section, we give the explicit formulas for the sections $t^{\beta_j}\Phi_j$'s near a point $\xx$ in a stratum of codimension $2$ of $\ol{\Mod}_{0,n}$.
In this case, $C_\xx$ is a stable curve with $3$ irreducible components denoted by $C^0_\xx,C^1_\xx,C^2_\xx$ with $C_\xx^0$ intersecting both $C^1_\xx$ and $C^2_\xx$.
Let $\alpha_j$ denote the node between $C^0_\xx$ and $C^j_\xx, \; j=1,2$.
Let $\{I_0,I_1,I_2\}$ be the partition of $\{1,\dots,n\}$ where $I_j$ records the marked points in the component $C^j_\xx$.
Set $n_j=|I_j|, \; j=0,1,2$.
We can assume that $I_0=\{1,\dots,n_0\}, I_1=\{n_0+1,\dots,n_0+n_1\}$, and $I_2=\{n_0+n_1+1,\dots,n\}$.
Since $C_\xx$ is stable, we must have $n_0\geq 1, n_1\geq 2$, and $n_2\geq 2$.

Recall that each component of $C_\xx$ is isomorphic to $\CP^1$.
Let $x,y,z$ be the inhomogeneous coordinates on $C^0_\xx, C^1_\xx,$ and $C^2_\xx$ respectively.
Let $a_i$ be the (inhomogeneous) coordinate of the $i$-th marked point of $C_\xx$ in $C^0_\xx$ for $i=1,\dots,n_0$, $b_i$ the coordinate of the $i$-th marked point in $C^1_\xx$ for $i=n_0+1,\dots,n_0+n_1$, and $c_i$ the coordinate of the $i$-marked point in $C^2_\xx$ for $i=n_0+n_1+1,\dots,n$.
Using the automorphisms of $\CP^1$, we can assume that
\begin{itemize}
\item In $C^0_\xx$: $0$ corresponds to the node $\alpha_1$, $1$ corresponds to the node $\alpha_2$, and $a_{n_0}=\infty$.
\item In $C^1_\xx$: $0$ corresponds to the node $\alpha_1$, $b_{n_0+n_1-1}=1, b_{n_0+n_1}=\infty$.
\item In $C^2_\xx$: $0$ corresponds to the node $\alpha_2$, $c_{n-1}=1, c_n=\infty$.
\end{itemize}
Let $U_0$ be an open neighborhood of $(a_i)_{1\leq i \leq n_0-1}$ in $\C^{n_0-1}$, $U_1$ an open neighborhood of $(b_{n_0+i})_{1\leq i \leq n_1-2}$ in $\C^{n_1-2}$, and $U_2$ an open neighborhood of $(c_{n_0+n_1+i})_{1\leq i \leq n_2-2}$ in $\C^{n_2-2}$.

Let $\Delta\subset \C$ be a small disk about $0$. A neighborhood of $\xx$ in $\ol{\Mod}_{0,n}$ can be identified with $\Ucal=\Delta^2\times U_0\times U_1\times U_2$.
Let $(t_1,t_2)$ be the coordinates on $\Delta^2$, where $t_i$ corresponds to the  node $\alpha_i$.
For all $(t_1,t_2) \in \Delta^2$, define
$$
C_{(t_1,t_2)}:=\{(x,y,z) \in \left(\CP^1\right)^3, \; xy=t_1, \; (x-1)z=t_2\} \subset \left( \CP^1\right)^3.
$$
The universal curve over $\Ucal$ is isomorphic to
$$
\ol{\Ccal}_{\Ucal}:=\{(x,y,z,t_1,t_2,X,Y,Z) \in \left(\CP^1\right)^3\times \Delta^2\times U_0\times U_1 \times U_2, \; xy=t_1, (x-1)z=t_2\}
$$
where $X=(x_i)_{1\leq i\leq n_0-1} \in U_0, Y=(y_i)_{n_0+1\leq i \leq n_0+n_1-2} \in U_1$, and $Z=(z_i)_{n_0+n_1+1\leq i \leq n-2} \in U_2$.
We set
$$
x_{n_0}=\infty, \ y_{n_0+n_1-1}=1, \ y_{n_0+n_1}=\infty, \ z_{n-1}=1, z_n=\infty,
$$
and define
\begin{eqnarray*}
\sigma_i(t_1,t_2,X,Y,Z) &:=& (x_i, \frac{t_1}{x_i}, \frac{t_2}{x_i-1}) \in C_{(t_1,t_2)}, \, i=1\dots,n_0,\\
\sigma_i(t_1,t_2,X,Y,Z) &:=& (\frac{t_1}{y_i},y_i,\frac{t_2y_i}{t_1-y_i}) \in C_{(t_1,t_2)},\, i=n_0+1,\dots,n_0+n_1,\\
\sigma_i(t_1,t_2,X,Y,Z) &:=& (\frac{t_2+z_i}{z_i},\frac{t_1z_i}{t_2+z_i},z_i) \in C_{(t_1,t_2)}, \, i=n_0+n_1+1,\dots,n.
\end{eqnarray*}
The pointed curve represented by $(t_1,t_2,X,Y,Z)$ is isomorphic to $(C_{(t_1,t_2)},(\sigma_i(t_1,t_2,X,Y,Z))_{1\leq i\leq n})$.
By construction, we have
\begin{eqnarray*}
 \Phi_0 & = & \prod_{i\in I_0}(x-x_i)^{k_i} \prod_{i\in I_1}(x-\frac{t_1}{y_i})^{k_i}\prod_{i\in I_2}(x-\frac{t_2+z_i}{z_i})^{k_i}(dx)^d,\\
 \Phi_1 & = & \prod_{i\in I_0}(y-\frac{t_1}{x_i})^{k_i} \prod_{i\in I_1}(y-y_i)^{k_i} \prod_{i \in I_2}(y-\frac{t_1z_i}{t_2+z_i})^{k_i}(dy)^d,\\
 \Phi_2 & = & \prod_{i\in I_0}(z-\frac{t_2}{x_i-1})^{k_i} \prod_{i\in I_1}(z-\frac{t_2y_i}{t_1-y_i})^{k_i} \prod_{i \in I_2}(z-z_i)^{k_i}(dz)^d.
\end{eqnarray*}
Set $\hat{k}_j:=\sum_{i\in I_j} k_i, \; j=0,1,2$, and $\nu_1:=-d-\hat{k}_1 = d+\hat{k}_0+\hat{k}_2, \; \nu_2:=-d-\hat{k}_2 = d+\hat{k}_0+\hat{k}_1$.
Using the relations $y=\frac{t_1}{x}$, and $z=\frac{t_2}{x-1}$, one can readily check that
\begin{eqnarray*}
\frac{\Phi_1}{\Phi_0} & = & (-1)^d\frac{\prod_{i\in I_1}y_i^{k_i}}{\prod_{i\in I_0}x_i^{k_i}}\cdot\prod_{i\in I_2}\left(\frac{z_i}{t_2+z_i}\right)^{k_i} t_1^{\nu_1},\\
\frac{\Phi_2}{\Phi_0}& = & (-1)^d \prod_{i\in I_1}\left(\frac{y_i}{t_1-y_i}\right)^{k_i}\frac{\prod_{i\in I_2}z_i^{k_i}}{\prod_{i\in I_0}(x_i-1)^{k_i}} t_2^{\nu_2}.
\end{eqnarray*}
We have the following possibilities:
\begin{itemize}
 \item[(a)] $\nu_1 \leq 0$ and  $\nu_2 \leq 0$: in this case $t^{\beta_0}=1, t^{\beta_1}=t_1^{-\nu_1}, t^{\beta_2}=t_2^{-\nu_2}$,
$$
\frac{t^{\beta_1}\Phi_1}{\Phi_0}  =  (-1)^d\frac{\prod_{i\in I_1}y_i^{k_i}}{\prod_{i\in I_0}x_i^{k_i}}\cdot\prod_{i\in I_2}\left(\frac{z_i}{t_2+z_i}\right)^{k_i}, \quad \frac{t^{\beta_2}\Phi_2}{\Phi_0} =  (-1)^d \prod_{i\in I_1}\left(\frac{y_i}{t_1-y_i}\right)^{k_i}\frac{\prod_{i\in I_2}z_i^{k_i}}{\prod_{i\in I_0}(x_i-1)^{k_i}}
$$
are non-vanishing holomorphic functions on $\Uc$, and $\Phi_0 \sim t_1^{-\nu_1}\Phi_1 \sim t_2^{-\nu_2}\Phi_2$ are holomorphic sections of $\Kc^{(d)}_{0,n}$ over $\ol{\Cc}_\Uc$.
Note that the restriction of $\Phi_0$ to $C^0_\xx$ gives a non-zero $d$-differential, while its restrictions to $C^1_\xx$ and $C^2_\xx$ vanish identically. This means that $\Phi_0$ induces a section of the bundle $\Pb\Hc^{0,n}$ over $\Uc$.

 \item[(b)] $\nu_1\cdot\nu_2\leq 0$: without loss of generality, let us assume that $\nu_1\geq 0$ and $\nu_2 \leq 0$: in this case $\beta_0=(\nu_1,0),\beta_1=(0,0),\beta_2=(\nu_1,-\nu_2)$, and  $t_1^{\nu_1}\Phi_0 \sim \Phi_1 \sim t_1^{\nu_1}t_2^{-\nu_2}\Phi_2$ are holomorphic sections of $\Kc^{(d)}_{0,n}$ over $\ol{\Cc}_\Uc$. Since the restriction of $\Phi_1$ to $C^1_\xx$ is non-zero, $\Phi_1$ induces a section of the bundle $\Pb\Hc^{(d)}_{0,n}$ over $\Uc$.

\item[(c)] $\nu_1\geq 0$ and $\nu_2 \geq 0$: in this case, $\beta_{0}=(\nu_1,\nu_2)$, $\beta_1=(0,\nu_2)$, $\beta_2=(\nu_1,0)$, and $t_1^{\nu_1}t_2^{\nu_2}\Phi_0 \sim t_2^{\nu_2}\Phi_1 \sim t_1^{\nu_1}\Phi_2$ are all holomorphic sections of $\Kc^{(d)}_{0,n}$ on $\ol{\Cc}_\Uc$.  However, those sections vanish identically on all of the components of the central fiber $C_\xx$. Therefore, we do not get a section of $\Pb\Hc^{(d)}_{0,n}$ over $\Uc$.
\end{itemize}
\section{Blow-up of $\ol{\Mod}_{0,n}$}\label{sec:blowup:M0n}
We have seen in \textsection\ref{subsec:ex:sect:codim2} that the section $\ff: \Mod_{0,n} \to \Pb\Hc^{(d)}_{0,n}$ does not extend to $\ol{\Mod}_{0,n}$ in general. The points at which the natural extension fails are exactly where the all the monomials $t^{\beta_j}$ vanish. This prompts us to consider the blow up of  $\ol{\Mod}_{0,n}$ along the ideal generated by the $t^\beta_j$'s.

\subsection{Construction}\label{subsec:blowup:construct}
Let $\xx$ be a point in a stratum of codimension $r$ in $\ol{\Mod}_{0,n}$ as in \textsection\ref{subsec:univ:curv:near:bdry}.
Recall that $\NN(\xx)$ is the set of nodes of $C_\xx$.
By definition,  $\xx$ is contained in the intersection $\cap_{\alpha\in \NN(\xx)} D_{\Sc_\alpha}$, where $\Sc_\alpha$ is the partition corresponding to the node $\alpha$. Let $\Ucal_\xx$ be a neighborhood of $\xx$ that satisfies the following
\begin{itemize}
 \item[(i)] $\Ucal_\xx$ does not intersect any boundary divisor $D_\Scal\notin \{D_{\Sc_\alpha}, \, \alpha \in \NN(\xx)\}$,

 \item[(ii)] $\Ucal_\xx$ can be identified with an open subset of $\C^{n-3}$ such that, for each $\alpha \in \NN(\xx)$, there is a coordinate function $t_\alpha$ such that $D_{\Sc_\alpha}\cap \Ucal_\xx$ is defined by the equation $t_\alpha=0$.
\end{itemize}
We can actually choose $\Ucal_\xx$ to be an open affine of $\ol{\Mod}_{0,n}$ such that $t_\alpha$ are elements of the coordinate ring of $\Ucal_\xx$.
To simplify the notations, throughout this section, we will write $D_\alpha$ instead of $D_{\Sc_\alpha}$, and $\mu_\alpha$ instead of $\mu_{\Sc_\alpha}$.
Let  $\Ocal_{\ol{\Mod}_{0,n}}$ be the structure sheaf of $\ol{\Mod}_{0,n}$, and $\Ical_{\Ucal_\xx}$ be the  sheaf of ideals of $\Ocal_{\ol{\Mod}_{0,n}|\Ucal_\xx}$ generated by $\{t^{\beta_0},\dots,t^{\beta_r}\}$.
\begin{Theorem}\label{th:ideal:sheaf:def}
 The sheaves of ideals $\{\Ical_{\Ucal_\xx},\xx \in \ol{\Mod}_{0,n}\}$ patch together and give rise to an ideal  sheaf $\Ical$  of $\Ocal_{\ol{\Mod}_{0,n}}$ whose support is contained in $\partial \ol{\Mod}_{0,n}$.
\end{Theorem}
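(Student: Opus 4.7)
The plan is to verify two things: (a) on the open set $\Uc_\xx$ the ideal $\Ical_{\Uc_\xx}$ depends only on $\xx$, not on the auxiliary choice of defining equations $t_\alpha$ for the divisors $D_{\Sc_\alpha}$; and (b) for any $\yy \in \Uc_\xx \cap \Uc_{\xx'}$, the stalks $(\Ical_{\Uc_\xx})_\yy$ and $(\Ical_{\Uc_{\xx'}})_\yy$ coincide. Together these produce a well-defined global ideal sheaf. Point (a) is immediate: any two local equations for the reduced divisor $D_{\Sc_\alpha}$ differ by a unit, so each generator $t^{\beta_j}$ is intrinsic up to a unit, and the ideal it generates is unchanged.

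For (b), it suffices to prove a stronger local statement: for every $\yy \in \Uc_\xx$, the stalk $(\Ical_{\Uc_\xx})_\yy$ coincides with the stalk $(\Ical_{\Uc_\yy})_\yy$ of the ``intrinsic'' ideal built from the data of $\yy$ alone. Introduce $Z(\yy) := \{\alpha \in \NN(\xx) \, : \, t_\alpha(\yy) = 0\}$; this is precisely the set of nodes of $C_\xx$ which persist in $C_\yy$, so it may be identified with $\NN(\yy)$. Removing the edges $Z(\yy)$ from the dual tree $\T_\xx$ splits it into $|Z(\yy)|+1$ subtrees; let $J_0(\yy),\dots,J_{r'}(\yy)$ denote the resulting partition of $\{0,\dots,r\}$. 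The irreducible components of $C_\yy$ are indexed by $k \in \{0,\dots,r'\}$, with $C^k_\yy$ obtained by gluing the $C^j_\xx$ for $j\in J_k(\yy)$ along the smoothed nodes.

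The core combinatorial observation is the following: for every $\alpha \in Z(\yy)$, the edge $\alpha$ is one of the removed edges, hence the subtree $J_k(\yy)$ lies entirely on one side of $\alpha$, which is precisely the ``$I_{0,\alpha}$ side'' or the ``$I_{1,\alpha}$ side'' in the sense of \eqref{eq:conv:partition}. Thus $\beta_{j,\alpha}$ takes the same value for all $j \in J_k(\yy)$; denoting this common value $\beta'_{k,\alpha}$, the vector $\beta'_k \in \Z_{\geq 0}^{\NN(\yy)}$ is exactly the weight vector associated to $C^k_\yy$ by \eqref{eq:def:beta} at the point $\yy$. For $\alpha \in \NN(\xx)\smallsetminus Z(\yy)$, the function $t_\alpha$ is invertible at $\yy$, so the factor $t_\alpha^{\beta_{j,\alpha}}$ contributes only a unit in the local ring $\Oc_{\ol{\Mod}_{0,n},\yy}$. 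Consequently, for each $j$ with $j \in J_{k(j)}(\yy)$,
$$
t^{\beta_j} \;=\; (\text{unit at } \yy)\cdot t^{\beta'_{k(j)}},
$$
and conversely every $t^{\beta'_k}$ arises in this way from any $j \in J_k(\yy)$. Hence the two ideals agree in the stalk at $\yy$, establishing (b). When $\yy \in \Mod_{0,n}$ one has $\NN(\yy)=\vide$, $r'=0$, and the empty product gives $t^{\beta'_0}=1$, so the stalk is the full local ring; therefore $\Ical$ is supported in $\partial\ol{\Mod}_{0,n}$.

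The only genuinely delicate step is the combinatorial identification $\beta_{j,\alpha}=\beta'_{k,\alpha}$ for $j\in J_k(\yy)$ and $\alpha \in Z(\yy)$, but this reduces to the elementary tree-theoretic fact that removing the single edge $\alpha$ from $\T_\xx$ separates the vertices exactly according to the partition $\Sc_\alpha$ of the marked points, so any subtree of $\T_\xx\smallsetminus Z(\yy)$ lies wholly on one side. Everything else is bookkeeping with units in local rings, and the patching from stalks to sheaves is standard.
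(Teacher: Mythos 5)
Your proof is correct and follows essentially the same route as the paper: the key point in both is that for a point $\yy\in\Ucal_\xx$ (the paper's $\xx'$ or $\xx''$), each generator $t^{\beta_j}$ differs by a unit near $\yy$ from the generator attached to the component of $C_\yy$ containing $C^j_\xx$, because the exponents agree at the surviving nodes and the remaining $t_\alpha$ are invertible there. Your stalk-by-stalk comparison with the intrinsic ideal at $\yy$ is just a repackaging of the paper's two-step argument (first $\xx'\in\Ucal_\xx$, then the general case via a third point of the overlap), and your treatment of the support is the same.
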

\begin{proof}
We need to show that $\Ical_{\Ucal_\xx|\Ucal_\xx\cap\Ucal_{\xx'}} \simeq \Ical_{\Ucal_{\xx'}|\Ucal_\xx\cap\Ucal_{\xx'}}$ for all $\xx,\xx' \in \ol{\Mod}_{0,n}$.
We first consider the case $\xx' \in \Ucal_\xx$. In this case, all the boundary divisors that contain $\xx'$ contain also $\xx$. Thus $\NN(\xx')$ is a subset of $\NN(\xx)$.
Let $s=|\NN(\xx')|$. The curve $C_{\xx'}$ has $(s+1)$ irreducible components, which will be denoted by $C^0_{\xx'},\dots,C^s_{\xx'}$.
For all $\alpha\in \NN(\xx')$, let $t'_\alpha$ be a function in the affine coordinate ring of $\Ucal_{\xx'}$ that defines  $D_\alpha\cap\Ucal_{\xx'}$.
Let $t'=(t'_\alpha)_{\alpha\in \NN(\xx')}$.
For $j'=0,\dots,s$, define the exponent vectors ${\beta'_{j'}}=(\beta'_{j',\alpha})_{\alpha\in \NN(\xx')}$, and the monomial ${t'}^{\beta'_{j'}}$  in the same way as $\beta_j$ and $t^{\beta_j}$.
By a slight abuse of notation, we consider $\beta'_j$ as a vector in $\Z_{\geq 0}^{\NN(\xx)}$ by setting $\beta'_{j,\alpha}=0$ for all $\alpha \in \NN(\xx)\setminus \NN(\xx')$.

By definition $\Ical_{\Ucal_{\xx'}}$ is generated by $({t'}^{\beta'_0},\dots,{t'}^{\beta'_s})$.
Since the restrictions of $t'_\alpha$ and $t_\alpha$ to $\Ucal_\xx\cap\Ucal_{\xx'}$ define the same divisor, $t'_\alpha/t_\alpha$ is an invertible regular function on $\Ucal_\xx\cap \Ucal_{\xx'}$.
It follows that $\Ical_{\Ucal_{\xx'}|\Ucal_\xx\cap\Ucal_{\xx'}}=(t^{\beta'_0},\dots,t^{\beta'_s})$.

The curve $C_{\xx'}$ can be obtained from $C_\xx$ by smoothening the nodes in $\NN(\xx)\setminus \NN(\xx')$. Therefore, there is a continuous map $\varphi: C_{\xx'} \ra C_\xx$ that sends each component of $C_{\xx'}$ onto a subcurve  of $C_\xx$.
Consider an irreducible component $C^i_\xx$ of $C_\xx$.
There is an irreducible component $C^j_{\xx'}$ such that $C^i_\xx\subset \varphi(C^j_{\xx'})$.
By construction, for all $\alpha\in \NN(\xx')$, we have $\beta_{i,\alpha}=\beta'_{j,\alpha}$.
For all $\alpha \in \NN(\xx)\setminus N(\xx')$, $t_\alpha$ is an invertible regular function on $\Ucal_\xx\cap\Ucal_{\xx'}$ (because $\xx'$ is not contained in the divisor $D_\alpha$).
Hence $t^{\beta_i}/t^{\beta'_j}$ is an invertible regular function on $\Ucal_\xx\cap\Ucal_{\xx'}$. It follows that
$$
\Ical_{\Ucal_{\xx'}|\Ucal_\xx\cap\Ucal_{\xx'}}\simeq (t^{\beta'_0},\dots,t^{\beta'_s})\simeq (t^{\beta_0},\dots,t^{\beta_r})\simeq \Ical_{\Ucal_{\xx}|\Ucal_\xx\cap\Ucal_{\xx'}}.
$$

Consider now the general case. For every $\xx'' \in \Ucal_{\xx}\cap\Ucal_{\xx'}$, we consider the open affine $\Ucal_{\xx''}$ of $\xx''$ which satisfies (i) and (ii).
The above arguments show that we have
$$
\Ical_{\Ucal_\xx|\Ucal_\xx\cap\Ucal_{\xx'}\cap\Ucal_{\xx''}} \simeq \Ical_{\Ucal_{\xx''}|\Ucal_\xx\cap\Ucal_{\xx'}\cap\Ucal_{\xx''}}\simeq \Ical_{\Ucal_{\xx'}|\Ucal_\xx\cap\Ucal_{\xx'}\cap\Ucal_{\xx''}}.
$$
It follows that $\Ical_{\Ucal_\xx|\Ucal_\xx\cap\Ucal_{\xx'}} \simeq \Ical_{\Ucal_{\xx'}|\Ucal_\xx\cap\Ucal_{\xx'}}$. Hence $\Ic$ is a well defined ideal sheaf of $\Ocal_{\ol{\Mod}_{0,n}}$. It is also clear from the definition that if $\xx\in \Mod_{0,n}$ then $\Ic_\xx \simeq \Ocal_{\ol{\Mod}_{0,n},\xx}$. Therefore $\mathrm{supp}(\Ic) \subset \partial\ol{\Mod}_{0,n}$.
\end{proof}

\begin{Definition}\label{def:blowup}
We denote by $\blowupsp$  the blow-up of $\ol{\Mod}_{0,n}$ with respect to the sheaf of ideals $\Ical$ (see \cite[p.163]{Hart} or \cite[\textsection IV.2]{EH00}).
\end{Definition}

By definition $\blowupsp$ comes equipped with a projection $\hat{p}: \blowupsp \ra \ol{\Mod}_{0,n}$.
As an analytic space, $\blowupsp$ can be described as follows: let $u=(u_1,\dots,u_{n-3-r})$ be a family of holomorphic functions  such that $(t,u)$ is a coordinate system on $\Uc_\xx$.
Let $\Ucal^*_\xx$ denote the set $\Ucal_\xx\cap \Mod_{0,n}$, that is $\Ucal^*_\xx=\{(t,u) \in \Ucal_\xx, \; t_\alpha \neq 0, \, \forall \alpha\in \NN(\xx)\}$.
Assuming that $r>0$, let
$$
\widehat{\Ucal}^{*}_\xx:=\{(t,u,[t^{\beta_0}:\dots:t^{\beta_r}]), \; (t,u) \in \Ucal^*_\xx\} \subset \Ucal^*_\xx\times \CP^r,
$$
and
$$
\Ucal^\#_\xx=\{(t,u,[v_0:\dots:v_r]) \in \Ucal_\xx\times \CP^{r}, \; v_jt^{\beta_k}=v_kt^{\beta_j}, \, \forall j,k \in \{0,\dots,r\}, j\neq k \} \subset \Ucal_\xx\times \CP^r.
$$
Then $\widehat{\Ucal}_\xx:=\hat{p}^{-1}(\Ucal_\xx)$ is isomorphic to the irreducible component of  $\Ucal^\#_\xx$ that contains $\widehat{\Ucal}^{*}_\xx$. Equivalently, one can define $\widehat{\Ucal}_\xx$ to be the closure of $\widehat{\Ucal}^*_\xx$ in $\Ucal\times\CP^r$.

\begin{Remark}\label{rk:blowup:independ:gen:set}
It is worth noticing that  $\widehat{\Ucal}_{\xx}$ depends only on $\Ical_{\Ucal_\xx}$, which means that if in the construction of $\widehat{\Ucal}_\xx$ we take two different generating sets of $\Ical_{\Ucal_\xx}$, then the resulting varieties are isomorphic.
\end{Remark}

We have the following classical result (see for instance \cite[Ch.II.\textsection 7, Prop. 7.16]{Hart}).
\begin{Proposition}\label{prop:proj:variety}
 The space $\blowupsp$ is a complex projective variety, and the natural projection $\hat{p}: \blowupsp \ra \ol{\Mod}_{0,n}$ is a birational, proper,  surjective morphism.
\end{Proposition}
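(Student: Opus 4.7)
The plan is to derive the statement from the general theory of blow-ups along coherent sheaves of ideals on Noetherian projective schemes, combined with the explicit local description given in \textsection\ref{subsec:blowup:construct}. First I would observe that $\Ical$ is coherent: by Theorem~\ref{th:ideal:sheaf:def} it is locally generated on each affine open $\Ucal_\xx$ by the finite family $\{t^{\beta_0},\dots,t^{\beta_r}\}$, hence is of finite type, and $\ol{\Mod}_{0,n}$ is a Noetherian scheme. The classical construction then realizes $\blowupsp$ as $\mathrm{Proj}\bigl(\bigoplus_{m\geq 0}\Ical^m\bigr)$ together with a projective morphism $\hat p : \blowupsp \to \ol{\Mod}_{0,n}$ (see e.g.~\cite[Ch.II, Prop.7.16]{Hart}). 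Since $\ol{\Mod}_{0,n}$ is itself projective, composing a projective embedding relative to $\ol{\Mod}_{0,n}$ with a projective embedding of $\ol{\Mod}_{0,n}$ itself exhibits $\blowupsp$ as a projective variety, and guarantees that $\hat p$ is proper.

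Next I would read off surjectivity and birationality from the analytic model. By construction, $\hat p^{-1}(\Ucal_\xx)=\widehat{\Ucal}_\xx$ is the closure in $\Ucal_\xx\times\CP^r$ of the graph $\widehat{\Ucal}_\xx^*$ of the holomorphic map $[t^{\beta_0}:\cdots:t^{\beta_r}]:\Ucal_\xx^*\to\CP^r$; in particular the fiber $\hat p^{-1}(\xx)$ is a non-empty closed subvariety of $\CP^r$, so $\hat p$ is surjective. For birationality, Theorem~\ref{th:ideal:sheaf:def} ensures $\mathrm{supp}(\Ical)\subset\partial\ol{\Mod}_{0,n}$, so that $\Ical$ restricts to the unit ideal on the dense open $\Mod_{0,n}$. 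Consequently $\hat p$ restricts to an isomorphism over $\Mod_{0,n}$, which proves that it is birational.

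The only delicate point, which I would address at the outset, is that "birational" presupposes that $\blowupsp$ is an \emph{irreducible} variety. This is ensured by the convention, explicit in \textsection\ref{subsec:blowup:construct}, of taking $\widehat{\Ucal}_\xx$ to be the unique irreducible component of $\Ucal_\xx^\#$ that contains $\widehat{\Ucal}_\xx^*$: as the closure of the graph of a morphism on the irreducible open $\Ucal_\xx^*$, $\widehat{\Ucal}_\xx$ is irreducible. These local pieces glue compatibly along the canonical isomorphism with $\Mod_{0,n}$ on the overlaps, yielding a single irreducible projective variety $\blowupsp$. Beyond this consistency check between the Proj-construction and the analytic patching, I do not expect any substantial obstacle, as the remaining content of the proposition is a direct application of the standard blow-up formalism.
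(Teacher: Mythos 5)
Your proposal is correct and coincides with the paper's approach: the paper gives no independent argument but simply invokes the classical blow-up result \cite[Ch.II, Prop. 7.16]{Hart}, and your proof is precisely the standard unpacking of that proposition (coherence of $\Ical$ on the Noetherian projective base, the $\mathrm{Proj}$ of the Rees algebra giving a projective, proper morphism, surjectivity from non-emptiness of fibers, and birationality/irreducibility from the fact that $\Ical$ is trivial over the dense open $\Mod_{0,n}$, over which $\hat{p}$ is an isomorphism).
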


\subsection{Support of the ideal sheaf $\Ical$}\label{subsec:supp:I}
Consider a point $\xx\in \ol{\Mod}_{0,n}$ as in \textsection\ref{subsec:blowup:construct}.
Recall that the node of $C_\xx$ corresponding to the pair $\alpha=\{j,j'\} \in \NN(\xx)$ is obtained by identifying the point $y_{jj'} \in C^j_\xx$ and the point $y_{j'j}\in C^{j'}_\xx$.
We assign to $y_{jj'}$ and $y_{j'j}$  some weights as follows: let $\hat{C}^0_{\xx,\alpha}$ and $\hat{C}^1_{\xx,\alpha}$ be the subcurves of $C_\xx$ that are separated by $\alpha$, we can assume that $C^j_\xx \subset \hat{C}^{0}_{\xx,\alpha}$, and $C^{j'}_\xx \subset \hat{C}^{1}_{\xx,\alpha}$, then
\begin{equation}\label{eq:def:weight:node}
\mu(y_{jj'})=\sum_{i\in I_{1,\alpha}} \mu_i \quad \text{ and } \quad \mu(y_{j'j})=\sum_{i \in I_{0,\alpha}} \mu_i.
\end{equation}
We have
\begin{equation}\label{eq:sum:weight:node}
   \mu(y_{jj'})+\mu(y_{j'j})=2, \quad \hbox{ for all  $\{j,j'\} \in \NN(\xx)$},
\end{equation}
and
\begin{equation}\label{eq:sum:weight:comp}
 \sum_{i \in I_j } \mu_i +\sum_{j'\in \NN_j(\xx)} \mu(y_{jj'}) =2, \quad \hbox{for each $j \in \{0,\dots,r\}$},
\end{equation}
where $\NN_j(\xx)=\{j' \in \{0,\dots,r\}, \; \{j,j'\} \in \NN(\xx)\}$ is the set  of nodes meeting the component $C^j_\xx$.

Each node of $C_\xx$ corresponds to a geometric edge of the dual tree $\T_\xx$. Such an edge consists of a pair of  oriented edges with opposite orientations.
The oriented edge from $v_j$ to $v_{j'}$ will be denoted by $e_{jj'}$.
We associate to $e_{jj'}$  the weight $\mu(e_{jj'})$ defined by:
$$
\mu(e_{jj'}):=\mu(y_{j'j})-\mu(y_{jj'}).
$$
Note that if $\alpha=\{j,j'\} \in \NN(\xx)$ then
\begin{equation}\label{eq:weight:node}
\mu_{\alpha}=\frac{1}{2}(\sum_{i\in I_{1,\alpha}}\mu_i-\sum_{i\in I_{0,\alpha}}\mu_i)=\frac{1}{2}|\mu (e_{jj'})|.
\end{equation}
We will call $\mu_{\alpha}$ the weight of $\alpha$.

\begin{Definition}[principal component and principal subcurve]\label{def:princ:comp}
Assume that all the nodes in $C_\xx$ have strictly positive weights. Then a component $C_\xx^j$ of $C_\xx$ is said to be {\em principal} if  we have $\mu(e_{jj'}) >0$(or equivalently  $\mu(y_{jj'}) < 1$) for all $j' \in \NN_j(\xx)$.

In the case  some nodes in $C_\xx$ have zero weight, we first smoothen those nodes to obtain another curve $C_{\xx'}$ parametrized by a point $\xx'$ in a neighborhood of $\xx$.
Each irreducible component of $C_{\xx'}$ corresponds to a subcurve of $C_\xx$ which is the union of some components connected by nodes of zero weight.
A subcurve of $C_\xx$  that corresponds to a principal component of $C_{\xx'}$ is called a {\em principal subcurve} of $C_\xx$. A component of $C_\xx$ is said to be principal if it is contained in a principal subcurve.
\end{Definition}

We will prove
\begin{Proposition}\label{prop:supp:I:princ:comp}
A point $\xx \in \ol{\Mod}_{0,n}$ is {\bf not} contained in the support of the sheaf of ideals $\Ical$ if and only  if $C_\xx$ has a unique principal subcurve.
\end{Proposition}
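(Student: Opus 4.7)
The plan is to reformulate the condition $\xx \notin \mathrm{supp}(\Ical)$ as a combinatorial statement about the dual tree of $C_\xx$, and then reduce it to a clean claim about oriented trees.

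The starting point is the observation that a point $\xx \in \ol{\Mod}_{0,n}$ lies outside $\mathrm{supp}(\Ical)$ iff the stalk $\Ical_\xx$ equals $\Ocal_{\ol{\Mod}_{0,n},\xx}$, which happens iff at least one generator $t^{\beta_j}$ of $\Ical_{\Ucal_\xx}$ is a unit at $\xx$. Since each $t_\alpha$ with $\alpha \in \NN(\xx)$ vanishes at $\xx$, this is equivalent to $\beta_j = 0$ for some $j$. Unpacking definition \eqref{eq:def:beta}, $\beta_j = 0$ means that for every node $\alpha \in \NN(\xx)$, either $\mu_{\Sc_\alpha} = 0$ or $C^j_\xx \subset \hat{C}^1_{\xx,\alpha}$ (the heavier side of $\alpha$).

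Next, following Definition~\ref{def:princ:comp}, I would smoothen all balanced nodes of $C_\xx$ (those with $\mu_{\Sc_\alpha} = 0$) to obtain the intermediate curve $C_{\xx'}$; its dual tree $\T_{\xx'}$ is obtained from $\T_\xx$ by contracting all edges with $\mu_\alpha = 0$. Every remaining edge of $\T_{\xx'}$ carries a strict orientation pointing toward its heavier side. For each irreducible component $C^j_\xx$, let $\tilde v_j$ denote the vertex of $\T_{\xx'}$ whose corresponding subcurve of $C_\xx$ contains $C^j_\xx$. The preceding analysis then shows that $\beta_j = 0$ iff $\tilde v_j$ lies on the heavy side of every edge of $\T_{\xx'}$; call such a vertex \emph{dominant}. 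On the other hand, by Definition~\ref{def:princ:comp}, the principal subcurves of $C_\xx$ correspond exactly to the sinks of $\T_{\xx'}$, i.e. vertices at which every incident edge is incoming.

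The remaining core of the argument is the following combinatorial equivalence for a finite oriented tree, which I expect to be the main (though modest) obstacle: there exists a dominant vertex iff there is a unique sink. For the forward direction, if $\tilde v$ is dominant then for every other vertex $\tilde u$ the edge incident to $\tilde u$ along the path to $\tilde v$ must be outgoing (it points toward the heavy side, which contains $\tilde v$), so $\tilde u$ is not a sink. For the converse, given a unique sink $\tilde v$ and any edge $e$, removing $e$ yields two subtrees $T_1, T_2$ with $\tilde v \in T_1$; the subtree $T_2$ contains no sink by uniqueness, so iterating ``follow an outgoing edge'' from any vertex of $T_2$ cannot terminate inside $T_2$ and must exit through $e$, forcing $e$ to be oriented from $T_2$ toward $T_1$. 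Since every finite oriented tree admits at least one sink (again by iterating outgoing edges), the existence of a dominant vertex is equivalent to the sink being unique. Combining the three steps yields: $\xx \notin \mathrm{supp}(\Ical)$ iff $\T_{\xx'}$ has a dominant vertex iff $\T_{\xx'}$ has a unique sink iff $C_\xx$ has a unique principal subcurve.
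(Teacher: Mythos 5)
Your proposal is correct and follows essentially the same route as the paper: both reduce the statement to the criterion that $\xx\notin\mathrm{supp}(\Ical)$ exactly when some generator $t^{\beta_j}$ is the unit, i.e.\ $\beta_j=0$, and then settle the resulting question about the weight-oriented dual tree using acyclicity (directed paths in a finite tree terminate at sinks). Your single combinatorial lemma ``dominant vertex exists iff the sink is unique'' is just a cleanly packaged version of the paper's Lemmas~\ref{lm:princ:comp:exist}--\ref{lm:blowup:non:princ:comp} together with the contradiction argument in its proof of Proposition~\ref{prop:supp:I:princ:comp}, so no further changes are needed.
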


We first have
\begin{Lemma}\label{lm:princ:subcurv:weight}
If $C^j_\xx$ and $C^{j'}_\xx$ are contained in the same principal subcurve, then $\beta_j=\beta_{j'}$.
\end{Lemma}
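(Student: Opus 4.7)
The plan is to reduce the statement to a node-by-node comparison. Since $\beta_j$ and $\beta_{j'}$ are tuples indexed by $\NN(\xx)$, it suffices to show that for each $\alpha \in \NN(\xx)$ one has $\beta_{j,\alpha} = \beta_{j',\alpha}$. By the definition in \eqref{eq:def:beta}, the entry $\beta_{j,\alpha}$ depends only on two things: which of the two subcurves $\hat{C}^0_{\xx,\alpha}, \hat{C}^1_{\xx,\alpha}$ contains $C^j_\xx$, and the weight $\mu_{\Sc_\alpha}$. I would therefore split into two cases according to how $\alpha$ sits relative to the unique path in the dual tree $\T_\xx$ from $v_j$ to $v_{j'}$.

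In the first (easy) case, the edge corresponding to $\alpha$ does not lie on this path. Then removing $\alpha$ leaves $v_j$ and $v_{j'}$ in the same connected component of $\T_\xx\setminus\{\alpha\}$, so $C^j_\xx$ and $C^{j'}_\xx$ lie on the same side of the node, i.e.\ both in $\hat{C}^0_{\xx,\alpha}$ or both in $\hat{C}^1_{\xx,\alpha}$, and $\beta_{j,\alpha} = \beta_{j',\alpha}$ by definition.

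In the second case, the edge $\alpha$ does separate $v_j$ from $v_{j'}$. The key observation is that $\alpha$ must then be a zero-weight node, i.e.\ $\mu_{\Sc_\alpha} = 0$. Indeed, the assumption that $C^j_\xx$ and $C^{j'}_\xx$ lie in the same principal subcurve means, by Definition~\ref{def:princ:comp}, that the two components are connected inside that subcurve through a chain of nodes of zero weight; since $\T_\xx$ is a tree (genus $0$), the path realizing this connection is the unique $v_j$-to-$v_{j'}$ path in $\T_\xx$, and by hypothesis it passes through $\alpha$. Hence $\alpha$ has zero weight, so $d\mu_{\Sc_\alpha}=0$, and both $\beta_{j,\alpha}$ and $\beta_{j',\alpha}$ vanish regardless of which side each component lies on.

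There is no real obstacle; the proof is just an unwinding of the definition of principal subcurve combined with the tree structure of $\T_\xx$. The only place that requires a moment of care is the assertion that two components of a principal subcurve are linked exclusively by zero-weight nodes, which uses the fact that in a tree any two vertices are joined by a unique (reduced) path, so the path witnessing membership in the same principal subcurve coincides with the geodesic in $\T_\xx$.
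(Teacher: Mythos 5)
Your proposal is correct and follows essentially the same route as the paper's proof: the paper likewise notes that $\beta_{j,\alpha}=\beta_{j',\alpha}$ for any node $\alpha$ not separating $C^j_\xx$ and $C^{j'}_\xx$, and that all separating nodes have weight zero by the definition of a principal subcurve, so the corresponding entries vanish. Your version merely spells out the tree-path argument that the paper leaves implicit, so there is nothing to change.
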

\begin{proof}
By definition, if the node $\alpha$ does not separate $C^j_\xx$ and $C^{j'}_\xx$, then $\beta_{j,\alpha}=\beta_{j',\alpha}$.
Since by definition all the edges that separate $C^j_\xx$ and $C^{j'}_\xx$ have weight zero, the lemma follows.
\end{proof}

\begin{Lemma}\label{lm:princ:comp:exist}
There always exists at least one principal component.
\end{Lemma}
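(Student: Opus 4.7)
The plan is to reduce to the case where every node of $C_\xx$ has strictly positive weight, and then to produce a principal component by an acyclic orientation argument on the dual tree $\T_\xx$.

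First I would handle the reduction. If some nodes $\alpha \in \NN(\xx)$ satisfy $\mu_\alpha = 0$, Definition~\ref{def:princ:comp} instructs us to smoothen those nodes, obtaining a curve $C_{\xx'}$ whose dual tree is obtained from $\T_\xx$ by contracting every zero-weight edge. By the definition of a principal subcurve, any principal component of $C_{\xx'}$ gives rise to a principal subcurve of $C_\xx$, and hence to at least one irreducible component of $C_\xx$ which is principal. It therefore suffices to prove the lemma under the assumption $\mu_\alpha > 0$ for every $\alpha \in \NN(\xx)$.

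Assuming now that all nodes have positive weight, I would orient $\T_\xx$ as follows. By \eqref{eq:sum:weight:node} and \eqref{eq:weight:node}, for each edge $\{j,j'\}$ the numbers $\mu(y_{jj'})$ and $\mu(y_{j'j})$ sum to $2$ and are distinct, so exactly one endpoint of the edge lies on the ``heavier side'' (the side whose marked points have total weight greater than $1$). I orient each edge so that its heavier endpoint is the head of the arrow. Under this orientation, $C^j_\xx$ is principal precisely when every edge incident to $v_j$ points toward $v_j$, i.e.\ when $v_j$ is a sink of the oriented tree.

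It remains to exhibit a sink. Since $\T_\xx$ is a finite tree with no cycles, starting at any vertex and repeatedly following outgoing edges produces a walk that can never revisit a vertex and must terminate at a vertex with no outgoing edges. Any such vertex is a sink, hence yields the desired principal component. I do not foresee any real obstacle here; the only point requiring care is matching conventions so that sinks in this orientation correspond exactly to the condition $\mu(y_{jj'}) < 1$ of Definition~\ref{def:princ:comp}, and that the reduction step is compatible with the way a principal component of $C_{\xx'}$ is defined to induce principal components of $C_\xx$.
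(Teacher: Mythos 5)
Your proof is correct and follows essentially the same route as the paper: after the same reduction to strictly positive node weights, your orientation of $\T_\xx$ toward the heavier side is exactly the paper's device of following negatively weighted oriented edges, and your sink-existence argument is the direct (non-contradiction) form of the paper's observation that such a path in a finite tree cannot be prolonged indefinitely. The conventions match Definition~\ref{def:princ:comp} as you claim, so no gap remains.
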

\begin{proof}
It suffices to consider the case $C_\xx$ does not have zero weighted nodes.
If there exists  no principal  component, then for every vertex $v_j$ of $\T_\xx$ there exists an oriented edge $e_{jj'}$ such that $\mu(e_{jj'})<0$.  Consider a path $\gamma$ in $\T_\xx$ that is composed uniquely by oriented edges with negative weight. Let $e_{jj'}$ be the last edge of this path. Since there exists $j''$ such that $\mu(e_{j'j''}) <0$ (note that we must have $j''\neq j$), we can always extend $\gamma$ to a longer path. Thus, there exists such a path $\gamma$ of length $r+1$. Since $\T_\xx$ has exactly $r+1$ vertices, this means that $\gamma$ must contain  a cycle. But $\T_\xx$ is a tree, so this is impossible.
\end{proof}


\begin{Lemma}\label{lm:comp:connect:princ:comp}
If the component $C^j_\xx$ of $C_\xx$ is  not principal, then there is a principal component $C^{k}_\xx$ such that the unique path in $\T_\xx$ from $v_j$ to $v_{k}$ consists  only of  edges of weight $\leq 0$, and at least one of those edges has negative weight.
\end{Lemma}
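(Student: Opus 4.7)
The plan is to build the desired path greedily by following oriented edges of strictly negative weight in the dual tree, after first collapsing zero-weight edges, and to rule out backtracking using the antisymmetry $\mu(e_{jj'})=-\mu(e_{j'j})$ together with the acyclicity of trees. First I would pass to the smoothened situation: let $\xx'$ be a point in a neighborhood of $\xx$ obtained by smoothing every zero-weight node of $C_\xx$, so that the vertices of the dual tree $\T_{\xx'}$ correspond to the maximal subcurves of $C_\xx$ glued together along zero-weight nodes, and every edge of $\T_{\xx'}$ carries a strictly negative weight in exactly one of its two orientations. In these terms the subcurve $S_j \supset C^j_\xx$ is non-principal precisely when some oriented edge of $\T_{\xx'}$ leaving $S_j$ has strictly negative weight.

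Next I would run the greedy walk: set $S^{(0)}:=S_j$, and while $S^{(\ell)}$ is non-principal choose an oriented edge $e_{S^{(\ell)},S^{(\ell+1)}}$ of $\T_{\xx'}$ with $\mu(e_{S^{(\ell)},S^{(\ell+1)}})<0$ and move to $S^{(\ell+1)}$. The main thing to verify is that the walk never retraces an edge: if at some step we tried to return along the edge that had just brought us to $S^{(\ell)}$, that edge would have strictly negative weight in both orientations, contradicting $\mu(e_{jj'})=-\mu(e_{j'j})$. Because $\T_{\xx'}$ is a finite tree, a walk that never reuses an edge cannot revisit a vertex either; hence the construction terminates after finitely many steps at a vertex $S_k$ which is necessarily principal, and at least one edge of strictly negative weight has been crossed since $S^{(0)}=S_j$ is itself non-principal.

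Finally I would transfer the conclusion to $\T_\xx$: take $v_k$ to be the vertex of $\T_\xx$ at which the walk entered $S_k$; then $C^k_\xx \subset S_k$ is principal by the definition of principal subcurve. The unique path from $v_j$ to $v_k$ in $\T_\xx$ is the concatenation of intra-subcurve segments lying inside the successive $S^{(0)},S^{(1)},\dots,S_k$, which consist only of edges of weight $0$ (the edges that were contracted in the smoothening), joined by the inter-subcurve edges selected during the greedy walk, each of strictly negative weight in the direction of traversal. Hence every edge of this path has weight $\leq 0$ and at least one has strictly negative weight, as required. The only non-routine point in the whole argument is the no-backtracking observation, which is immediate from antisymmetry of the edge weights, so I do not anticipate any serious obstacle.
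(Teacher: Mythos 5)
Your proof is correct and follows essentially the same route as the paper: a greedy walk along strictly negative oriented edges, with backtracking ruled out by the antisymmetry $\mu(e_{j'j})=-\mu(e_{jj'})$ and termination forced by the tree structure, ending at a principal component. The only difference is cosmetic: where the paper simply reduces to the case with no zero-weight nodes, you spell out the smoothening and the transfer of the path back to $\T_\xx$, which is a harmless elaboration of the same reduction.
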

\begin{proof}
Again, it is enough to prove the lemma in the case $C_\xx$ does not have zero weighted nodes.
Assume that $C^{j}_\xx$ is not principal. Set $j_0=j$.  By definition, $C^{j_0}_\xx$ has an adjacent component $C^{j_1}_\xx$ such that the edge $e_{j_0j_1}$ is negatively  weighted.
 If $C^{j_1}_\xx$ is not principal then there exists $C^{j_2}_\xx$ such that $\mu(e_{j_1j_2})<0$. Note that $\mu(e_{j_1j_0})=-\mu(e_{j_0j_1})>0$, therefore $e_{j_1j_2} \neq e_{j_1j_0}$.
 Continue this procedure we get a sequence of negatively weighted edges $(e_{j_0j_1},e_{j_1j_2},\dots,e_{j_{i-1}j_{i}})$ of $\T_\xx$.
 The concatenation of those edges gives a path $\gamma$ in $\T_\xx$.
 If $C^{j_i}_\xx$ is not principal, then the path $\gamma$ can be extended to a longer path.
 Since $\T_\xx$ is a tree, the length of $\gamma$ cannot be larger than $r$, which means that the procedure must terminate after finitely many steps.
 Let $s$ be the length of  $\gamma$ when the procedure terminates.
 Then $C^{j_s}_\xx$ is a principal component, and  $\gamma=e_{j_{s-1}j_s}*\dots*e_{j_0j_1}$ is the desired path.
\end{proof}

As a consequence of Lemma~\ref{lm:comp:connect:princ:comp}, we have

\begin{Lemma}\label{lm:blowup:non:princ:comp}
If $C^j_\xx$ is not a principal component of $C_\xx$, then there exists $k\in \{0,\dots,r\}$ such that $t^{\beta_j}/t^{\beta_k}$ is a non-constant monomial vanishing at $0\in \Delta^{\NN(\xx)}$.
\end{Lemma}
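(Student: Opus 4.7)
The plan is to combine Lemma~\ref{lm:comp:connect:princ:comp} with a direct coordinatewise comparison of exponent vectors. Concretely, if I can exhibit a principal component $C^k_\xx$ for which $\beta_{j,\alpha} \geq \beta_{k,\alpha}$ for every $\alpha \in \NN(\xx)$, with strict inequality at some coordinate, then
\[
\frac{t^{\beta_j}}{t^{\beta_k}} \;=\; \prod_{\alpha \in \NN(\xx)} t_\alpha^{\beta_{j,\alpha}-\beta_{k,\alpha}}
\]
is a non-constant monomial in the $t_\alpha$'s that vanishes at $0 \in \Del^{\NN(\xx)}$, which is exactly the conclusion.

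The first step is to reinterpret the definition \eqref{eq:def:beta} in terms of edge orientations in $\T_\xx$. Fix a node $\alpha = \{\ell,\ell'\} \in \NN(\xx)$ with $C^\ell_\xx \subset \hat{C}^0_{\xx,\alpha}$. Combining the labeling convention \eqref{eq:conv:partition} with \eqref{eq:def:weight:node} and \eqref{eq:weight:node}, I get
\[
\mu(e_{\ell\ell'}) \;=\; \mu(y_{\ell'\ell}) - \mu(y_{\ell\ell'}) \;=\; \mu(I_{0,\alpha}) - \mu(I_{1,\alpha}) \;=\; -2\mu_\alpha \;\leq\; 0.
\]
Hence $\beta_{\ell,\alpha} = d\mu_\alpha$ precisely when the oriented edge corresponding to $\alpha$, emanating from $v_\ell$, has weight $\leq 0$; otherwise $\beta_{\ell,\alpha} = 0$.

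Next, I apply Lemma~\ref{lm:comp:connect:princ:comp} to produce a principal component $C^k_\xx$ together with a path $\gamma$ in $\T_\xx$ from $v_j$ to $v_k$ whose oriented edges (in the direction of traversal) all have weight $\leq 0$, with at least one strictly negative. For each $\alpha \in \NN(\xx)$ I compare coordinates. If the edge of $\T_\xx$ corresponding to $\alpha$ does not lie on $\gamma$, then removing it leaves $v_j$ and $v_k$ in the same connected component, so $C^j_\xx$ and $C^k_\xx$ lie in the same $\hat{C}^?_{\xx,\alpha}$ and $\beta_{j,\alpha} = \beta_{k,\alpha}$. If the edge does lie on $\gamma$, then the portion of $\gamma$ crossing it is an oriented edge of weight $\leq 0$ from the $v_j$-side to the $v_k$-side, so by the reformulation above $v_j$ lies in $\hat{C}^0_{\xx,\alpha}$ while $v_k$ lies in $\hat{C}^1_{\xx,\alpha}$, giving $\beta_{j,\alpha} = d\mu_\alpha$ and $\beta_{k,\alpha} = 0$.

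Finally, the strictly negative edge on $\gamma$ guaranteed by Lemma~\ref{lm:comp:connect:princ:comp} corresponds to some $\alpha$ with $\mu_\alpha > 0$, so at that coordinate $\beta_{j,\alpha} = d\mu_\alpha > 0 = \beta_{k,\alpha}$. Combined with the componentwise inequality $\beta_j \geq \beta_k$, this delivers the required non-constant monomial. I expect the only real subtlety to be the careful bookkeeping of signs relating the weight convention for oriented edges, the labeling convention \eqref{eq:conv:partition} for $\{I_{0,\alpha},I_{1,\alpha}\}$, and the piecewise definition of $\beta$; once that identification is laid out cleanly in the first step, everything else reduces to a purely combinatorial separation argument on the tree $\T_\xx$.
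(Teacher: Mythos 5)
Your proof is correct and follows essentially the same route as the paper: invoke Lemma~\ref{lm:comp:connect:princ:comp} to get a principal component $C^k_\xx$ joined to $C^j_\xx$ by a path of non-positively weighted edges, note that $\beta_{j,\alpha}=\beta_{k,\alpha}$ for nodes not separating the two components while $\beta_{j,\alpha}=d\mu_\alpha$, $\beta_{k,\alpha}=0$ for separating nodes, so that $t^{\beta_j}/t^{\beta_k}=\prod_{\alpha}t_\alpha^{d\mu_\alpha}$ over the separating nodes, with the strictly negative edge guaranteeing a positive exponent. Your write-up is merely more explicit than the paper's about the sign conventions and the zero-weight degenerate case, which is harmless.
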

\begin{proof}
By Lemma~\ref{lm:comp:connect:princ:comp}, there is a principal component $C^k_\xx$ such that the path from $v_j$ to $v_k$ in $\T_\xx$ consists only of negatively weighted edges.
This means that for any node $\alpha$ that separates $C^j_\xx$ and $C^k_\xx$, we always have $C^j_\xx\subset \hat{C}^{0}_{\xx,\alpha}$.
Thus, $t^{\beta_j}/t^{\beta_k}=\prod_{\alpha\in \fracs(j,k)}t_\alpha^{d\mu_\alpha}$, where $\fracs(j,k)$ is the set of nodes that separate $C^j_\xx$ and $C^k_\xx$.
\end{proof}

Let  $\mathring{C}^1_\xx,\dots,\mathring{C}^{r_0}_\xx$ denote the principal subcurves of $C_\xx$.
Let $\beta^*_m$ be the exponent vector associated with any irreducible component of $\mathring{C}^m_\xx$ (by Lemma~\ref{lm:princ:subcurv:weight} all the components of $\mathring{C}^m_\xx$ have the same associated exponent vector).
An immediate consequence of Lemma~\ref{lm:blowup:non:princ:comp} is
\begin{Corollary}\label{cor:generators:I}
  The ideal sheaf $\Ical_{|\Ucal_\xx}$ is generated by the functions $\{t^{\beta^*_1},\dots,t^{\beta^*_{r_0}}\}$.
\end{Corollary}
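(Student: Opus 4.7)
The plan is to show that each generator $t^{\beta_j}$, for $j \in \{0,\dots,r\}$, lies in the ideal generated by $\{t^{\beta^*_1},\dots,t^{\beta^*_{r_0}}\}$, and conversely. The second inclusion is immediate: by definition each $\beta^*_m$ coincides with $\beta_j$ for some irreducible component $C^j_\xx$ of the principal subcurve $\mathring{C}^m_\xx$, so $\{t^{\beta^*_1},\dots,t^{\beta^*_{r_0}}\} \subset \{t^{\beta_0},\dots,t^{\beta_r}\}$. The content is in the first inclusion.

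For the first inclusion, I would split the generators $t^{\beta_j}$ according to whether $C^j_\xx$ is principal. If $C^j_\xx$ is contained in some principal subcurve $\mathring{C}^m_\xx$, then Lemma~\ref{lm:princ:subcurv:weight} gives $\beta_j = \beta^*_m$, so $t^{\beta_j} = t^{\beta^*_m}$ is manifestly in the right-hand ideal. If $C^j_\xx$ is not principal, then Lemma~\ref{lm:blowup:non:princ:comp} produces an index $k \in \{0,\dots,r\}$ such that $t^{\beta_j}/t^{\beta_k}$ is a (non-trivial) monomial in the $t_\alpha$, hence an element of $\Ocal_{\ol{\Mod}_{0,n}}(\Ucal_\xx)$. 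Inspecting the proof of Lemma~\ref{lm:blowup:non:princ:comp}, the component $C^k_\xx$ produced is in fact principal: indeed the lemma is obtained by iterating Lemma~\ref{lm:comp:connect:princ:comp}, which terminates precisely at a principal component. Letting $m$ be the index of the principal subcurve containing $C^k_\xx$, Lemma~\ref{lm:princ:subcurv:weight} gives $\beta_k = \beta^*_m$, so
\[
t^{\beta_j} \;=\; (t^{\beta_j}/t^{\beta_k})\cdot t^{\beta^*_m}
\]
lies in the ideal generated by $\{t^{\beta^*_1},\dots,t^{\beta^*_{r_0}}\}$.

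A small subtlety to handle carefully is the case where $C_\xx$ has nodes of weight zero: in that situation the principal subcurves are unions of several irreducible components, and Lemma~\ref{lm:princ:subcurv:weight} is precisely the tool that ensures $\beta_j$ is constant along each such union, so that $\beta^*_m$ is well defined. Apart from this bookkeeping, the argument is formal and I expect no real obstacle: the proof is essentially a rewriting of the generating set that collapses redundancies coming from (a) components inside the same principal subcurve and (b) non-principal components whose associated monomial is divisible by the monomial of some principal component.
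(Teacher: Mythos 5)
Your proof is correct and follows essentially the same route as the paper, which presents the corollary as an immediate consequence of Lemma~\ref{lm:blowup:non:princ:comp} (whose proof indeed produces a \emph{principal} component $C^k_\xx$) combined with Lemma~\ref{lm:princ:subcurv:weight}; you have simply written out the divisibility and redundancy-collapsing steps that the paper leaves implicit.
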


\subsection*{Proof of Proposition~\ref{prop:supp:I:princ:comp}}
\begin{proof}
Recall that $\Ical$ is generated by the functions $\{t^{\beta_0},\dots,t^{\beta_r}\}$ in a neighborhood of $\xx$.
Assume first that $C_\xx$ has only one principal subcurve. We claim that  if $C^k_\xx$ is a principal irreducible component, then $t^{\beta_k}=1$, which implies that $\Ical \simeq \Ocal_{\ol{\Mod}_{0,n}}$ in a neighborhood of $\xx$. Suppose that $t^{\beta_k}\neq 1$. Then there is a node $\alpha_0=\{j_0,j'_0\}$ such that $C^k_\xx \subset \hat{C}^{0}_{\xx,\alpha_0}$.
Without loss of generality, we can assume that $\alpha_0$ separates $C^{j_0}_\xx$ and $C^k_\xx$.
This means that the unique path in $\T_\xx$ from $v_{j_0}$ to $v_k$ contains the edge $e_{j_0j'_0}$, and $\mu(e_{j_0j'_0})>0$.
It follows in particular that $C^{j_0}_\xx$ is not  principal, because otherwise we would have two principal components that are not connected by a sequence of nodes of weight zero contradicting the assumption that $C_\xx$ has only one principal subcurve.

Since $C^{j_0}_\xx$ is not principal, by Lemma~\ref{lm:comp:connect:princ:comp}, there is a principal component $C^j_\xx$ such that the path from $v_{j_0}$ to $v_j$ consists only of negatively weighted edges.
Since $\mu(e_{j_0j'_0})>0$, the edge $e_{j_0j'_0}$ is not contained in this path.
Since $\T_\xx$ is a tree, the unique path in $\T_\xx$ from $v_k$ to $v_j$ is the concatenation of the path from $v_k$ to $v_{j_0}$ and the path from $v_{j_0}$ to $v_j$.  In particular, the path from $v_k$ to $v_j$ contains $e_{j'_0j_0}$, and again we have a contradiction to the assumption that $C_\xx$ has only one principal component.
Therefore, we can conclude that $t^{\beta_k}=1$.

\medskip

Assume now that $C_\xx$ has at least two principal subcurves. We claim that $t^{\beta_j}\neq 1$ for all $j\in \{0,\dots,r\}$, which implies that $\xx \in \mathrm{supp}(\Ical)$. By Lemma~\ref{lm:blowup:non:princ:comp}, we know that if $C^j_\xx$ is not a principal component, then there exists a principal component $C^k_\xx$ such that $t^{\beta_j}/t^{\beta_k}$ is a non constant monomial in the variables $(t_\alpha)_{\alpha \in \NN(\xx)}$. Thus $t^{\beta_j}\neq 1$ in this case. If $C^j_\xx$ is a principal component, since there are more than one principal subcurve, there is a principal component $C^k_\xx$ and a node $\alpha=\{k,k'\}$ attached to $C^k_\xx$ which separates $C^{k}_\xx$ and $C^j_\xx$. Since $C^k_\xx$ is principal, $\mu(y_{kk'}) < 1$, where $y_{kk'}$ is the point in $C^k_\xx$ corresponding to the node $\alpha$.
This implies that $C^j_\xx \subset \hat{C}^{0}_{\xx,\alpha}$.
Consequently, $\beta_{j,\alpha}=d\mu_\alpha > 0$, and  $t^{\beta_j} \neq 1$, since it contains the factor $t_\alpha^{d\mu_\alpha}$.
The proof of the proposition is now complete.
\end{proof}

Proposition~\ref{prop:supp:I:princ:comp} implies
\begin{Corollary}\label{cor:1:princ:comp}
If for all $\xx \in \ol{\Mod}_{0,n}$ the curve $C_\xx$ has only one principal subcurve, then the blow-up $\blowupsp$ is isomorphic to $\ol{\Mod}_{0,n}$.
\end{Corollary}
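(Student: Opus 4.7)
The plan is to derive the corollary as a direct consequence of Proposition~\ref{prop:supp:I:princ:comp}, together with a small sharpening extracted from its proof. Under the hypothesis, every stable curve $C_\xx$ parametrized by $\ol{\Mod}_{0,n}$ has a unique principal subcurve, so Proposition~\ref{prop:supp:I:princ:comp} tells us that no $\xx$ lies in $\mathrm{supp}(\Ical)$, i.e.\ $\mathrm{supp}(\Ical)=\vide$.

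The next step is to upgrade this to the statement that $\Ical = \Ocal_{\ol{\Mod}_{0,n}}$ globally. I would do this by revisiting the first half of the proof of Proposition~\ref{prop:supp:I:princ:comp}: for any $\xx$ with a unique principal subcurve, taking any irreducible component $C^k_\xx$ inside that subcurve one shows $\beta_k=0$, hence $t^{\beta_k}=1$. Therefore $1\in \Ical_{\Uc_\xx}$, which forces $\Ical_{\Uc_\xx}=\Ocal_{\ol{\Mod}_{0,n}|\Uc_\xx}$. Since this holds in a neighborhood of every point, $\Ical$ is the unit ideal sheaf on $\ol{\Mod}_{0,n}$.

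Finally, I would invoke the explicit local description of $\blowupsp$ given in \textsection\ref{subsec:blowup:construct}. On each chart $\Uc_\xx$, once we know that one of the generators $t^{\beta_k}$ equals $1$, the local model
$$
\widehat{\Uc}_\xx^* = \{(t,u,[t^{\beta_0}:\dots:t^{\beta_r}])\}\subset \Uc_\xx^*\times \CP^r
$$
is the graph of a morphism $\Uc_\xx^*\to \CP^r$ which extends holomorphically to all of $\Uc_\xx$ (since the $k$-th projective coordinate is nowhere zero). Taking the closure, $\widehat{\Uc}_\xx$ is the graph of an everywhere-defined map, so the projection $\hat{p}:\widehat{\Uc}_\xx\to \Uc_\xx$ is an isomorphism. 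Gluing these local isomorphisms gives that $\hat{p}:\blowupsp \to \ol{\Mod}_{0,n}$ is an isomorphism.

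There is essentially no obstacle here: all the delicate combinatorial analysis has already been done in Proposition~\ref{prop:supp:I:princ:comp}. The only subtlety to flag is that one must read off from that proof not merely $\xx\notin\mathrm{supp}(\Ical)$ but the stronger local statement that $\Ical_{\Uc_\xx}$ contains the element $1$; this identification of a trivial generator is exactly what ensures that the projective closure in the blow-up construction collapses to a graph, giving the identification $\blowupsp\simeq \ol{\Mod}_{0,n}$.
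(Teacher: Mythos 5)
Your proposal is correct and follows exactly the route the paper intends: by Proposition~\ref{prop:supp:I:princ:comp} (more precisely, the first half of its proof, which shows $t^{\beta_k}=1$ for a component of the unique principal subcurve) the ideal sheaf $\Ical$ is the unit ideal on all of $\ol{\Mod}_{0,n}$, and blowing up the unit ideal is an isomorphism, which you verify via the graph description of $\widehat{\Ucal}_\xx$ from \textsection\ref{subsec:blowup:construct}. The paper treats this as immediate and gives no separate argument, so your write-up simply supplies the routine details of the same proof.
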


\begin{Remark}\label{rk:1:princ:comp}
It is shown in \cite[\textsection 5]{KN18} that if all the weights $\{\mu_i, \, i=1,\dots,n\}$ are positive then $C_\xx$ has a unique principal subcurve for all $\xx \in \ol{\Mod}_{0,n}$.
However, this positivity condition is not necessary. For instance, for $\mu=(\frac{1}{2}^5,\frac{-1}{2})$ or equivalently $\kappa=(-1^5,1)$, we also have $\widehat{\Mod}_{0,6}(-1^5,1) \simeq \ol{\Mod}_{0,6}$.
\end{Remark}


\section{Boundary of the blow-up}\label{sec:bdry:blowup}
Let $\partial \blowupsp:=\hat{p}^{-1}(\partial \ol{\Mod}_{0,n})$.
We will call $\partial\blowupsp$ the boundary of $\blowupsp$.
In this section, we investigate the irreducible components of $\partial\blowupsp$.
Our goal in this section is to prove
\begin{Theorem}\label{th:part:bdry:div:blowup}
Each ireducible component of $\partial\blowupsp$ is a divisor.
The set of irreducible components of $\partial \blowupsp$ is in bijection with the set $\hat{\Pc}(\mu)$ of partitions $\Scal=\{I_0,I_1,\dots,I_r\}$ of $\{1,\dots,n\}$ with $r\geq 1$ such that either

\begin{itemize}
\item[(a)] $r=1$ and $\Sc=\{I_0,I_1\} \in \Pcal$, or

\item[(b)] $r\geq 2$ and $\Scal$ satisfies $\mu(I_0)<1$, and $\mu(I_j)>1$ for all $j=1,\dots,r$.
\end{itemize}
More precisely,  every partition $\Sc$ in  $\hat{\Pc}(\mu)$ determines  a unique stratum $D^*_\Sc$ in $\ol{\Mod}_{0,n}$, and there is a unique irreducible component $\hat{D}_\Sc$ of $\partial \blowupsp$ such that $\hat{D}=\ol{\hat{p}^{-1}(D^*_\Sc)}$.
\end{Theorem}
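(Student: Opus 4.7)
The plan is to analyse, for each open boundary stratum $S$ of $\ol{\Mod}_{0,n}$, the structure of the preimage $\hat{p}^{-1}(S)$ and then to match the resulting boundary divisors of $\blowupsp$ with the combinatorial data in $\hat{\Pc}(\mu)$. The fundamental tool is the local model from Section~\ref{sec:blowup:M0n}: for $\xx \in \ol{\Mod}_{0,n}$ whose stable curve $C_\xx$ has $r_0$ principal subcurves with associated monomials $t^{\beta^*_1},\dots,t^{\beta^*_{r_0}}$ generating $\Ic_{\Uc_\xx}$ by Corollary~\ref{cor:generators:I}, the blow-up $\hat{\Uc}_\xx$ is the closure of the graph of $[t^{\beta^*_1}:\dots:t^{\beta^*_{r_0}}]$ inside $\Uc_\xx \times \P^{r_0 - 1}$.

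I would first construct $\hat{D}_\Sc$ for each $\Sc \in \hat{\Pc}(\mu)$. Define $D^*_\Sc$ as the open part of $D_\Sc$ in case (a), and as the open stratum parametrising curves with star-shaped dual tree, centre carrying $I_0$ and leaves carrying $I_1,\dots,I_r$, in case (b); stability is automatic since $\mu(I_j) > 1$ forces $|I_j| \geq 2$. In case (a), the generic stable curve over $D_\Sc$ has a unique principal subcurve, namely $C^1_\xx$ when $\mu(I_0) < 1$, or the entire (smoothed) curve when $\mu(I_0) = 1$ makes the node carry zero weight; hence $\hat{p}$ restricts to an isomorphism over $D^*_\Sc$ and $\hat{D}_\Sc := \ol{\hat{p}^{-1}(D^*_\Sc)}$ is the strict transform, an irreducible divisor. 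In case (b), a generic $\xx \in D^*_\Sc$ has exactly the $r$ leaf components as principal subcurves (because $\mu(I_j) > 1$ yields $\mu(y_{j0}) = 2 - \mu(I_j) < 1$), so $r_0 = r$. A direct computation in the local model shows that parametrising the nodal smoothing parameters as $t_k = c_k\epsilon^{1/m_k}$ with $m_k = d\mu_{\alpha_k}$ yields $\epsilon$-independent limits $[c_1^{-m_1}:\dots:c_r^{-m_r}]$ whose closure as $(c_1,\dots,c_r) \in (\C^*)^r$ fills $\P^{r-1}$. Thus $\hat{p}^{-1}(D^*_\Sc)$ is a $\P^{r-1}$-bundle over $D^*_\Sc$, of dimension $(n-3-r) + (r-1) = n-4$, and its closure $\hat{D}_\Sc$ is an irreducible divisor of $\blowupsp$.

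Conversely, let $E$ be an irreducible component of $\partial\blowupsp$, let $S$ be the open stratum of $\partial\ol{\Mod}_{0,n}$ containing the generic point of $\hat{p}(E)$, and write $r$ for the codimension of $S$ and $r_0$ for the number of principal subcurves of a generic curve in $S$. Since the generic fibre of $\hat{p}|_E$ lives in $\P^{r_0-1}$, the dimension count $n-4 = \dim E \leq \dim S + (r_0 - 1) = (n - 3 - r) + (r_0 - 1)$ yields $r_0 \geq r$. The decisive combinatorial observation is the reverse inequality: by \eqref{eq:sum:weight:node}, each edge of the dual tree has at most one principal endpoint (two principal endpoints would give $\mu(y_{jj'})+\mu(y_{j'j}) < 2$), hence $\sum_{v \text{ principal}} \deg(v) \leq r$; comparing with the total degree sum $2r$ forces $r_0 \leq r$, with equality only when the unique non-principal vertex has degree $r$, i.e.\ when the dual tree is a star centred at this vertex. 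Combined with the strict principality conditions on the leaves, this pins $\Sc$ down to case (a) when $r = 1$ and to case (b) when $r \geq 2$; in both cases $E = \hat{D}_\Sc$. Distinctness of the $\hat{D}_\Sc$ follows from distinctness of the images $\hat{p}(\hat{D}_\Sc) = \ol{D^*_\Sc}$.

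The main obstacle I anticipate is the rigorous verification that the fibre $\hat{p}^{-1}(\xx)$ fills all of $\P^{r-1}$ in case (b): this is essentially a toric statement about the monomial ideal generated by the $t^{\beta_j}$'s and requires identifying the Newton polytope and its vertices correctly. A secondary subtlety is the correct treatment of strata whose generic curve has zero-weight nodes, which would \emph{a priori} reduce $r_0$ and fall outside the clean $r_0 = r$ regime; this is absorbed by first contracting zero-weight edges as in Definition~\ref{def:princ:comp}, and by noting that the strict inequalities defining case (b) of $\hat{\Pc}(\mu)$ prevent such merging for the star strata under consideration.
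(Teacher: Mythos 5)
Your proposal is correct and is essentially the paper's own argument: the same local monomial model coming from Corollary~\ref{cor:generators:I}, the same dimension count forcing $r_0=r$ and hence a star-shaped dual tree (your edge/degree count is a reformulation of the paper's observation that two principal subcurves cannot be adjacent), and the same description of the fibers over star strata as $\CP^{r-1}$ (your path $t_k=c_k\epsilon^{1/m_k}$ makes explicit what the paper checks directly from the equations $v_jt_j^{m_j}=v_kt_k^{m_k}$), with your choice of the stratum through the generic point of $\hat{p}(E)$ playing the role of the paper's Lemma~\ref{lm:str:proj:div:blowup} and Corollary~\ref{cor:str:proj:div:blowup}. The only statement you use without justification is the theorem's first assertion, $\dim E=n-4$ for every irreducible component $E$ of $\partial\blowupsp$; as in Lemma~\ref{lm:irrd:comp:boundary:is:div}, this follows because $\partial\ol{\Mod}_{0,n}$ is locally defined by a single equation, hence so is $\partial\blowupsp$ inside the irreducible variety $\blowupsp$.
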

\begin{Remark}\label{rk:boundary:comp:prop:transf}\hfill
\begin{itemize}
\item[$\bullet$] If $\Scal\in \Pcal$ then  $\hat{D}_\Sc$ is the proper transform of $D_\Sc$ (that is the divisor of $\partial\ol{\Mod}_{0,n}$ associated to $\Sc$).
\item[$\bullet$] In general, each $\Sc=\{I_0,I_1,\dots,I_r\} \in \hat{\Pc}(\mu)$ determines a unique stratum $D^*_\Sc$ in $\partial \ol{\Mcal}_{0,n}$ as follows: any  pointed curve $(C,x_1,\dots,x_n)$ in $D^*_\Sc$ has exactly $r+1$ irreducible components denoted by $C^0,\dots,C^r$, where the component $C^j$ contains the marked points $\{x_i, \; i \in I_j\}$, and for all $j=1,\dots,r$, there is a node between $C^0$ and $C^j$. 
\end{itemize}
\end{Remark}

\begin{Definition}\label{def:part:div:blowup}
The divisor of $\blowupsp$ associated to $\Scal \in \hat{\Pc}(\mu)$ is denoted by $\hat{D}_\Scal$, its projection in $\ol{\Mod}_{0,n}$ is denoted by $D_\Scal$.
\end{Definition}

We start by
\begin{Lemma}\label{lm:irrd:comp:boundary:is:div}
Every irreducible component of $\partial \blowupsp$ is a divisor
\end{Lemma}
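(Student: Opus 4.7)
The plan is to exploit the universal property of the blow-up together with the fact, from Theorem~\ref{th:ideal:sheaf:def}, that $\mathrm{supp}(\Ical)\subset\partial\ol{\Mod}_{0,n}$. By the construction of $\blowupsp$ as the blow-up of $\ol{\Mod}_{0,n}$ along $\Ical$, the pullback ideal sheaf $\hat{p}^{-1}\Ical\cdot\Ocal_{\blowupsp}$ is invertible, hence defines a Cartier divisor $\Ecal$ on $\blowupsp$ whose support is pure of codimension one. Since $\mathrm{supp}(\Ical)\subset\partial\ol{\Mod}_{0,n}$ and $\hat{p}$ restricts to an isomorphism over $\ol{\Mod}_{0,n}\setminus\mathrm{supp}(\Ical)$, we obtain $\mathrm{supp}(\Ecal)=\hat{p}^{-1}(\mathrm{supp}(\Ical))\subset\partial\blowupsp$, and $\hat{p}$ induces an isomorphism $\blowupsp\setminus\mathrm{supp}(\Ecal)\simeq\ol{\Mod}_{0,n}\setminus\mathrm{supp}(\Ical)$.

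Next, let $Z$ be an irreducible component of $\partial\blowupsp$; I would distinguish two cases. If $Z\subset\mathrm{supp}(\Ecal)$, then $Z$ is an irreducible component of the Cartier divisor $\mathrm{supp}(\Ecal)$, hence of codimension one in $\blowupsp$. Otherwise $Z\setminus\mathrm{supp}(\Ecal)$ is a nonempty open subset of $Z$, which under the isomorphism above is identified with an irreducible locally closed subset of $\partial\ol{\Mod}_{0,n}\setminus\mathrm{supp}(\Ical)$. Its closure in $\ol{\Mod}_{0,n}$ is an irreducible closed subset of $\partial\ol{\Mod}_{0,n}$, and since the latter has irreducible components $\{D_\Scal\}_{\Scal\in\Pcal}$, this closure is contained in some $D_\Scal$ with $D_\Scal\not\subset\mathrm{supp}(\Ical)$. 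Let $\widetilde{D}_\Scal$ be the closure in $\blowupsp$ of $\hat{p}^{-1}(D_\Scal\setminus\mathrm{supp}(\Ical))$: it is irreducible, has dimension $\dim D_\Scal=n-4=\dim\blowupsp-1$, and is contained in $\partial\blowupsp$. By construction $Z\subset\widetilde{D}_\Scal$, and maximality of $Z$ as an irreducible component of $\partial\blowupsp$ forces $Z=\widetilde{D}_\Scal$, a prime divisor of $\blowupsp$.

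The only substantive input is the assertion that $\hat{p}^{-1}\Ical\cdot\Ocal_{\blowupsp}$ is invertible, which is precisely the defining property of the blow-up along an ideal sheaf. Once this is in hand, the argument reduces to a clean dichotomy between proper transforms of the boundary divisors $D_\Scal$ of $\ol{\Mod}_{0,n}$ and irreducible components of the exceptional locus $\mathrm{supp}(\Ecal)$; I expect no significant obstacle beyond keeping the case analysis straight.
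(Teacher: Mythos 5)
Your argument is correct, but it follows a genuinely different route from the paper. The paper's proof is a one-liner: $\partial\ol{\Mod}_{0,n}$ is locally cut out in $\ol{\Mod}_{0,n}$ by a single equation (locally $\prod_\alpha t_\alpha=0$), hence so is its preimage $\partial\blowupsp=\hat{p}^{-1}(\partial\ol{\Mod}_{0,n})$, and Krull's principal ideal theorem (together with the irreducibility of $\blowupsp$, which contains $\Mod_{0,n}$ as a dense open set) forces every irreducible component of $\partial\blowupsp$ to have codimension one. You instead invoke the defining property of the blow-up, namely that $\hat{p}^{-1}\Ical\cdot\Ocal_{\blowupsp}$ is invertible, and split an irreducible component $Z$ of $\partial\blowupsp$ according to whether it lies in $\mathrm{supp}(\Ecal)=\hat{p}^{-1}(\mathrm{supp}(\Ical))$ (then it is a component of the support of an effective Cartier divisor on the integral variety $\blowupsp$, hence of codimension one) or not (then, via the isomorphism of $\hat{p}$ off $\mathrm{supp}(\Ical)$ and density of $Z\setminus\mathrm{supp}(\Ecal)$ in $Z$, it must coincide with the proper transform $\widetilde{D}_\Scal$ of some boundary divisor $D_\Scal$, again of dimension $n-4$). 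All the steps check out, granting the standard facts that $\blowupsp$ is integral and that the blow-up is an isomorphism over the complement of $\mathrm{supp}(\Ical)$. What your approach buys is extra structural information — it already sorts the components of $\partial\blowupsp$ into exceptional components and proper transforms of the $D_\Scal$, anticipating the finer classification carried out afterwards in Theorem~\ref{th:part:bdry:div:blowup} and Proposition~\ref{prop:proj:div:part:codim:2} — at the cost of heavier machinery than the lemma requires; the paper's observation that local principality of the boundary is inherited by its preimage handles both cases uniformly in a single sentence.
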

\begin{proof}
Since $\partial \ol{\Mod}_{0,n}$ is locally defined in $\ol{\Mod}_{0,n}$ by a single equation, so is $\partial \blowupsp$.
\end{proof}

The first family of irreducible components of $\partial \blowupsp$ arise from the irreducible components of $\partial\ol{\Mod}_{0,n}$.
Consider a boundary divisor $D_\Scal \subset \partial\ol{\Mod}_{0,n}$ for some $\Scal=\{I_0,I_1\} \in \Pcal$. By Proposition~\ref{prop:supp:I:princ:comp}, we know that $D_\Scal$ is not contained in the support of $\Ical$.
Let $\hat{D}_\Scal$  denote the {\em proper transform} of $D_\Scal$ in $\blowupsp$.
Then $\hat{D}_\Scal$ is an irreducible component of $\partial \blowupsp$.
Note that $\hat{D}_\Scal$ is isomorphic to the blow-up of $D_\Scal$ along the ideal sheaf $i_\Scal^{-1}\Ical$, where $i_\Scal: D_\Scal \to \ol{\Mod}_{0,n}$ is the natural embedding (see\cite[Chap.II, Cor.7.15]{Hart}).

We now look for irreducible components $\hat{D}$ of $\partial \blowupsp$ whose projection in $\ol{\Mod}_{0,n}$ is contained in the support of $\Ical$.
Recall that a stratum of $\ol{\Mod}_{0,n}$ is the set of points that parametrize pointed stable curves which have the same dual graph.
Each stratum of $\ol{\Mod}_{0,n}$ is a (connected) submanifold of $\ol{\Mod}_{0,n}$ whose codimension is equal to the number of edges in the dual graph.

\begin{Lemma}\label{lm:str:proj:div:blowup}
Let $S$ be a stratum of $\partial \Mod_{0,n}$. If $S\cap \hat{p}(\hat{D})\neq \varnothing$, then $\ol{S}\subset \hat{p}(\hat{D})$.
\end{Lemma}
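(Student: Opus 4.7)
The map $\hat{p}$ is proper by Proposition~\ref{prop:proj:variety}, so $\hat{p}(\hat{D})$ is a closed irreducible subvariety of $\ol{\Mod}_{0,n}$. Since the stratum $S$ is irreducible, it suffices to prove that $\hat{p}(\hat{D}) \cap S$ is \emph{open} in $S$: it is then simultaneously open and closed in the irreducible $S$ and nonempty by hypothesis, hence equals $S$, and closedness of $\hat{p}(\hat{D})$ forces $\ol{S} \subset \hat{p}(\hat{D})$.

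The core of the argument is a local product structure for the blow-up along $S$. Fix $\xx \in S \cap \hat{p}(\hat{D})$. By condition (ii) of \textsection\ref{subsec:blowup:construct} we can pick coordinates on a neighborhood so that $\Uc_\xx \simeq \Del^{\NN(\xx)} \times V$, with $t = (t_\alpha)_{\alpha \in \NN(\xx)}$ on the first factor and $u$-coordinates parametrizing $V$; in these coordinates $S \cap \Uc_\xx = \{0\} \times V$. Crucially, the generators $t^{\beta_0}, \dots, t^{\beta_r}$ of $\Ical_{\Uc_\xx}$ are monomials in $t$ alone, so the blow-up factors as
\[
\widehat{\Uc}_\xx \simeq \widehat{\Del}^{\NN(\xx)} \times V, \qquad \hat{p}|_{\widehat{\Uc}_\xx} = \hat{p}_0 \times \id_V,
\]
where $\hat{p}_0$ is the blow-up of $\Del^{\NN(\xx)}$ along $(t^{\beta_0}, \dots, t^{\beta_r})$. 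Consequently $\partial\blowupsp \cap \widehat{\Uc}_\xx = \hat{p}_0^{-1}(\partial\Del^{\NN(\xx)}) \times V$, and its irreducible components are exactly the products $E_i \times V$ for $E_i$ running over the irreducible components of $\hat{p}_0^{-1}(\partial\Del^{\NN(\xx)})$.

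Pick $y \in \hat{D}$ with $\hat{p}(y) = \xx$. The open subset $\hat{D} \cap \widehat{\Uc}_\xx$ of $\hat{D}$ is nonempty, hence irreducible; it is closed in $\widehat{\Uc}_\xx$, of the same codimension one as $\partial\blowupsp$, and contained in $\partial\blowupsp \cap \widehat{\Uc}_\xx$. By the description above and a dimension count, it must coincide with exactly one component $E_{i_0} \times V$. Since $y \in E_{i_0} \times V$ sits over $\xx \in \{0\} \times V$, the component $E_{i_0}$ meets $\hat{p}_0^{-1}(0)$, so $0 \in \hat{p}_0(E_{i_0})$. Therefore
\[
\hat{p}(\hat{D}) \supset \hat{p}(E_{i_0} \times V) = \hat{p}_0(E_{i_0}) \times V \supset \{0\} \times V = S \cap \Uc_\xx,
\]
which is an open neighborhood of $\xx$ in $S$. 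This shows that $\hat{p}(\hat{D}) \cap S$ is open in $S$ and completes the plan.

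The only delicate point is identifying the local irreducible components of $\partial\blowupsp$ inside $\widehat{\Uc}_\xx$ with products $E_i \times V$; this is precisely the pay-off of the fact that the generators of $\Ical_{\Uc_\xx}$ depend only on the $t$-variables, a feature built into the construction of \textsection\ref{subsec:blowup:construct}. Once this product decomposition is available, the rest of the argument is bookkeeping with irreducibility and closedness.
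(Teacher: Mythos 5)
Your proof is correct and follows essentially the same route as the paper: both arguments exploit that the generators of $\Ical_{\Uc_\xx}$ involve only the $t$-variables, so that locally $\hat{D}$ is saturated in the $u$-direction (the paper phrases this as the defining equations of $\hat{D}$ in $\Uc_\xx\times\CP^r$ not involving $u$, while you make it explicit via the product decomposition $\widehat{\Uc}_\xx\simeq\widehat{\Del}^{\NN(\xx)}\times V$ and a dimension count identifying $\hat{D}\cap\widehat{\Uc}_\xx$ with $E_{i_0}\times V$). In both cases the conclusion is the same: $\hat{p}(\hat{D})$ contains the open subset $S\cap\Uc_\xx$ of $S$, and closedness of $\hat{p}(\hat{D})$ together with irreducibility of $S$ gives $\ol{S}\subset\hat{p}(\hat{D})$.
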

\begin{proof}
Let $r$ be the codimension of $S$.
Consider a point $\xx \in S \cap \hat{p}(\hat{D})$.
Let $\Ucal_\xx$ be a  neighborhood of $\xx$ as in \textsection\ref{subsec:blowup:construct}, and $\widehat{\Ucal}_\xx=\hat{p}^{-1}(\Ucal_\xx)$. Then $\hat{D}\cap \widehat{\Ucal}_\xx$ is an irreducible component of the algebraic set defined by the equations
$$
v_it^{\beta_j}=v_jt^{\beta_i}, \; i,j=0,\dots,r, \text{ and } \prod_{\alpha\in \NN(\xx)}t_\alpha=0,
$$
in $\Ucal_\xx\times \CP^r$.
By definition, $S$ is defined by the equations $t_\alpha=0$, for all $\alpha \in \NN(\xx)$.
Remark that  $\Ucal_\xx\simeq \Delta^{\NN(\xx)}\times U$, and the equations defining $\hat{D}$ do not involve the parameters on $U$. Therefore, if $\hat{D}$ contains a point $(0,u,[v]) \in \Delta^{\NN(\xx)}\times U\times\Pb^{r}$, then $\{0\}\times U\times \{[v]\} \subset \hat{D}$, which means that $\hat{p}(\hat{D})$ contains an open subset of $S$. This implies that  $S\subset \hat{p}(\hat{D})$ (because $\hat{p}(\hat{D})\cap S$ must be a Zariski closed subset of $S$).
Since $\hat{p}(\hat{D})$ is a closed subset of $\ol{\Mod}_{0,n}$, we conclude that $\ol{S}\subset \hat{p}(\hat{D})$.
\end{proof}
\begin{Corollary}\label{cor:str:proj:div:blowup}
There is a stratum $S$ of $\ol{\Mod}_{0,n}$ such that  $\hat{p}(\hat{D})=\ol{S}$.
\end{Corollary}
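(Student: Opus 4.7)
The plan is to combine Lemma~\ref{lm:str:proj:div:blowup} with the basic topological properties of $\hat{p}$ and of the stratification of $\ol{\Mod}_{0,n}$. First, by Proposition~\ref{prop:proj:variety}, the morphism $\hat{p}: \blowupsp \to \ol{\Mod}_{0,n}$ is proper, hence closed; in particular $\hat{p}(\hat{D})$ is a Zariski closed subset of $\ol{\Mod}_{0,n}$. Since $\hat{D}$ is an irreducible component of $\partial\blowupsp$, the set $\hat{p}(\hat{D})$ is also irreducible.

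Next, I would use the fact that $\ol{\Mod}_{0,n}$ admits a stratification by finitely many locally closed strata, each labelled by the isomorphism class of a stable $n$-pointed tree. Each stratum $S$ is irreducible (it is isomorphic to a product $\prod_v \Mod_{0,n_v}$ of irreducible varieties, with trivial automorphism group because all marked points are labelled), hence its closure $\ol{S}$ is irreducible too. Every point of $\hat{p}(\hat{D})\subset \partial\ol{\Mod}_{0,n}$ lies in some stratum $S$, so $S\cap \hat{p}(\hat{D})\neq \vide$, and Lemma~\ref{lm:str:proj:div:blowup} yields $\ol{S}\subset \hat{p}(\hat{D})$. Collecting these inclusions, I obtain
\begin{equation*}
\hat{p}(\hat{D}) \;=\; \bigcup_{S \,:\, \ol{S}\subset \hat{p}(\hat{D})} \ol{S},
\end{equation*}
a \emph{finite} union of closed irreducible subsets of $\ol{\Mod}_{0,n}$.

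To finish, I invoke irreducibility: a finite union of closed irreducible sets that is itself irreducible must coincide with one of its members. Therefore there exists a (unique, maximal) stratum $S$ in the above union such that $\hat{p}(\hat{D})=\ol{S}$, which is the stratum claimed by the corollary. The only nontrivial inputs are the closedness of $\hat{p}$ (Proposition~\ref{prop:proj:variety}) and the standard finiteness/irreducibility of the stratification of $\ol{\Mod}_{0,n}$; given these, no serious obstacle arises, and the corollary is a direct consequence of Lemma~\ref{lm:str:proj:div:blowup}.
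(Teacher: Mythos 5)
Your proof is correct and follows essentially the same route as the paper: both use Lemma~\ref{lm:str:proj:div:blowup} to write $\hat{p}(\hat{D})$ as a finite union of closures of strata and then invoke the irreducibility of $\hat{p}(\hat{D})$ (the paper phrases this as the existence of a unique maximal stratum, you as "an irreducible finite union of closed irreducible sets equals one of its members"). Your explicit appeal to properness of $\hat{p}$ and to the finiteness of the stratification merely makes precise what the paper leaves implicit.
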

\begin{proof}
We have a partial order $\leqslant$  on the set of strata of $\ol{\Mod}_{0,n}$ defined by $S' \leqslant S$ if and only if $S'\subset \ol{S}$.
Note that if $S'\nsubseteq \ol{S}$ and $S \nsubseteq \ol{S'}$, then $\ol{S'}\cup \ol{S}$ is reducible.
By Lemma~\ref{lm:str:proj:div:blowup}, $\hat{p}(\hat{D})$ is a union of strata of $\ol{\Mod}_{0,n}$. We claim that there is a unique maximal stratum in $\hat{p}(\hat{D})$. Indeed, if $\hat{p}(\hat{D})$ contains two maximal strata, say $S_1$ and $S_2$, then  $\ol{S}_1$ and $\ol{S}_2$ are two distinct irreducible components of $\hat{p}(\hat{D})$, which is impossible since $\hat{p}(\hat{D})$ is irreducible.

Let $S$ be the unique maximal stratum in $\hat{p}(\hat{D})$. Then we have $\hat{p}(\hat{D})=\ol{S}$.
\end{proof}

\begin{Proposition}\label{prop:proj:div:part:codim:2}
Let $S$ be a stratum of $\ol{\Mod}_{0,n}$ of codimension at least $2$.
Let  $\{I_0,\dots,I_r\}$ be the partition of $\{1,\dots,n\}$ and $\T_S$  the dual graph associated to $S$.
The vertices of $\T_S$ are denoted by $v_0,\dots,v_r$, where $v_j$ is the vertex associated to $I_j$.
Assume that $\ol{S}=\hat{p}(\hat{D})$ for some irreducible component $\hat{D}$ of $\partial \blowupsp$.
Then up to renumbering of the subsets in $\{I_0,\dots,I_r\}$, we have
\begin{equation}\label{eq:part:bdry:div:codim:2}
\left\{\begin{array}{ll}
\sum_{i \in I_0}\mu_i < 1,&  \\
\sum_{i \in I_j}\mu_i > 1, & \text{ for all } j=1,\dots,r,
\end{array}
\right.
\end{equation}
and for every $j\in \{1,\dots,r\}$ there is an edge in $\T_S$ joining  $v_0$ to  $v_j$. Moreover, we have $\hat{D}=\ol{\hat{p}^{-1}(S)}$ and for all $\xx \in S, \; \hat{p}^{-1}(\xx)\simeq \CP^{r-1}$.

Conversely, assume that $\{I_0,\dots,I_r\}$ satisfies \eqref{eq:part:bdry:div:codim:2}, and every  edge of $\T_S$ joins $v_0$ to a vertex in $\{v_1,\dots,v_r\}$. Then $\ol{\hat{p}^{-1}(S)}$ is an irreducible component of $\partial \blowupsp$.
\end{Proposition}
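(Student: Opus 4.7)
The approach is to work locally near an arbitrary $\xx\in S$ in the chart $\widehat{\Uc}_\xx\subset\Uc_\xx\times\CP^{r_0-1}$ of Subsection~\ref{subsec:blowup:construct}, where by Corollary~\ref{cor:generators:I} the ideal $\Ic_{|\Uc_\xx}$ is generated by the $r_0$ principal monomials $t^{\beta^*_1},\dots,t^{\beta^*_{r_0}}$ attached to the principal subcurves of $C_\xx$. Since $\hat D$ is a divisor projecting onto $\overline S$ of codimension $r\geq 2$ and is not among the proper transforms of the boundary divisors $D_\Sc$ with $\Sc\in\Pc$, it must map into $\mathrm{supp}(\Ic)$, hence $r_0\geq 2$ by Proposition~\ref{prop:supp:I:princ:comp}. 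The irreducibility of $\hat D$ together with the dimension bookkeeping $\dim\hat D-\dim\overline S=(n-4)-(n-3-r)=r-1$ forces the general fibre of $\hat p|_{\hat D}:\hat D\to\overline S$ to have dimension $r-1$; combined with $\hat{p}^{-1}(\xx)\subset\CP^{r_0-1}$ this yields $r-1\leq\dim\hat{p}^{-1}(\xx)\leq r_0-1$, and in particular $r_0\geq r$.

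I then extract the star structure of $\T_S$ from this inequality. Let $z$ be the number of zero-weight nodes of $C_\xx$ and $\ell$ the number of non-principal subcurves; smoothening the zero-weight nodes yields $z+\ell+r_0=r+1$. The identity $\mu(y_{jj'})+\mu(y_{j'j})=2$ at every node prevents two distinct principal subcurves from being joined by a positive-weight node (both ``other-side'' weights would have to be strictly less than $1$), so in the smoothened curve every edge has at least one non-principal endpoint. If $\ell=0$ the smoothened curve has no edges, so $r_0=1$ and $r\leq 1$, contradicting $r\geq 2$. Hence $\ell\geq 1$, and the three constraints $z+\ell+r_0=r+1$, $\ell\geq 1$, $r_0\geq r$ tighten simultaneously to $\ell=1$, $z=0$, $r_0=r$. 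Consequently $C_\xx$ has a unique non-principal component (renumbered as $v_0$), no zero-weight nodes, and every edge of $\T_S$ is incident to $v_0$; that is, $\T_S$ is a star centred at $v_0$.

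The weight conditions follow immediately: each leaf $v_j$ ($j\geq 1$) is principal, and its unique attached node $\{0,j\}$ has other-side weight $\mu(y_{j0})=2-\mu(I_j)$; principality gives $\mu(I_j)>1$, and then $\mu(I_0)=2-\sum_{j\geq 1}\mu(I_j)<2-r\leq 0<1$. Conversely, starting from $\Sc$ satisfying \eqref{eq:part:bdry:div:codim:2} and $\T_S$ a star centred at $v_0$, every node $\alpha_k=\{0,k\}$ has positive weight $\mu_{\alpha_k}=\mu(I_k)-1$, so the principal subcurves are exactly the $r$ leaves $C^1_\xx,\dots,C^r_\xx$. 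A direct computation from \eqref{eq:def:beta} gives $\beta^*_{j,\alpha_k}=d(\mu(I_k)-1)$ for $k\neq j$ and $\beta^*_{j,\alpha_j}=0$. Setting $s_k:=t_{\alpha_k}^{d(\mu(I_k)-1)}$ and dividing by $\prod_k s_k$, the map $(t_{\alpha_k})\mapsto[t^{\beta^*_1}:\cdots:t^{\beta^*_r}]$ becomes $[s_1^{-1}:\cdots:s_r^{-1}]$, whose image as the $s_k$ range independently over $\C^*$ is the open torus of $\CP^{r-1}$; taking closures identifies $\hat{p}^{-1}(\xx)$ with all of $\CP^{r-1}$ for every $\xx\in S$.

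Putting everything together, $\hat{p}^{-1}(S)$ is a $\CP^{r-1}$-bundle over the connected submanifold $S$, hence irreducible of dimension $n-4$, so $\ol{\hat{p}^{-1}(S)}$ is an irreducible codimension-one closed subset contained in $\partial\blowupsp$, i.e., an irreducible component. In the forward direction, $\hat D\cap\hat{p}^{-1}(S)$ is a dense open subset of the irreducible $\hat D$ of dimension $n-4$ and is contained in the equidimensional irreducible $\hat{p}^{-1}(S)$, forcing $\hat D=\ol{\hat{p}^{-1}(S)}$. The main obstacle is the combinatorial tightening of paragraph two: converting the single dimension bound $r_0\geq r$ into the three simultaneous conclusions $\ell=1$, $z=0$, and every edge through $v_0$, which relies crucially on the impossibility of connecting two principal subcurves by a positive-weight node.
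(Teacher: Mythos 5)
Your proof is correct and follows essentially the same route as the paper: embed $\hat{p}^{-1}(\Ucal_\xx)$ in $\Ucal_\xx\times\CP^{r_0-1}$ via Corollary~\ref{cor:generators:I}, compare dimensions to force $r_0=r$, use the fact that two principal subcurves cannot be adjacent to get the star structure and the weight inequalities, and then compute the $\beta_j$'s explicitly for the converse to exhibit the $\CP^{r-1}$-fibers. Your counting identity $z+\ell+r_0=r+1$ and the torus-closure argument for the fibers merely make explicit steps the paper leaves terse, so there is no genuine difference in method.
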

\begin{proof}
Let $\xx$ be a point in $S$. Let $r_0$ be the number of principal subcurves  of $C_\xx$.
Since $C_\xx$ must have at least one non-principal component, we have $r_0 \leq r$.

By Corollary~\ref{cor:generators:I}, the ideal $\Ical$ is generated by $r_0$ functions $(t^{\beta^*_1},\dots,t^{\beta^*_{r_0}})$ in a neighborhood of $\xx$.
Thus by construction we have $\hat{p}^{-1}(\Ucal_\xx) \subset \Ucal_\xx\times \CP^{r_0-1}$, where $\Ucal_\xx$ is a neighborhood of $\xx$ as in \textsection~\ref{subsec:blowup:construct}.
In particular, we have $\hat{p}^{-1}(S)\subset S \times \CP^{r_0-1}$.
Since $\hat{D}\subset \hat{p}^{-1}(\ol{S})$ by Corollary~\ref{cor:str:proj:div:blowup}, we have that
$$
\dim{\hat{D}}=n-4  \leq \dim S+\dim \CP^{r_0-1},
$$
which implies
$$
n-4 \leq \dim S+ r_0-1=n-4-r+r_0.
$$
The condition $r_0 \leq r$ then implies that $r=r_0$, which means that $C_\xx$ has $r$ principal components and a unique non-principal component.
As usual, denote the irreducible components of $C_\xx$ by $C^0_\xx,\dots,C^r_\xx$ , where $C^j_\xx$ is the component corresponding to the vertex $v_j$ of $\T_S$, and $C^0_\xx$ is the unique non-principal component.
Since two principal subcurves cannot be adjacent, there must be a node between $C^0_\xx$ and $C^j_\xx$, for all $j\geq 1$.
Therefore the partition $\{I_0,\dots,I_r\}$ satisfies \eqref{eq:part:bdry:div:codim:2}.
Since in this case $\dim \hat{D}=\dim S+r-1$, the fibers $\hat{p}^{-1}(\xx)$ must be $\CP^{r-1}$ for all  $\xx \in S$. It follows that $\hat{D}=\ol{\hat{p}^{-1}(S)}$.

\medskip

For the converse, suppose that the partition $\{I_0,\dots,I_r\}$ satisfies \eqref{eq:part:bdry:div:codim:2}. For $j=1,\dots,r$, set $m_j=d(\mu(I_j)-1) \in \Z_{>0}$. Let $t_j,\; j=1,\dots,r$, be the coordinate function associated with the node between $C^0_\xx$ and  $C^j_\xx$. Then by definition
\begin{itemize}
\item $t^{\beta_0}=\prod_{j=1}^r t_j^{m_j}$,

\item $t^{\beta_k}=\left(\prod_{j=1}^r t_j^{m_j}\right)/t_k^{m_k}, \; k=1,\dots,r$.
\end{itemize}
Since $\Ical_{\Ucal_\xx}$ is generated by $\langle t^{\beta_1},\dots,t^{\beta_r}\rangle$, we have
$$
\widehat{\Ucal}_\xx=\hat{p}^{-1}(\Ucal_\xx)=\{(t,u,[v_1:\dots:v_r])\in \Ucal_\xx\times\CP^{r-1}, \; v_jt^{\beta_k}=v_kt^{\beta_j}, \; j,k=1,\dots,r\}.
$$
Note that the equation $v_jt^{\beta_k}=v_kt^{\beta_j}$ is actually equivalent to $v_jt_j^{m_j}=v_kt_k^{m_k}$.
It is straightforward to check that $\hat{p}^{-1}(\xx)\simeq \CP^{r-1}$ for all $\xx \in S$.
Hence $\hat{p}^{-1}(S)$ is a $\CP^{r-1}$-bundle over $S$. In particular $\dim \hat{p}^{-1}(S)=n-4$ and  $\ol{\hat{p}^{-1}(S)}$ is an irreducible component of $\partial \blowupsp$.
\end{proof}
\begin{Remark}\label{rk:fiber:of:blowup}\hfill
\begin{itemize}
\item[$\bullet$] In general, we do not have $\hat{D}=\hat{p}^{-1}(\ol{S})$.

\item[$\bullet$] It can be shown that for all $\xx \in \ol{\Mod}_{0,n}$, the fiber $\hat{p}^{-1}(\xx)$ is the projective space $\CP^{r_0-1}$, where $r_0$ is the number of principal subcurves of $C_\xx$.
\end{itemize}
\end{Remark}

\subsection*{Proof of Theorem~\ref{th:part:bdry:div:blowup}}
\begin{proof}
By Lemma~\ref{lm:irrd:comp:boundary:is:div}, every irreducible component of $\partial \blowupsp$ is a divisor.
Conider  an irreducible component $\hat{D}$ of $\partial\blowupsp$.
By Proposition~\ref{prop:proj:div:part:codim:2}, there is a  the stratum  $S$ of $\partial \ol{\Mod}_{0,n}$ such that $\hat{D}=\ol{\hat{p}^{-1}(S)}$. Let $r$ be the codimension of $S$ in $\ol{\Mod}_{0,n}$. If $r=1$, then $S$ is the set of generic points in a boundary divisor in $\partial \ol{\Mod}_{0,n}$, which corresponds to a partion $\{I_0,I_1\} \in \Pcal$. In this case $\hat{D}$ is the proper transform of the corresponding boundary divisor of $\ol{\Mod}_{0,n}$. It is clear that the correspondence $\hat{D} \mapsto \{I_0,I_1\}$ is one-to-one.

If $r>1$, then it follows from Proposition~\ref{prop:proj:div:part:codim:2} that
the partition $\{I_0,I_1,\dots,I_r\}$ associated to $S$ satisfies the required condition.
The correspondence $\hat{D} \mapsto \{I_0,\dots,I_r\}$ is also one-to-one because given $\{I_0,\dots,I_r\}$, there is a unique configuration of the dual graph such that the curves parametrized by $S$ has exactly $r$ principle components.
\end{proof}


\section{Geometric interpretation}\label{sec:geom:interpret}
\subsection{Differentials associated to points in the blow-up}\label{subsec:ddiff:assoc:blowup}
We now give the geometric interpretation of the points in $\blowupsp$.
Let $\xx$ be a point in  $\ol{\Mod}_{0,n}$, and $\Ucal_\xx$ a neighborhood of $\xx$ satisfying the conditions (i) and (ii) in \textsection\ref{subsec:blowup:construct}.
In what follows, to simplify the notation, we will write $\Ucal$ and $\widehat{\Ucal}$ instead of $\Ucal_\xx$ and $\widehat{\Ucal}_\xx$.
Recall that from Proposition~\ref{prop:Kaw:trivial:ratio} we have $(r+1)$ holomorphic sections $t^{\beta_0}\Phi_0,\dots,t^{\beta_r}\Phi_r$ of the line bundle $\Kc^{(d)}_{0,n}$ over $\ol{\Cc}_{0,n|\Ucal}$.
By multiplying $\Phi_1,\dots,\Phi_{r}$ by some non-vanishing holomorphic functions on $\Ucal$, we can actually write
\begin{equation}\label{eq:Kaw:sect:equal}
t^{\beta_0}\Phi_0=\dots=t^{\beta_{r-1}}\Phi_{r-1}=t^{\beta_r}\Phi_r.
\end{equation}
For every $\xx'\in \Ucal$, denote by $\Phi_j(\xx')$ the restriction of $\Phi_j$ to the curve $C_{\xx'}$.
Given $v\in \C^{r+1}\setminus\{0\}$, we denote by $[v]\in \CP^r$ the line generated by $v$.
Consider a vector $v=(v_0,\dots,v_r)\in \C^{r+1}\setminus\{0\}$ such that $(\xx', [v]) \in \hat{p}^{-1}(\{\xx'\})$.
Assume that $\xx'\in \Ucal^*:=\Ucal\cap\Mod_{0,n}$ (that is $t_\alpha(\xx')\neq 0$ for all $\alpha \in \NN(\xx)$).
In this case, we have
$$
\hat{p}^{-1}(\{\xx'\})=(\xx',[t^{\beta_0}(\xx'):\dots:t^{\beta_r}(\xx')]) \in \Ucal\times\CP^{r}.
$$
Hence $v=\lambda\cdot(t^{\beta_0}(\xx'),\dots,t^{\beta_r}(\xx'))$ for some $\lambda\in \C^*$.
Therefore, $v_0\Phi_0(\xx')=\dots=v_r\Phi_r(\xx')$, which means that  all the elements in the family $\{v_0\Phi_0(\xx'),\dots,v_r\Phi_r(\xx')\}$ give the same $d$-differential  on $C_{\xx'}$.
Thus to every element $(\xx', [v])\in \hat{p}^{-1}(\Ucal^*)$ we have an associated  $d$-differential on $C_{\xx'}$ determined up to multiplication by a scalar in $\C^*$, that is an element of $\Pb\stratesp$.

\medskip

More generally, for all $\xx'\in \Ucal$, let $C^0_{\xx'},\dots,C^{r'}_{\xx'}$ denote the irreducible components of $C_{\xx'}$.
Associated to $C_{\xx'}$ we have a partition $\{J_0,\dots,J_{r'}\}$ of the set $\{0,\dots,r\}$ defined as follows: let $f: C_{\xx'} \ra C_\xx$ be a (surjective) map that pinches some simple closed curves in $C_{\xx'}$ into nodes in $C_\xx$. Then for all $k\in\{0,\dots,r'\}$, $J_k$ is the set of indexes $i\in \{0,\dots,r\}$ such that $f(C^k_{\xx'})$ contains $C^i_\xx$.
Note that the partition $\{J_0,\dots,J_{r'}\}$ depends only on the stratum of $\xx'$ in $\ol{\Mod}_{0,n}$.

\medskip

For all $i,j \in \{0,\dots,r\}$, by definition, we must have $v_it^{\beta_j}=v_jt^{\beta_i}$ and $t^{\beta_i}\Phi_i=t^{\beta_j}\Phi_j$. But since $t^{\beta_i}(\xx')$ and $t^{\beta_j}(\xx')$ can vanish, $v_i\Phi_i(\xx')$ may not be equal to $v_j\Phi_j(\xx')$ in general. However, we have
\begin{Lemma}\label{lm:differentials:on:cpnts}
If $i,j\in J_k$ for some $k\in \{0,\dots,r'\}$, then $v_i\Phi_i(\xx')$ and $v_j\Phi_j(\xx')$ give the same $d$-differential on the component $C^k_{\xx'}$.
\end{Lemma}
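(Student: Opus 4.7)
The plan is to combine the renormalized identity $t^{\beta_i}\Phi_i = t^{\beta_j}\Phi_j$ from \eqref{eq:Kaw:sect:equal} with the observation that the monomial $t^{\beta_i - \beta_j}$ is a unit at $\xx'$ for $i,j \in J_k$, and then to match the resulting ratio of $d$-differentials with the ratio of $v_i$ and $v_j$ forced by the closure definition of $\widehat{\Uc}$.

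To analyze $t^{\beta_i - \beta_j}$, I observe that by \eqref{eq:def:beta} the exponent $\beta_{i,\alpha} - \beta_{j,\alpha}$ is non-zero exactly at nodes $\alpha \in \NN(\xx)$ separating $C^i_\xx$ from $C^j_\xx$, i.e.\ at those lying on the unique path from $v_i$ to $v_j$ in the dual tree $\T_\xx$. Since $i,j \in J_k$, both $C^i_\xx$ and $C^j_\xx$ are contained in the subcurve $f(C^k_{\xx'})$ of $C_\xx$, so every node on this path has been smoothed in passing from $C_\xx$ to $C_{\xx'}$; equivalently, $t_\alpha(\xx') \neq 0$ for every $\alpha$ appearing in $t^{\beta_i - \beta_j}$. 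Hence $t^{\beta_i - \beta_j}$ extends to a non-vanishing holomorphic function near $\xx'$, with $t^{\beta_i - \beta_j}(\xx') \in \C^*$. Rewriting \eqref{eq:Kaw:sect:equal} as $\Phi_j = t^{\beta_i - \beta_j} \Phi_i$ and restricting to $C^k_{\xx'}$, where by Lemma~\ref{lm:univ:curv:geom:codim2}(ii) both $z_i$ and $z_j$ give global coordinates, I obtain the identity of meromorphic $d$-differentials
\begin{equation*}
\Phi_j(\xx')|_{C^k_{\xx'}} = t^{\beta_i - \beta_j}(\xx') \cdot \Phi_i(\xx')|_{C^k_{\xx'}}.
\end{equation*}

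It then remains to show that the projective coordinates satisfy $v_i = t^{\beta_i - \beta_j}(\xx') \cdot v_j$. Since $\widehat{\Uc}$ is by construction the closure in $\Uc \times \CP^r$ of $\widehat{\Uc}^* = \{(\xx'', [t^{\beta_0}(\xx''):\dots:t^{\beta_r}(\xx'')]) : \xx'' \in \Uc^*\}$, every $(\xx',[v]) \in \hat{p}^{-1}(\xx')$ is the limit of a sequence $(\xx^{(n)}, [v^{(n)}])$ with $\xx^{(n)} \in \Uc^*$; after choosing scalars $\lambda_n \in \C^*$, one may arrange $v^{(n)} := \lambda_n\left(t^{\beta_0}(\xx^{(n)}), \dots, t^{\beta_r}(\xx^{(n)})\right) \to v$ in $\C^{r+1}\setminus\{0\}$. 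Using $t^{\beta_i}(\xx^{(n)}) = t^{\beta_j}(\xx^{(n)}) \cdot t^{\beta_i - \beta_j}(\xx^{(n)})$ together with the continuity (indeed holomorphy) of $t^{\beta_i - \beta_j}$ at $\xx'$, passing to the limit yields $v_i = v_j \cdot t^{\beta_i - \beta_j}(\xx')$. Combining with the previous display produces
\begin{equation*}
v_j \Phi_j(\xx')|_{C^k_{\xx'}} = v_j \cdot t^{\beta_i - \beta_j}(\xx') \cdot \Phi_i(\xx')|_{C^k_{\xx'}} = v_i \Phi_i(\xx')|_{C^k_{\xx'}},
\end{equation*}
which is the desired equality.

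The main subtlety will be in the last step: the equations $v_a t^{\beta_b} = v_b t^{\beta_a}$ cutting out the ambient variety $\Uc^\#$ do not by themselves force $v_i = t^{\beta_i-\beta_j}(\xx') v_j$ when both $t^{\beta_i}(\xx')$ and $t^{\beta_j}(\xx')$ vanish, which is a genuine possibility for $i,j \in J_k$ when some node $\alpha$ off the path from $v_i$ to $v_j$ has $t_\alpha(\xx') = 0$ and contributes positively to both $\beta_i$ and $\beta_j$. The stronger constraint comes precisely from restricting to the irreducible component $\widehat{\Uc} \subset \Uc^\#$, i.e.\ from the closure condition, combined with the holomorphic extendability of $t^{\beta_i-\beta_j}$ across $\xx'$.
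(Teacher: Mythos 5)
Your proposal is correct and follows essentially the same route as the paper: observe that $t^{\beta_i-\beta_j}$ only involves the parameters $t_\alpha$ of nodes separating $C^i_\xx$ from $C^j_\xx$, which are all non-zero at $\xx'$ since $i,j\in J_k$, then use a sequence in $\hat{p}^{-1}(\Ucal^*)$ coming from the closure definition of $\widehat{\Ucal}$ to identify the ratio $v_i/v_j$ (in your version, the homogeneous relation $v_i=v_j\,t^{\beta_i-\beta_j}(\xx')$) with the value of $t^{\beta_i-\beta_j}$ at $\xx'$, and combine with $\Phi_j=t^{\beta_i-\beta_j}\Phi_i$ restricted to $C^k_{\xx'}$. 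The only cosmetic difference is that you work with a normalized lift in $\C^{r+1}\setminus\{0\}$ instead of assuming $v_j\neq 0$ as the paper does, which handles the degenerate case $(v_i,v_j)=(0,0)$ uniformly.
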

\begin{proof}
We first remark that the function $t^{\beta_i-\beta_j}:=t^{\beta_i}/t^{\beta_j}$ involves only $t_\alpha$ where the node $\alpha$ separates $C^i_\xx$ and $C^j_\xx$.
In $C_{\xx'}$ these nodes are smoothened, which means that $t_\alpha(\xx')\neq 0$ for all $\alpha$'s that separate $C^i_\xx$ and $C^j_\xx$.
Therefore, $t^{\beta_i-\beta_j}(\xx')\neq 0$. In particular, $t^{\beta_i-\beta_j}$ is a well defined holomorphic function which does not vanish in a neighborhood of $\xx'$.

Obviously, we only need to consider the case $(v_i,v_j)\neq (0,0)$. Without loss of generality, we can suppose that $v_j\neq 0$. By definition, there is a sequence $\{(\xx'_s,[v^{(s)}])\}_{s\in\N} \subset \hat{p}^{-1}(\Ucal^*)$, where $v^{(s)}=(v^{(s)}_0,\dots,v^{(s)}_r)$, converging to $(\xx',[v])$.
Since $\xx'_s\in \Ucal^*$, we have $v^{(s)}_i/v^{(s)}_j=t^{\beta_i}(\xx'_s)/t^{\beta_j}(\xx'_s)=t^{\beta_i-\beta_j}(\xx'_s)$.
It follows that
$$
\frac{v_i}{v_j}=\lim_{s\to\infty}\frac{v^{(s)}_i}{v^{(s)}_j}=\lim_{s\to \infty}t^{\beta_i}(\xx'_s)/t^{\beta_j}(\xx'_s) =t^{\beta_i-\beta_j}(\xx').
$$
Thus we have $\Phi_j(\xx')/\Phi_i(\xx')=t^{\beta_i-\beta_j}(\xx')=\frac{v_i}{v_j}$, which implies that  $v_i\Phi_i(\xx')=v_j\Phi_j(\xx')$.
We now observe that  the restrictions of $\Phi_i(\xx')$ and $\Phi_j(\xx')$  to the component $C^k_{\xx'}$ give non-zero $d$-differentials.
This is  because the restrictions of  the projections $\pi_i$ and $\pi_j$ to $C^k_{\xx'}$ give two isomorphisms from $C^k_{\xx'}$ onto $\CP^1$ (see Lemma~\ref{lm:univ:curv:geom:codim2}), and  $\Phi_i$ and $\Phi_j$ are the pullbacks   of  non-vanishing $d$-differentials on $\CP^1$ by $\pi_i$ and $\pi_j$ respectively.
Therefore, $v_i\Phi_i(\xx')$ and $v_j\Phi_j(\xx')$ give the same $d$-differential on $C^k_{\xx'}$.
\end{proof}

Denote by $q_k(\xx',v)$ the $d$-differential given by $v_j\Phi_j(\xx')$ on $C^k_{\xx'}$ for some $j\in J_k$.
By Lemma~\ref{lm:differentials:on:cpnts} $q_k(\xx',v)$ is independent of the choice of $j\in J_k$.
Thus for every $(\xx',[v]) \in \widehat{\Ucal}$, we can associate to $(\xx',[v])$ the tuple $(q_0(\xx',v),\dots,q_{r'}(\xx',v))$ of $d$-differentials on the components of $C_{\xx'}$, which are determined up to simultaneous multiplication by the same scalar in $\C^*$.

\subsection{Embedding into $\Pb\ol{\Hcal}^{(d)}_{0,n}$}\label{sec:proj:mer:ddiff}
Recall that we have a natural section $\ff: \Mod_{0,n} \ra \Pb\Hcal^{(d)}_{0,n}$ which associates to $\xx \sim  (\CP^1,x_1,\dots,x_n)$ the complex line generated by the $d$-differential $\prod_{i=1}^n(z-x_i)^{k_i}(dz)^d$ viewed as element of $H^0(C_\xx, \Kcal^{(d)}_{0,n|C_{\xx}})$.
By definition,  $\ff(\Mod_{0,n})=\Pb\stratesp$.
From what we have seen, the correspondence $(\xx',[v]) \mapsto \C^*\cdot(q_0(\xx',v),\dots,q_{r'}(\xx',v))$ provides us with a map
$\hat{\ff}_{\widehat{\Ucal}}: \widehat{\Ucal} \ra \Pb\ol{\Hcal}^{(d)}_{0,n|\Ucal}$.
We now show
\begin{Theorem}\label{th:inject:blowup:proj:bdl}
The maps $\hat{\ff}_{\widehat{\Ucal}}$ are proper injective morphisms of algebraic varieties and patch together to give an injective morphism $\hat{\ff}:\blowupsp \to \Pb\ol{\Hcal}^{(d)}_{0,n}$ extending the section $\ff: \Mod_{0,n} \to \Pb\Hcal^{(d)}_{0,n}$.
\end{Theorem}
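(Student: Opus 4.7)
\medskip

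\noindent\textbf{Proof proposal.} The overall strategy is a standard ``blow-up resolves indeterminacy'' argument, tailored to the explicit local picture set up in \S\ref{sec:embed:Mod:0:n}. On $\Mod_{0,n}$ the section $\ff$ is given by the meromorphic section $\Phi_j$ of $\Kc^{(d)}_{0,n}$, for any $j$. By Proposition~\ref{prop:Kaw:trivial:ratio}, on $\Ucal$ the sections $\Phi_0,\dots,\Phi_r$ only differ by invertible functions times ratios of the monomials $t^{\beta_j}$, which exhibit the common zero locus of the $t^{\beta_j}$ as the indeterminacy of the would-be extension of $\ff$. By construction, this indeterminacy is resolved after blowing up the ideal $\Ic$ generated by these monomials, so the plan is to produce $\hat{\ff}_{\widehat{\Ucal}}$ by exhibiting, on each affine chart $\widehat{\Ucal}_j:=\widehat{\Ucal}\cap\{v_j\neq 0\}$, a morphism to $\Pb\ol{\Hc}^{(d)}_{0,n|\Ucal}$ sending $(\xx',[v])$ to the line spanned by the $d$-differential $q(\xx',v)$ described in \S\ref{subsec:ddiff:assoc:blowup}.

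\medskip

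Concretely, on the chart $\widehat{\Ucal}_j$ one has regular functions $w_i=v_i/v_j$ satisfying $w_i\cdot t^{\beta_j}=t^{\beta_i}$. Combined with the identity $t^{\beta_i}\Phi_i=t^{\beta_j}\Phi_j$ of Proposition~\ref{prop:Kaw:trivial:ratio}, this forces $\Phi_i=w_i\,\Phi_j$ as meromorphic sections after pullback to the family over $\widehat{\Ucal}_j$, so that $\Phi_j$ generates the same line as every $\Phi_i$ up to the regular factor $w_i$. I would then argue that the pullback of $\Phi_j$ to the pullback family over $\widehat{\Ucal}_j$ is holomorphic and restricts fiberwise to a \emph{non-zero} section of $\Kc^{(d)}_{\xx'}$: non-vanishing is precisely the content of Lemma~\ref{lm:differentials:on:cpnts} combined with the fact that, for any $k$ with $j\in J_k$, $\pi_j$ restricts to an isomorphism $C^k_{\xx'}\to\CP^1$ and $\Phi_j(\xx')$ is the pullback of a non-vanishing $d$-differential on $\CP^1$. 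This yields a morphism $\hat{\ff}_{\widehat{\Ucal}_j}:\widehat{\Ucal}_j\to \Pb\ol{\Hc}^{(d)}_{0,n|\Ucal}$, and the relations $w_{j'}\Phi_j=\Phi_{j'}$ show that $\hat{\ff}_{\widehat{\Ucal}_j}$ and $\hat{\ff}_{\widehat{\Ucal}_{j'}}$ agree on $\widehat{\Ucal}_j\cap\widehat{\Ucal}_{j'}$, giving $\hat{\ff}_{\widehat{\Ucal}}$. Over $\Ucal\cap\Mod_{0,n}$ one recovers the original formula for $\ff$.

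\medskip

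For injectivity I would argue in two steps. First, since $\hat{p}$ factors as $\hat{\ff}_{\widehat{\Ucal}}$ followed by the bundle projection $\Pb\ol{\Hc}^{(d)}_{0,n}\to\ol{\Mod}_{0,n}$, two points with the same image share the same underlying $\xx'\in\ol{\Mod}_{0,n}$. Second, in a common fiber $\hat{p}^{-1}(\xx')$, the line $[q(\xx',v)]$ determines, for each component $C^k_{\xx'}$ and each $j\in J_k$, the value of $v_j$ up to a scalar common to all $k$: indeed $q_k(\xx',v)=v_j\Phi_j(\xx')_{|C^k_{\xx'}}$ and $\Phi_j(\xx')_{|C^k_{\xx'}}\neq 0$, so $v_j$ is read off from $q_k$ up to that scalar, and hence $[v]$ is recovered. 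Properness of $\hat{\ff}_{\widehat{\Ucal}}$ is automatic because $\widehat{\Ucal}\subset\Ucal\times\CP^r$ is projective over $\Ucal$ and so is $\Pb\ol{\Hc}^{(d)}_{0,n|\Ucal}$, so any $\Ucal$-morphism between them is proper. Finally, the local maps $\hat{\ff}_{\widehat{\Ucal}_\xx}$ glue to a global morphism $\hat{\ff}:\blowupsp\to\Pb\ol{\Hc}^{(d)}_{0,n}$ because the recipe $(\xx',[v])\mapsto [q(\xx',v)]$ is intrinsic to the pointed nodal curve together with the line $[v]$; Remark~\ref{rk:blowup:independ:gen:set} guarantees that on overlaps of the affine patches $\Ucal_\xx,\Ucal_{\xx'}$, the two charts of the blow-up are canonically identified, and the pointwise formulas match.

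\medskip

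The step I expect to be the main obstacle is rigorously checking that $q(\xx',v)$ is indeed a global section of $\Kc^{(d)}_{\xx'}$ on the nodal curve $C_{\xx'}$, i.e. that the pieces $v_j\Phi_j(\xx')|_{C^k_{\xx'}}$ glue across the nodes of $C_{\xx'}$ with the correct matching of vanishing orders and (for $d=1$) residues. This requires extending the local pole/zero computations of \S\ref{subsec:ratio:sect:rel:can} (especially equation~\eqref{eq:phi:near:node}) to the pulled-back family over $\widehat{\Ucal}_j$ and verifying that, along the new exceptional loci introduced by the blow-up, the monomials $w_i$ exactly compensate the factors $t_\alpha^{d\mu_\alpha}$ that appear at each node; once this compatibility is in place, the morphism, injectivity and properness follow cleanly from the structure above.
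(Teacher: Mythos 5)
Your overall route is the same as the paper's: work on the charts $\{v_i\neq 0\}$ of $\widehat{\Ucal}$, use the relations $v_it^{\beta_j}=v_jt^{\beta_i}$ together with $t^{\beta_i}\Phi_i=t^{\beta_j}\Phi_j$ from Proposition~\ref{prop:Kaw:trivial:ratio} to get $\Phi_i=(v_j/v_i)\Phi_j$, check fiberwise non-vanishing via Lemma~\ref{lm:univ:curv:geom:codim2}, and conclude injectivity, properness and gluing exactly as in the paper (your properness-by-projectivity and your recovery of $[v]$ from the tuple of restrictions are fine, and are what the paper dismisses as ``straightforward'').

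The genuine gap is that you leave open precisely the step that carries the proof, namely the holomorphy of (the pullback of) $\Phi_i$ over the whole chart $\{v_i\neq 0\}$ of the pulled-back family $\widehat{\Ccal}_{\widehat{\Ucal}}=\widehat{\Ucal}\times_{\Ucal}\ol{\Ccal}_{\Ucal}$, including over the exceptional locus -- and you predict it will require extending the pole/zero computations of \textsection\ref{subsec:ratio:sect:rel:can} and checking node-by-node compensation. No new computation is needed: the proof of Proposition~\ref{prop:Kaw:trivial:ratio} already establishes the pointwise statement that near every point $(z^0,t^0,u^0)\in\ol{\Cc}_\Uc$ \emph{some} $\Phi_j$ is a holomorphic section of $\Kc^{(d)}_{0,n}$ (this is \eqref{eq:phi:near:node} and the surrounding case analysis), and on the chart $\{v_i\neq0\}$ one then writes $\Phi_i=(v_j/v_i)\Phi_j$ with $v_j/v_i$ regular, so $\Phi_i$ is holomorphic there. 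Once this is known, the fiberwise gluing across the nodes of $C_{\xx'}$ that worries you (and any residue-type matching) is automatic, since the restriction of a holomorphic section of $p_2^*\Kc^{(d)}_{0,n}$ to a fiber is by definition an element of $H^0(C_{\xx'},\Kc^{(d)}_{\xx'})$. The remaining point you gloss over is how a family of fiberwise differentials becomes a morphism to $\Pb\ol{\Hcal}^{(d)}_{0,n}$: the paper uses the vanishing \eqref{eq:R1:vanish} and cohomology-and-base-change to identify $p_{1*}p_2^*\Kcal^{(d)}_{0,n}\sim\hat{p}^*\ol{\Hcal}^{(d)}_{0,n}$, so that $\Phi_i$ induces an honest section of $\hat{p}^*\Pb\ol{\Hcal}^{(d)}_{0,n}$ over the chart; some such argument is needed to make your ``this yields a morphism'' precise.
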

\begin{proof}
We will define the morphism $\hat{\ff}$ via a section of the projective bundle $\hat{p}^*\Pb\ol{\Hc}^{(d)}_{0,n}$ over $\blowupsp$. To this purpose, we consider the variety $\widehat{\Ccal}_{\widehat{\Ucal}}:=\widehat{\Ucal}\times_{\Ucal}\ol{\Ccal}_{\Ucal}$, which is the pullback of the family $\ol{\Ccal}_{\Ucal}$ to $\widehat{\Ucal}$.
By the construction of \textsection\ref{subsec:univ:curv:near:bdry}, $\widehat{\Ccal}_{\widehat{\Uc}}$ is the subvariety of $(\CP^1)^{r+1}\times\widehat{\Uc}$ which is defined by the equations \eqref{eq:mult:nodes:fam:curv}.
We have the following commutative diagram
\begin{center}
 \begin{tikzpicture}[scale=0.3]
    \node (A) at (0,4) {$\widehat{\Ccal}_{\widehat{\Ucal}}$};
    \node (B) at (6,4) {$\ol{\Ccal}_{\Ucal}$};
    \node (C) at (0,0) {$\widehat{\Ucal}$};
    \node (D) at (6,0) {$\Ucal$};

    \path[->, font=\scriptsize, >= angle 60] (A) edge node[above]{$p_2$} (B);
    \path[->, font=\scriptsize, >= angle 60]
    (A) edge node[left]{$p_1$} (C)
    (C) edge node[above]{$\hat{p}$} (D)
    (B) edge node[right]{$p$} (D);
 \end{tikzpicture}
\end{center}
Since $\Kcal^{(d)}_{0,n|C_{\xx'}}\sim \Kcal^{(d)}_{\xx'}$ for all $\xx'\in \Ucal$, and $\dim H^1(C_{\xx'},\Kc^{(d)}_{\xx'})=0$ (cf. \eqref{eq:R1:vanish}), by Cohomology and Base Change Theorem (see \cite[Chap.III, Th.12.11]{Hart} or \cite[Chap. 9, Prop.3.3]{ACG2011}) we have
$$
p_{1*}p^*_2\Kcal^{(d)}_{0,n}\sim \hat{p}^*p_*\Kcal^{(d)}_{0,n}\sim \hat{p}^*\ol{\Hcal}^{(d)}_{0,n}
$$
which means that the sections of $\hat{p}^*\ol{\Hc}^{(d)}_{0,n}$ on $\widehat{\Ucal}$ correspond to sections of $p^*_2\Kcal^{(d)}_{0,n}$ on $\widehat{\Ccal}_{\widehat{\Ucal}}$.

For $i=0,\dots,r$, set
$$
\widehat{\Ucal}^i:=\{(t,u,[v])\in \widehat{\Ucal},  v_i\neq 0\}, \; \text{ and } \widehat{\Ccal}^i_{\widehat{\Ucal}}:=p^{-1}_1(\widehat{\Ucal}^i).
$$
For all $j\in \{0,\dots,r\}$, on $\widehat{C}_{\widehat{\Uc}}$ we have
$1=(t^{\beta_j}\Phi_j)/(t^{\beta_i}\Phi_i)=(v_j/v_i)\cdot (\Phi_j/\Phi_i)$,
or equivalently $\Phi_i=(v_j/v_i)\cdot\Phi_j$.
Note that $v_j/v_i$ is a regular function on $\widehat{\Uc}^i$. We claim that $\Phi_i$ defines a holomorphic section of the line bundle $p^*_2\Kcal^{(d)}_{0,n}$ over $\widehat{\Cc}^i_{\widehat{\Uc}}$.
Indeed, in the proof of Proposition~\ref{prop:Kaw:trivial:ratio} we actually showed that for every point $(z^0,t^0,u^0) \in \ol{\Cc}_\Uc$, there is some $j \in \{0,\dots,r\}$ such that $\Phi_j$ is a holomorphic section of $\Kc^{(d)}$ in a neighborhood of $(z^0,t^0,u^0)$. This implies that $\Phi_j$ gives a holomorphic section of $p_2^*\Kc^{(d)}$ in a neighborhood of $(z^0,t^0,u^0,[v^0])$ for all $(t^0,u^0, [v^0]) \in \hat{p}^{-1}(t^0,u^0)$.
If $(z^0,t^0,u^0, [v^0]) \in \widehat{\Cc}^i_{\widehat{\Uc}}$ then $\Phi_i=\frac{v_j}{v_i}\Phi_j$ is also a holomorphic section of $p^*_2\Kc^{(d)}_{0,n}$ near $(z^0,t^0,u^0,[v^0])$ since $v_j/v_i$ is a regular function on $\widehat{\Uc}^i$. The claim is then proved.

By definition, $\Phi_i$ gives a  holomorphic section of $p_{1*}p_2^*\Kcal^{(d)}_{0,n}\sim \hat{p}^*\Hc^{(d)}_{0,n}$ over $\widehat{\Ucal}^i$.
This section assigns to the point $(\xx',[v]) \in \widehat{\Ucal}^i$ the $d$-differential $\Phi_i(\xx')$ on $C_{\xx'}$.
We now remark that $\Phi_i(\xx')$ does not vanish identically on $C_{\xx'}$, which means that $\Phi_i(\xx')\neq 0 \in \ol{\Hc}^{(d)}_{0,n,\xx'}$.
Thus by taking the image in the associated projective bundle, we get a section $\hat{\Psi}_i$ of $\hat{p}^*\Pb\ol{\Hcal}^{(d)}_{0,n}$ over $\widehat{\Ucal}^i$.
For all $i,j \in \{0,\dots,r\}$, the sections $\hat{\Psi}_i$ and $\hat{\Psi}_j$ coincide in the intersection $\widehat{\Ucal}^i \cap \widehat{\Ucal}^j$.
Thus we obtain a map $\hat{\Psi}: \widehat{\Ucal} \ra \hat{p}^*\Pb{\Hcal}^{(d)}_{0,n|\Ucal}$.
By definition the map $\hat{\ff}_{\widehat{\Ucal}}$ is the composition $\hat{q}\circ\hat{\Psi}$, where $\hat{q}: \hat{p}^*\Pb\ol{\Hcal}^{(d)}_{0,n|\Ucal} \ra \Pb\ol{\Hcal}^{(d)}_{0,n|\Ucal}$ is the natural projection.
It follows that $\hat{\ff}_{\widehat{\Ucal}}$ is a morphism of algebraic varieties.

It is straightforward to check that $\hat{\ff}_{\widehat{\Ucal}}$ is injective.
Since $\blowupsp$ is the blow-up along the sheaf of ideals $\Ical$, the maps $\hat{\ff}_{\widehat{\Ucal}}$ patch together to give an embedding of $\blowupsp$ into $\Pb\ol{\Hcal}^{(d)}_{0,n}$ whose restriction to $\Mod_{0,n}$  equals  $\ff$.
\end{proof}

\subsection{Proof of Theorem~\ref{th:IVC:is:blowup}}\label{sec:prf:th:IVC:is:blowup}
\begin{proof}
The first assertion of Theorem~\ref{th:IVC:is:blowup} is the content of Theorem~\ref{th:ideal:sheaf:def}. That $\clpstratesp$ is isomorphic to $\blowupsp$ follows readily from Theorem~\ref{th:inject:blowup:proj:bdl}.  Specifically, since $\Mod_{0,n}$ is dense in $\blowupsp$ and $\blowupsp$ is compact, $\hat{\ff}(\blowupsp)$ equals the closure of $\hat{\ff}(\Mod_{0,n})=\Pb\stratesp$ in $\Pb\ol{\Hcal}^{(d)}_{0,n}$, that is $\hat{\ff}(\blowupsp)=\clpstratesp$.
Since $\hat{\ff}$ is an embedding, we conclude that $\blowupsp$ is isomorphic to $\clpstratesp$.
The last assertion has been proved in Theorem~\ref{th:part:bdry:div:blowup}.
\end{proof}
\section{The exceptional divisor of the blow-up}\label{sec:except:div}
Let $\xx$ be a point in $\ol{\Mod}_{0,n}$.
Let $\Uc_\xx\simeq \Delta^{\NN(\xx)}\times U$ and $(t_\alpha)_{\alpha \in \NN(\xx)}$ be as in \textsection \ref{subsec:univ:curv:near:bdry}.
%
%
Recall that $\widehat{\Uc}_\xx$ is covered by the open subsets
$$
\widehat{\Uc}_\xx^k:=\{(t,u,[v_0:\dots:v_r]) \in \widehat{\Uc}_\xx, \; v_k\neq 0\}, \qquad k=0,\dots,r.
$$
By definition,  $\hat{p}^{-1}\cdot\Ic_{\widehat{\Uc}_\xx}$ is  generated by $\langle t^{\beta_1},\dots,t^{\beta_r}\rangle$ on $\widehat{\Ucal}_\xx$.
In  $\widehat{\Uc}_\xx^k$,  we can write $t^{\beta_j}=\frac{v_j}{v_k}t^{\beta_k}$, which means that
$\hat{p}^{-1}\cdot\Ic_{\widehat{\Uc}^k_\xx}$ is generated by $t^{\beta_k}$. For $j\neq k$, we have $t^{\beta_k}/t^{\beta_j}=v_k/v_j$ that is an invertible regular function on $\widehat{\Uc}_\xx^j\cap \widehat{\Uc}_\xx^k$. Thus the data $(\widehat{\Uc}^k_\xx, t^{\beta_k})$ define a Cartier divisor $\Ec$ on $\blowupsp$ which satisfies $\Ec\sim \hat{p}^{-1}\Ic$.  This Cartier divisor is called the exceptional divisor of $\blowupsp$.

Our goal now is to determine the Weil divisor  associated to $\Ec$. To this purpose, we need to compute the vanishing order of the functions defining $\Ec$ along the components of $\partial\blowupsp$.
Consider a partition $\Sc=\{I_0,\dots,I_r\}\in \hat{\Pc}(\mu)$.
Let $\xx$ be a point in the stratum $D^*_\Sc\subset \ol{\Mod}_{0,n}$ associated to $\Sc$.
Let $t_j, \; j=1,\dots,r$, be the  coordinate function on $\Uc_\xx$ which corresponds to the node between $C^j_\xx$ and $C^0_\xx$.
Set $m_j:=d(\mu(I_j)-1) \in \Z_{\geq 0}$, and define
\begin{equation}\label{def:mult:bdry:div:P1}
m(\Sc):=\prod_{j=1}^rm_j=d^r\cdot \prod_{j=1}^r(\mu(I_j)-1).
\end{equation}
Note that by definition, we have
\begin{itemize}
\item $t^{\beta_0}=\prod_{j=1}^rt_j^{m_j}$,

\item $t^{\beta_k}=\frac{\prod_{j=1}^rt^{m_j}}{t_k^{m_k}}$, for $k=1,\dots,r$.
\end{itemize}
Since $t^{\beta_0}=t_1^{m_1}t^{\beta_1}$, we get that $\Ic_{\Uc}=\langle t^{\beta_1},\dots,t^{\beta_r}\rangle$. Hence $\widehat{\Uc}_\xx$ is isomorphic to the irreducible component of
$$
\{(t,u,[v_1:\dots:v_r]) \in \Uc_\xx\times\Pb^{r-1}_\C, \; v_jt^{\beta_k}=v_kt^{\beta_j}, \; \forall j,k\in\{1,\dots,r\}\} \subset \Uc_\xx\times\Pb^{r-1}_\C
$$
which contains the subset $\{(t,u,[t^{\beta_1}:\dots:t^{\beta_r}]), \; (t,u)\in (\Delta^*)^r\times U\}$.
Note that in $\widehat{\Uc}_\xx$, $\hat{D}_\Sc$ is defined by the equations $t_1=\dots=t_r=0$.

\begin{Lemma}\label{lm:vanish:ord:bdry:div:1}
For all $j\in\{1,\dots,r\}$, the vanishing order of $t_j$ along $\hat{D}_\Sc$ is equal to $\frac{\prod_{i=1}^r m_i}{m_j}$.
\end{Lemma}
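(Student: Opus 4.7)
The plan is to compute $\mathrm{ord}_{\hat{D}_\Sc}(t_j)$ directly at the generic point of $\hat{D}_\Sc$, working on the chart $\widehat{\Uc}^1_\xx\subset \widehat{\Uc}_\xx$; the same computation on any other chart $\widehat{\Uc}^k_\xx$ yields the same answer because the local ring at the generic point of $\hat D_\Sc$ is intrinsic. By the discussion immediately preceding the lemma, $\widehat{\Uc}^1_\xx$ is the affine subvariety of $\Uc_\xx\times\mathbb A^{r-1}$ (with coordinates $v_2,\dots,v_r$) defined by the $r-1$ binomial equations
\[
v_j\,t_j^{m_j}=t_1^{m_1},\qquad j=2,\dots,r.
\]
The coordinate projection onto $\Uc_\xx$ is birational (over $t_1\cdots t_r\neq 0$ each $v_j$ is uniquely determined by $v_j=t_1^{m_1}/t_j^{m_j}$), so $\widehat{\Uc}^1_\xx$ is irreducible of dimension $n-3$. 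Let $R$ denote its coordinate ring. On this chart $\hat{D}_\Sc$ is cut out by $\mathfrak p:=(t_1,\dots,t_r)$, and since $R/\mathfrak p\simeq\C[u,v_2,\dots,v_r]$ is a domain, $\mathfrak p$ is prime and $\hat{D}_\Sc$ is an irreducible reduced divisor.

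The key conceptual point is that when several of the $m_i$ are $\ge 2$ the local ring $\Ocal_\eta:=R_{\mathfrak p}$ at the generic point $\eta$ of $\hat{D}_\Sc$ is not a DVR---its maximal ideal genuinely requires all $r$ generators $t_1,\dots,t_r$, as different arcs approaching $\eta$ give different limits of $t_j/t_k$, so one cannot read vanishing orders off as exponents of a local uniformizer. I therefore use the standard length-interpretation of vanishing order along a prime Weil divisor on a possibly non-normal scheme,
\[
\mathrm{ord}_{\hat{D}_\Sc}(t_j):=\mathrm{length}_{\Ocal_\eta}\bigl(\Ocal_\eta/(t_j)\bigr),
\]
which is finite because $t_j$ is a nonzerodivisor in the domain $R$ and $\Ocal_\eta$ is one-dimensional, and which coincides with the usual DVR valuation whenever $\Ocal_\eta$ is a DVR.

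The length is then immediate from the binomial form of the equations. Each $v_i$ ($i\ge 2$) lies outside $\mathfrak p$ and becomes a unit $c_i\in\Ocal_\eta^\times$, so the defining equations in $\Ocal_\eta$ read $t_i^{m_i}=c_i t_1^{m_1}$. Set $K:=\mathrm{Frac}(R/\mathfrak p)$. When $j=1$, the condition $t_1=0$ forces $t_i^{m_i}=0$ for every $i\ge 2$, so $\Ocal_\eta/(t_1)\cong K[t_2,\dots,t_r]/(t_2^{m_2},\dots,t_r^{m_r})$, an Artinian local $K$-algebra with $K$-basis $\{t_2^{a_2}\cdots t_r^{a_r}:0\le a_i<m_i\}$, hence of length $\prod_{i=2}^r m_i=\prod_i m_i/m_1$. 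When $j\ge 2$, the condition $t_j=0$ in the $j$-th equation forces $t_1^{m_1}=0$, and then the remaining equations give $t_i^{m_i}=0$ for all $i\neq 1,j$; so $\Ocal_\eta/(t_j)\cong K[t_i:i\neq j]/(t_i^{m_i}:i\neq j)$ has length $\prod_{i\neq j}m_i=\prod_i m_i/m_j$. The main obstacle is the conceptual one identified above; once lengths replace naive valuations, the binomial shape of the defining equations makes the rest a one-step calculation.
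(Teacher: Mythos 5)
Your proof is correct and follows essentially the same route as the paper: both identify the chart of $\widehat{\Uc}_\xx$ via the binomial equations $v_jt_j^{m_j}=v_kt_k^{m_k}$, localize at the prime $(t_1,\dots,t_r)$ defining $\hat{D}_\Sc$, invoke the length definition of $\mathrm{ord}_{\hat{D}_\Sc}$ from \cite[\textsection 1.2]{Ful} (needed precisely because, as you note, the local ring is not a DVR), and compute the length of the resulting Artinian quotient $K[t_i]/(t_i^{m_i})$ as a dimension over the residue field. The only difference is cosmetic: you work on the chart $v_1\neq 0$ and treat all $j$ there, whereas the paper works on $v_r\neq 0$ and computes $\mathrm{ord}(t_r)$, the other cases following by symmetry.
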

\begin{proof}
Over $\Uc^*_\xx\times\CP^{r-1}$, since $t_1\cdots t_r \neq 0$, the defining equations  of $\widehat{\Uc}_\xx$ are equivalent to
\begin{equation}\label{eq:def:eqns:div:P1}
v_1t_1^{m_1}=\dots=v_rt_r^{m_r}.
\end{equation}
Therefore we can actually use \eqref{eq:def:eqns:div:P1} to define $\widehat{\Uc}_\xx$.
For concreteness, let us assume that $j=r$.
Consider the open affine
$$\widehat{\Uc}^r_\xx:=\{(t,u,[v_1:\dots:v_r]) \in \widehat{\Uc}_\xx, \; v_r\neq 0\}
$$
of $\widehat{\Uc}_\xx$. Note that $\widehat{\Uc}_\xx^r$ is defined in $\Uc_\xx\times\C^{r-1}$ by the equations
$$
t_r^{m_r}=v_jt^{m_j}, \; \text{ for all } j \in \{1,\dots,r-1\}.
$$
In $\widehat{\Uc}^r_\xx$, the divisor $\hat{D}_\Sc$ is defined by the equations $t_1=\dots=t_r=0$.
Let $A$ be the localization of the coordinate ring of $\widehat{\Uc}^r_\xx$ at the ideal generated by $\{t_1,\dots,t_r\}$.
By definition, the vanishing order of $t_r$ along $\hat{D}_\Sc$ is given by $\ell(A/(t_r))$, that is the length of $A/(t_r)$ as an $A$-module (see \cite[\textsection 1.2]{Ful}).

Remark that $t_j^{m_j}=\frac{t_r^{m_r}}{v_j}$ in $A$. Therefore, every element of $A$ is represented by a quotient $P/Q$, with $P,Q \in \C(v_1,\dots,v_{r-1})[t_1,\dots,t_r]$, where for $j=1,\dots,r-1$, the degree of $t_j$ in $P$ and $Q$ is at most $m_j-1$, and the $(t_1,\dots,t_r)$-free term in $Q$ is non-zero.
We have
$$
A/(t_r) \simeq \C(v_1,\dots,v_{r-1})[t_1,\dots,t_{r-1}]/(t_1^{m_1},\dots,t_{r-1}^{m_{r-1}}).
$$
Observe that $\C(v_1,\dots,v_{r-1})$ is the residue field of $A$, and we have
$$
\dim_{\C(v_1,\dots,v_{r-1})}\C(v_1,\dots,v_{r-1})[t_1,\dots,t_{r-1}]/(t_1^{m_1},\dots,t_{r-1}^{m_{r-1}})=\prod_{j=1}^{r-1}m_j.
$$
Therefore, $\ell(A/(t_r))=\prod_{j=1}^{r-1}m_j$ (see \cite[Ex. A.1.1]{Ful}) as desired.
\end{proof}
 As a consequence we get

\begin{Theorem}\label{th:coeff:bound:div:in:E}
Let $\Sc=\{I_0,I_1,\dots,I_r\}$ be a partition in $\hat{\Pc}(\mu)$.
Then the coefficient of $\hat{D}_\Sc$ in $\Ec$ is equal to $(r-1)\cdot m(\Sc)$. In particular, we have
\begin{equation*}
\Ec\sim \sum_{\Sc\in \hat{\Pc}(\mu)}(|\Sc|-2)\cdot m(\Sc)\cdot \hat{D}_\Sc.
\end{equation*}
\end{Theorem}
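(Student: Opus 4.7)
The strategy is to determine, for each partition $\Sc\in\hat{\Pc}(\mu)$, the coefficient of $\hat{D}_\Sc$ in the Weil divisor attached to the Cartier divisor $\Ec$ by localizing at a generic point $\xx$ of the stratum $D^*_\Sc\subset\ol{\Mod}_{0,n}$. Since $\Ec$ is supported on $\partial\blowupsp$ and Theorem~\ref{th:part:bdry:div:blowup} identifies the irreducible components of $\partial\blowupsp$ precisely with $\{\hat{D}_\Sc:\Sc\in\hat{\Pc}(\mu)\}$, handling every such $\Sc$ suffices.

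For the case $|\Sc|=2$, i.e.\ $\Sc=\{I_0,I_1\}\in\Pc$, I would check that the curve $C_\xx$ at a generic $\xx\in D^*_\Sc$ has a unique principal subcurve: with the convention $\mu(I_0)\leq 1\leq \mu(I_1)$, either the unique node has weight zero (when $\mu(I_1)=1$) and $C_\xx$ forms a single principal subcurve, or $C^1_\xx$ alone is principal (when $\mu(I_1)>1$). By Proposition~\ref{prop:supp:I:princ:comp} this forces $\xx\notin\mathrm{supp}(\Ic)$, so $\hat{p}$ is an isomorphism near $\xx$ and $\Ec$ does not contain $\hat{D}_\Sc$; the coefficient is $0$, matching the factor $|\Sc|-2=0$.

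For the case $|\Sc|\geq 3$, i.e.\ $\Sc=\{I_0,I_1,\dots,I_r\}$ with $r\geq 2$ and $\mu(I_j)>1$ for every $j\geq 1$, each $m_j=d(\mu(I_j)-1)$ is a positive integer. I would adopt the local coordinates $t_1,\dots,t_r$ and the explicit description of $\widehat{\Uc}_\xx\subset\Uc_\xx\times\CP^{r-1}$ established just before Lemma~\ref{lm:vanish:ord:bdry:div:1}. On the affine chart $\widehat{\Uc}^k_\xx=\{v_k\neq 0\}$, the pullback ideal $\hat{p}^{-1}\Ic\cdot\Oc_{\blowupsp}$ is principal, generated by $t^{\beta_k}=\prod_{j\neq k}t_j^{m_j}$; in particular $\Ec$ is cut out locally by this monomial. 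Applying Lemma~\ref{lm:vanish:ord:bdry:div:1} termwise yields
\[
\mathrm{ord}_{\hat{D}_\Sc}(t^{\beta_k})=\sum_{j\neq k}m_j\cdot\mathrm{ord}_{\hat{D}_\Sc}(t_j)=\sum_{j\neq k}m_j\cdot\frac{m(\Sc)}{m_j}=(r-1)\cdot m(\Sc),
\]
which is exactly $(|\Sc|-2)\cdot m(\Sc)$, as claimed.

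Chart-independence is immediate since $t^{\beta_k}/t^{\beta_{k'}}=v_k/v_{k'}$ is invertible and regular on $\widehat{\Uc}^k_\xx\cap\widehat{\Uc}^{k'}_\xx$, so the vanishing order is well-defined globally. Summing the contributions over all $\Sc\in\hat{\Pc}(\mu)$ produces the identity~\eqref{eq:except:Weil:div}. I do not anticipate a serious obstacle: the real content is already packaged in Lemma~\ref{lm:vanish:ord:bdry:div:1} (a length computation in a local Artinian ring) together with the classification of boundary components from Theorem~\ref{th:part:bdry:div:blowup}; what remains here is only the clean bookkeeping above.
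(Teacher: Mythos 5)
Your proposal is correct and follows essentially the same route as the paper: localize at a generic point of $D^*_\Sc$, note that on the chart $\widehat{\Uc}^k_\xx$ the divisor $\Ec$ is cut out by $t^{\beta_k}=\prod_{j\neq k}t_j^{m_j}$, and apply Lemma~\ref{lm:vanish:ord:bdry:div:1} termwise to get $(r-1)\cdot m(\Sc)$. Your separate treatment of $|\Sc|=2$ via Proposition~\ref{prop:supp:I:princ:comp} is exactly the observation the paper records in Remark~\ref{rk:coeff:Weil:div:0}, so nothing is missing.
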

\begin{proof}
It is enough to consider an open affine that meets $\hat{D}_\Sc$.
We can take this open affine to be $\widehat{\Uc}^r_\xx \subset \widehat{\Uc}_\xx$. On  $\widehat{\Uc}^r_\xx$, the Cartier divisor $\Ec$  is defined by the function $t^{\beta_r}=\prod_{j=1}^{r-1}t_j^{m_j}$. By Lemma~\ref{lm:vanish:ord:bdry:div:1}, we know that the order of $t_j$ along $\hat{D}_\Sc$ is $\frac{\prod_{i=1}^{r}m_i}{m_j}$. Thus the order of $t_j^{m_j}$ is $\prod_{i=1}^r m_i = m(\Sc)$, and the order of $t^{\beta_r}$ is $(r-1)\cdot m(\Sc)$.
\end{proof}

\begin{Remark}\label{rk:coeff:Weil:div:0}
For all  $\Sc=\{I_0,I_1\}\in \Pc$, the coefficient of $\hat{D}_\Sc$ in $\Ec$ is zero, which can be explained by the fact that the divisor $D_\Sc \subset \partial\ol{\Mod}_{0,n}$ is not contained in the support of $\Ic$ (cf. Proposition~\ref{prop:supp:I:princ:comp}).
\end{Remark}
\section{The Kawamata line bundle}\label{sec:kaw:ln:bdl}
\subsection{Definition}\label{subsec:kaw:ln:bdl:def}
To prove Theorem~\ref{th:vol:n:inters:g0}, we will compare $\hat{\Lc}_\mu$ with (the pullback) of a line bundle $\bar{\Lc}_\mu$ over $\ol{\Mod}_{0,n}$. To define $\bar{\Lc}_\mu$, we consider the following line bundle over $\ol{\Cc}_{0,n}$: recall that for any $\Sc=\{I_0,I_1\}\in \Pcal$, $D_\Sc$ is the irreducible component of $\partial \ol{\Mod}_{0,n}$ associated to $\Sc$.
A generic point $\xx \in D_\Scal$ represents a stable curve $C_\xx$ with two irreducible components $C^0_\xx$ and $C^1_\xx$, where $C^j_\xx$ contains the $i$-th marked point if and only if $i \in I_j$.
By definition, $p^{-1}(D_\Scal)$ is a divisor of $\ol{\Ccal}_{0,n}$ with two irreducible components denoted by $F^0_{\Scal}$ and $F^1_{\Scal}$ such that $F^j_\Scal\cap C_\xx=C^j_\xx$. We then define
\[
\Kcal_\mu:=d\left( K_{\ol{\Ccal}_{0,n}/\ol{\Mod}_{0,n}}+\sum_{i=1}^n\mu_i\Gamma_i-\sum_{\Scal \in \Pcal}\mu_\Scal F^0_\Scal \right).
\]
(recall that $\mu_\Sc=\mu(I_1)-\mu(I_0)\geq 0$).
This bundle  was first introduced in \cite{Kawa}, we will call it the {\em Kawamata line bundle}. Note however that our convention on the numbering  $I_0,I_1$ is the converse of the one in \cite{Kawa}.

For any $\xx\in \ol{\Mod}_{0,n}$, denote by $\Kc_{\mu,\xx}$ the restriction of $\Kc_\mu$ to the curve $C_\xx \simeq p^{-1}(\{\xx\})$.
If $\xx \sim(\Pb^1,x_1,\dots,x_n) \in \Mod_{0,n}$, then $\dim H^0(C_\xx,\Kcal_{\mu,\xx})=1$ (since $\mathrm{deg}(\Kc_{\mu,\xx})=0$).
It follows that $p_*\Kcal_{\mu| \Ccal_{0,n}}$ gives a line bundle $\Lcal_\mu$ over $\Mod_{0,n}$.
The complement of the zero section in the total space of $\Lcal_\mu$ can be identified with $\stratesp$.
Our goal in this section is to show the following
\begin{Theorem}\label{th:push:Kaw:ln:bdl}
There is a line bundle $\bar{\Lcal}_\mu$ over $\ol{\Mod}_{0,n}$ such that $\Kcal_\mu\sim p^*\bar{\Lcal}_\mu$ and $\bar{\Lcal}_{\mu|\Mod_{0,n}} \sim \Lcal_\mu$. Moreover,  $\bar{\Lcal}_\mu$ is isomorphic to the line bundle associated to the divisor
\begin{align*}
\Dcal_\mu &= \frac{d}{(n-2)(n-1)}\sum_{\Scal=\{I_0,I_1\}\in \Pcal}(|I_0|-1)(|I_1|-1-(n-1)\mu_\Scal)\cdot D_\Scal\\
&\sim \frac{d}{2}\left(\sum_{i=1}^n-\mu_i\psi_i+\sum_{\Scal\in \Pcal}(1-\mu_\Scal)D_\Scal\right).
\end{align*}
\end{Theorem}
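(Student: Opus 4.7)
The plan is to prove the theorem in three stages: (i) show $\Kcal_\mu$ is fibrewise trivial along $p$, (ii) deduce the existence of $\bar{\Lcal}_\mu$ with $p^*\bar{\Lcal}_\mu \simeq \Kcal_\mu$, and (iii) identify its divisor class by restriction to a tautological section. For stage (i), I verify that $\Kcal_\mu$ has degree zero on every irreducible component of every fibre. Over $\Mod_{0,n}$ this is immediate: $d(-2+\sum_i \mu_i) = 0$. On a component $C^j_\xx$ of a nodal fibre I use $\deg(K_{\mathrm{rel}}|_{C^j_\xx}) = -2 + |\NN_j(\xx)|$ (where $K_{\mathrm{rel}} := K_{\ol{\Ccal}_{0,n}/\ol{\Mod}_{0,n}}$), together with $\deg(\Gamma_i|_{C^j_\xx}) = 1$ iff $i \in I_j$, and compute $\deg(F^0_\Scal|_{C^j_\xx})$ via the identity $F^0_\Scal + F^1_\Scal \sim p^*D_\Scal$: for each node $\alpha = \{j,j'\} \in \NN_j(\xx)$, the opposite boundary component $F^{1-\epsilon}_{\Scal_\alpha}$ meets $C^j_\xx$ transversely at $y_{jj'}$, so $\deg(F^0_{\Scal_\alpha}|_{C^j_\xx}) = \mp 1$ according to which side of $\alpha$ contains $C^j_\xx$. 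Combined with the identity $-\mu_{\Scal_\alpha}\cdot \deg(F^0_{\Scal_\alpha}|_{C^j_\xx}) = \mu(y_{jj'})-1$, the total degree telescopes via \eqref{eq:sum:weight:comp} to $d(-2 + \mu(I_j) + \sum_{j'\in \NN_j(\xx)} \mu(y_{jj'})) = 0$. Since every fibre has tree-shaped dual graph, $\mathrm{Pic}$ of the fibre is the direct product of the Picard groups of its components, so multi-degree zero forces triviality.

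Stage (ii) is a direct application of Cohomology and Base Change along the lines of \textsection\ref{subsec:d:diff:bundle}: fibrewise triviality gives $h^0(C_\xx, \Kcal_{\mu,\xx}) = 1$ and $h^1 = 0$ for all $\xx$, so $\bar{\Lcal}_\mu := p_*\Kcal_\mu$ is a line bundle; the adjunction map $p^*\bar{\Lcal}_\mu \to \Kcal_\mu$ is a nowhere-vanishing map between line bundles, hence an isomorphism; and the restriction $\bar{\Lcal}_{\mu|\Mod_{0,n}}$ reproduces $\Lcal_\mu$ tautologically.

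For stage (iii), restricting $\Kcal_\mu \simeq p^*\bar{\Lcal}_\mu$ to the $i$-th tautological section $\sigma_i$ gives $\bar{\Lcal}_\mu \simeq \sigma_i^*\Kcal_\mu$. The standard identities $\sigma_i^* K_{\mathrm{rel}} \sim \psi_i$, $\sigma_i^*\Gamma_i \sim -\psi_i$ (adjunction on the section), $\sigma_i^*\Gamma_j = 0$ for $j \neq i$ (disjoint sections), and $\sigma_i^*F^0_\Scal = D_\Scal$ when $i$ lies in the $I_0$-part of $\Scal$ and $0$ otherwise (from $F^0_\Scal+F^1_\Scal \sim p^*D_\Scal$ combined with the disjointness of $\Gamma_i$ from the opposite boundary component) yield
\[
\bar{\Lcal}_\mu \sim d\Bigl((1-\mu_i)\psi_i \,-\, \sum_{\Scal:\, i\in I_0(\Scal)} \mu_\Scal D_\Scal\Bigr) \qquad \text{for every } i.
\]
Summing these $n$ equations weighted by $\mu_i$ (using $\sum_i \mu_i = 2$ and $\sum_{i \in I_0}\mu_i = 1-\mu_\Scal$) produces
\[
2\bar{\Lcal}_\mu \sim d\Bigl(\sum_i \mu_i(1-\mu_i)\psi_i \,-\, \sum_\Scal \mu_\Scal(1-\mu_\Scal) D_\Scal\Bigr).
\]
To reach the target symmetric form $\tfrac{d}{2}\bigl(-\sum_i \mu_i \psi_i + \sum_\Scal (1-\mu_\Scal)D_\Scal\bigr)$, I invoke the Keel-type relation $\psi_i + \psi_j \sim \sum_{\Scal:\, \{i,j\} \text{ separated by } \Scal} D_\Scal$; combined with $1 - \mu_\Scal^2 = \mu(I_0)\mu(I_1) = \sum_{(i,j)\in I_0\times I_1} \mu_i\mu_j$, this gives the needed identity $\sum_i \mu_i(2-\mu_i)\psi_i = \sum_\Scal(1-\mu_\Scal^2)D_\Scal$. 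The equivalent pure-boundary-divisor form of $\Dcal_\mu$ then follows by substituting the standard Keel expression for each $\psi_i$ in terms of boundary divisors and collecting coefficients. The main obstacle is stage (iii): the case analysis for $\sigma_i^*F^0_\Scal$ and the combinatorial reorganisation of weighted sums using Keel's relation must both be carried out carefully, especially in view of the convention $\mu(I_0)\leq 1 \leq \mu(I_1)$.
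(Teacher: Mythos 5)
Your proof is correct, but it takes a genuinely different route from the paper's. For the existence of $\bar{\Lcal}_\mu$ the paper does not argue fibrewise: Proposition~\ref{prop:kaw:trivial:sect} shows that the explicit sections $t^{\beta_j}\Phi_j$ of \textsection\ref{subsec:ratio:sect:rel:can} trivialize $\Kcal_\mu$ over the whole preimage $p^{-1}(\Ucal)$ of a small open set, so every section of $\Kcal_\mu$ there is a function, constant on the compact connected fibres, and $p_*\Kcal_\mu$ is at once a line bundle with $\Kcal_\mu\sim p^*p_*\Kcal_\mu$; no base-change theorem is needed, and the same trivializations are reused in \textsection\ref{sec:proof:vol:n:inters:nb} to prove \eqref{eq:tauto:ln:bdl:expr}. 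Your substitute --- multidegree zero on every component of every fibre (your identity $-\mu_{\Scal_\alpha}\deg(F^0_{\Scal_\alpha}|_{C^j_\xx})=\mu(y_{jj'})-1$ combined with \eqref{eq:sum:weight:comp} is exactly right), triviality on fibres because the dual graph is a tree with rational components, then cohomology and base change plus the evaluation map --- is valid and more standard. For the divisor class the paper invokes Grothendieck--Riemann--Roch and defers the computation to \cite[\textsection 8.4]{KN18}, whereas you obtain it directly: pulling back along $\sigma_i$ gives $\bar{\Lcal}_\mu\sim d\bigl((1-\mu_i)\psi_i-\sum_{\Scal\colon i\in I_0}\mu_\Scal D_\Scal\bigr)$ for each $i$ (your case analysis of $\sigma_i^*F^0_\Scal$ is correct), and averaging with weights $\mu_i$, together with $\psi_i+\psi_j\sim\sum_{\Scal \ \mathrm{separating}\ i,j}D_\Scal$ and $\mu(I_0)\mu(I_1)=1-\mu_\Scal^2$, does give $\sum_i\mu_i(2-\mu_i)\psi_i\sim\sum_\Scal(1-\mu_\Scal^2)D_\Scal$ and hence the $\psi$-form of $\Dcal_\mu$; for the pure boundary form you should use the averaged Keel expression $\psi_i\sim\frac{1}{(n-1)(n-2)}\sum_\Scal |I'_{\Scal,i}|\,(|I'_{\Scal,i}|-1)\,D_\Scal$, where $I'_{\Scal,i}$ is the part of $\Scal$ not containing $i$ (a fixed-pair Keel expression only yields a linearly equivalent representative, which still suffices for the statement); I checked that this substitution reproduces the stated coefficients $(|I_0|-1)(|I_1|-1-(n-1)\mu_\Scal)$ exactly. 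What the paper's route buys is a reusable explicit trivialization tied to the monomials $t^{\beta_j}$ of the blow-up construction; what yours buys is independence from the local model of \textsection\ref{subsec:univ:curv:near:bdry} and a self-contained intersection-theoretic derivation of $\Dcal_\mu$ that avoids citing Grothendieck--Riemann--Roch and \cite{KN18}.
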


\begin{Remark}\label{rk:push:Kaw:ln:bdl}
In  the case all the weights $\mu_i$ are positive, Theorem~\ref{th:push:Kaw:ln:bdl} is a consequence of a result by Kawamata (see\cite[Th.4]{Kawa}).
\end{Remark}

\subsection{Trivializing sections of the Kawamata line bundle}\label{subsec:kaw:triv:sect}
Let $\xx$ be a point in a stratum of codimension $r$ in $\ol{\Mod}_{0,n}$. Let $\Uc$, $\ol{\Cc}_\Uc$ be as in \textsection\ref{subsec:univ:curv:near:bdry}.
We will prove
\begin{Proposition}\label{prop:kaw:trivial:sect}
Let $\{t^{\beta_j}, \, j=0,\dots,r\}$ be the monomials defined in \eqref{eq:def:t:beta} and $\{\Phi_j,\; j=0,\dots,r\}$ be as in \eqref{eq:def:Phi}. Then for all $j\in \{0,1,\dots,r\}$,  $t^{\beta_j}\Phi_j$  gives a trivializing section of the Kawamata line bundle $\Kcal_\mu$ on $\ol{\Ccal}_{\Ucal}$.
\end{Proposition}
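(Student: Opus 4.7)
The strategy is to compute $\mathrm{div}_{\Kc^{(d)}_{0,n}}(t^{\beta_j}\Phi_j)$ on $\ol{\Cc}_\Uc$ and compare it with the divisor class difference $E := \Kc^{(d)}_{0,n} - \Kcal_\mu$. If the two coincide on $\ol{\Cc}_\Uc$, then via the canonical inclusion $\Kcal_\mu \hookrightarrow \Kc^{(d)}_{0,n}$ obtained by tensoring with the tautological section $s_E \in H^0(\Oc(E))$, the section $t^{\beta_j}\Phi_j$ is the image of a nowhere-vanishing section of $\Kcal_\mu$, which is exactly the desired trivialization.

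First, I would identify $E$. Using the identity $d\mu_i = -k_i$, a direct computation gives
\begin{align*}
\Kc^{(d)}_{0,n} - \Kcal_\mu &= \sum_{i=1}^n (d-1)\Gamma_i - d\sum_{i=1}^n \mu_i \Gamma_i + d\sum_{\Sc\in\Pc} \mu_\Sc F^0_\Sc\\
&= \sum_{i=1}^n(k_i+d-1)\Gamma_i + d\sum_{\Sc\in\Pc}\mu_\Sc F^0_\Sc,
\end{align*}
which is an effective divisor since $k_i + d - 1 \geq 0$ and $\mu_\Sc \geq 0$.

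Next I would compute $\mathrm{div}_{\Kc^{(d)}_{0,n}}(t^{\beta_j}\Phi_j)$ divisor-by-divisor on $\ol{\Cc}_\Uc$. By Proposition~\ref{prop:Kaw:trivial:ratio}, the sections for different $j$ differ by nowhere-vanishing holomorphic factors on $\Uc$, so the divisor is $j$-independent and I am free to choose the most convenient index at each stage. At a generic point of $\Gamma_i$, the fiber coordinate $z_j$ makes $(z_j-a_{ji})$ a local equation of $\Gamma_i$ and $(dz_j)^d$ a local frame of $d\cdot K$; hence $\Phi_j$ has order $k_i$ in $d\cdot K$ and order $k_i+(d-1)$ in $\Kc^{(d)}_{0,n}$, while the base factor $t^{\beta_j}$ contributes nothing. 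For a node $\alpha\in\NN(\xx)$, I pick $j$ with $\beta_{j,\alpha}=0$ (equivalently $C^j_\xx\subset\hat{C}^1_{\xx,\alpha}$); formula \eqref{eq:phi:near:node} then reads
$$
\Phi_j = (z_j-b_{jj'})^{d\mu_\alpha}\cdot(\text{unit})\cdot\left(\frac{dz_j}{z_j-b_{jj'}}\right)^d,
$$
where the exponent $d+\sum_{i\in I_{0,\alpha}}k_i$ is recognised as $d\mu_\alpha$ and $\frac{dz_j}{z_j-b_{jj'}}$ is a local trivialization of $K_{\ol{\Cc}_\Uc/\Uc}$. Since the node equation $(z_j-b_{jj'})(z_{j'}-b_{j'j})=t_\alpha$ makes $(z_j-b_{jj'})$ a local defining equation for $F^0_{\Sc_\alpha}$ off the node (and a unit along $F^1_{\Sc_\alpha}$ generically), this yields vanishing order $d\mu_\alpha$ along $F^0_{\Sc_\alpha}$ and $0$ along $F^1_{\Sc_\alpha}$. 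Summing all contributions gives $\mathrm{div}_{\Kc^{(d)}_{0,n}}(t^{\beta_j}\Phi_j) = E|_{\ol{\Cc}_\Uc}$, the remaining boundary divisors $F^\epsilon_\Sc$ with $\Sc\notin\{\Sc_\alpha:\alpha\in\NN(\xx)\}$ not meeting $\ol{\Cc}_\Uc$ by construction of $\Uc$.

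The main obstacle is the clean local bookkeeping at the nodes — determining which side of the node absorbs the zero of $\Phi_j$ — but formula \eqref{eq:phi:near:node} already contains all the needed data, provided $j$ is chosen so that the factor $t^{\beta_j}$ does not mask the contribution. Away from marked points and nodes, $\Phi_j$ is a unit multiple of the local frame $(dz_j)^d$, so no extra divisor appears, and the computation closes.
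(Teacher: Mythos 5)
Your argument is correct and reaches the paper's conclusion, but it is organized differently. The paper's proof is pointwise: at every point of $\ol{\Cc}_\Uc$ (a smooth point of a fiber or a node) it exhibits one index $j$ for which $t^{\beta_j}\Phi_j$ is visibly a product of local trivializations of the constituents $dK_{\ol{\Cc}_\Uc/\Uc}$, $d\mu_i\Gamma_i$ and $-d\mu_{\Sc_\alpha}F^0_{\Sc_\alpha}$ --- using exactly the local data you use, namely $(z_j-a_{ji})$ as an equation of $\Gamma_i$, the formula \eqref{eq:phi:near:node} and the frame $dz_j/(z_j-b_{jj'})$ of the relative dualizing sheaf at a node --- and then invokes Proposition~\ref{prop:Kaw:trivial:ratio} to pass from that one $j$ to all $j$. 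You instead compute the full zero divisor of $t^{\beta_j}\Phi_j$ as a section of $\Kc^{(d)}_{0,n}$ and match it with the effective divisor $E$ satisfying $\Kcal_\mu=\Kc^{(d)}_{0,n}-E$, i.e. $E=\sum_i(k_i+d-1)\Gamma_i+d\sum_{\alpha\in\NN(\xx)}\mu_{\Sc_\alpha}F^0_{\Sc_\alpha}$ on $\ol{\Cc}_\Uc$, then divide by the canonical section $s_E$. Your multiplicity bookkeeping is right: order $k_i+d-1$ along $\Gamma_i$, order $d\mu_\alpha=d+\sum_{i\in I_{0,\alpha}}k_i$ along $F^0_{\Sc_\alpha}$ and $0$ along $F^1_{\Sc_\alpha}$ (with the correct identification that, for your choice $\beta_{j,\alpha}=0$, the branch $\{z_j-b_{jj'}=0\}$ is the $F^0_{\Sc_\alpha}$ side), and the $j$-independence of the divisor via Proposition~\ref{prop:Kaw:trivial:ratio} is exactly the right tool. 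What your route buys is a clean ``one computation per prime divisor'' statement; what it costs is an extra obligation the pointwise proof does not have: you must show $\mathrm{div}(t^{\beta_j}\Phi_j)$ has no components other than the ones you listed.

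That last step is the one place your write-up is too quick. As stated, ``away from marked points and nodes, $\Phi_j$ is a unit multiple of the local frame $(dz_j)^d$'' is false for a fixed $j$: it fails along $\{z_j=\infty\}$ and along the boundary components whose fiber components are collapsed by $\pi_j$ (there $dz_j$ vanishes as a section of the relative dualizing sheaf, consistently with the positive order $d\mu_\alpha$ you computed at the node). The repair is easy and in the spirit of your own remark that $j$ may be chosen point by point: any unlisted component of the divisor would be either horizontal or vertical over a hypersurface of $\Uc$ not contained in the boundary, and both are excluded because the restriction of $\Phi_j$ to a fiber over $\Uc^*$ has divisor exactly $\sum_i k_i\,x_i$ (the degree count $\sum_i k_i=-2d$ also gives order $0$ along $\{z_j=\infty\}$, checked in the chart $w_j=1/z_j$), while the only vertical divisors over the boundary are the $F^0_{\Sc_\alpha}$, $F^1_{\Sc_\alpha}$ you have already handled. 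With that sentence added, your proof is complete and equivalent in content to the paper's.
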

\begin{proof}
Let $(z^0,t^0,u^0)$ be a point in $\ol{\Ccal}_{\Ucal}$. We will show that for some $j\in \{0,\dots,r\}$, $t^{\beta_j}\Phi_j$ gives a trivializing section of $\Kc_\mu$ on a neighborhood of $(z^0,t^0,u^0)$ and use Proposition~\ref{prop:Kaw:trivial:ratio} to conclude.
Assume first that $z^0$ is a smooth point of $C(t^0,u^0)$, which means that $z^0$ is contained in a unique irreducible component $C^k(t^0,u^0)$ of $C(t^0,u^0)$. By Lemma~\ref{lm:univ:curv:geom:codim2}, for some $j\in \{0,\dots,r\}$, the projection $\pi_j$ restricts to an isomorphism from $C^k(t^0,u^0)$ onto $\CP^1$. Thus $\Phi_j$ is a trivializing section of the line bundle $d(K_{\ol{\Ccal}_{\Ucal}/\Ucal}+\sum_{i=1}^n\mu_i\Gamma_i)$ in a neighborhood of $(z^0,t^0,u^0)$.
It remains to check that $t^{\beta_j}$ is a trivializing section of $\sum_{\Scal \in \Pcal} -d\mu_\Scal F^{0}_\Scal$ near $(z^0,t^0,u^0)$.
Since $\Ucal$ only  intersects  $D_\Scal$ if $\Scal$ corresponds to a node of $C_\xx$,  all we need to show is that $t^{\beta_j}$ is a trivializing section of the line bundle $\sum_{\alpha\in\NN(\xx)}-d\mu_{\Sc_\alpha}F^0_{\Sc_\alpha}$.   For simplicity, in what follows, for all $\alpha \in \NN(\xx)$ we will write  $\mu_\alpha$ and $F^j_\alpha$ ($j=0,1$) instead of $\mu_{\Sc_\alpha}$ and $F^j_{\Sc_\alpha}$.

Since $(t^0,u^0) \in \Ucal$, the set of partitions $\Sc\in \Pc$ such that $D_{\Sc}$ contains $(t^0,u^0)$ is in bijection with the set $Z(t^0)=\{\alpha\in \NN(\xx), \; t_\alpha=0\}$. Define $Z_0(t^0):=\{\alpha \in \NN(\xx), \; C^k(t^0,u^0)\subset F^0_{\alpha}\}$. Since $(z^0,t^0,u^0) \in F^0_\alpha$ if and only if $\alpha\in Z_0(t^0)$, a trivializing section of the divisor $\sum_{\alpha\in \NN(\xx)}-d\mu_{\alpha}F^0_{\alpha}$ in a neighborhood of $(z^0,t^0,u^0)$ is given by $\prod_{\alpha \in Z_0(t^0)}t_\alpha^{d\mu_{\alpha}}$. By definition, we immediately get
\begin{equation*}\label{eq:triv:sect:F0}
\prod_{\alpha \in Z_0(t^0)}t_\alpha^{d\mu_{\alpha}}=\prod_{\alpha \in Z_0(t^0)}t_\alpha^{\beta_{j,\alpha}}=\prod_{\alpha \in Z(t^0)}t_\alpha^{\beta_{j,\alpha}}.
\end{equation*}

Since $t^0_\alpha \neq 0$ for all $\alpha\in \NN(\xx)\setminus Z(t^0)$, we conclude that  $t^{\beta_j}=\prod_{\alpha\in \NN(\xx)}t_\alpha^{\beta_{j,\alpha}}$ is a trivializing section of $\sum_{\Scal \in \Pcal} -d\mu_\Scal F^{0}_\Scal$ near $(z^0,t^0,u^0)$.
As a consequence, $t^{\beta_j} \Phi_j$ is a trivializing section of the Kawamata line bundle in a neighborhood of $(z^0,t^0,u^0)$.

\medskip

Assume now that $z^0$ is a node of $C(t^0,u^0)$ which is the intersection of two components $C^{k}(t^0,u^0)$ and $C^{k'}(t^0,u^0)$.
Let $\alpha=\{j,j'\} \in Z(t^0)$ be the corresponding node in $C_\xx$, where  the restriction of $\pi_j$ (resp. of $\pi_{j'})$) to $C^k(t^0,u^0)$ (resp. to $C^{k'}(t^0,u^0)$) is an isomorphism onto $\CP^1$.
We can also assume that $\beta_{j,\alpha}=0$ and $\beta_{j',\alpha}=d\mu_\alpha$, which means that  $C^{k'}(t^0,u^0)$ is contained in the divisor $F^0_{\alpha}$. Note that near the point $(z^0,t^0,u^0)$, the divisor $F^0_{\alpha}$ is defined by the equation $z_j-b_{jj'}=0$.

Recall from \eqref{eq:phi:near:node} that we have
$$
\Phi_j=(-1)^{\sum_{i\in I_{0,\alpha}}k_i}(z_j-b_{jj'})^{d\mu_\alpha}\cdot \prod_{i\in I_{1,\alpha}}(z_j-a_{ji})^{k_i} \cdot \prod_{i\in I_{0,\alpha}}\left(\frac{z_{j'}-a_{j'i}}{a_{j'i}-b_{j'j}}\right)^{k_i} \cdot \left( \frac{dz_j}{z_j-b_{jj'}} \right)^d.
$$
where $z_j-a_{ji}$ with $i \in I_{1,\alpha}$, and $(z_{j'}-a_{j'i}/(a_{j'i}-b_{j'j})$  with $i\in I_{0,\alpha}$ are invertible near $(z^0,t^0,u^0)$.
In a neighborhood of $(z^0,t^0,u^0)$, a trivializing section of $K_{\ol{\Ccal}/\Ucal}$ is given by $dz_j/(z_j-b_{jj'})$, while $(z_j-b_{jj'})^{d\mu_{\alpha}}$ is a trivializing section of $-d\mu_{\alpha}F^0_{\alpha}$.
Thus $\Phi_j$ is a trivializing section of $d(K_{\ol{\Ccal}/\Ucal}-\mu_{\alpha}F^0_{\alpha})$  in this neighborhood.

We claim that $t^{\beta_j}$ is a trivializing section of $\sum_{\alpha'\in \NN(\xx)\setminus\{\alpha\}}-d\mu_{\alpha'}F^0_{\alpha'}$.
Since $\beta_{j,\alpha}=0$, we have $t^{\beta_j}=\prod_{\alpha'\in \NN(\xx)\setminus\{\alpha\}} t_{\alpha'}^{\beta_{j,\alpha'}}$.
Then the same argument as the previous case allows us to conclude.
Since $(z^0,t^0,u^0)$ does not belong to any divisor $\Gamma_i$'s, it follows that $t^{\beta_j}\Phi_j$ is a trivializing section of $\Kcal_\mu$ in a neighborhood of $(z^0,t^0,u^0)$.

\medskip

We have then showed that for every $(z^0,t^0,u^0) \in \ol{\Ccal}_{\Ucal}$ there is some $j\in \{0,\dots,r\}$ such that $t^{\beta_j}\Phi_j$ is a trivializing section of $\Kcal_\mu$ in a neighborhood of $(z^0,t^0,u^0)$. Since all the sections $t^{\beta_j}\Phi_j$'s differ from a fixed one, say $t^{\beta_0}\Phi_0$, by a non-vanishing function depending only on $(t,u)$ by Proposition~\ref{prop:Kaw:trivial:ratio}, we conclude that all of them are trivializing sections of $\Kcal_\mu$ on $\ol{\Ccal}_{\Ucal}$.
\end{proof}

\subsection{Proof of Theorem~\ref{th:push:Kaw:ln:bdl}}\label{sec:prf:push:Kaw:ln:bdl}
\begin{proof}
By Proposition~\ref{prop:kaw:trivial:sect}, every point $\xx\in  \ol{\Mod}_{0,n}$ has a neighborhood $\Ucal$ such that the line bundle $\Kcal_\mu$ is trivial over $p^{-1}(\Ucal)$.
Every holomorphic section $\Phi$ of $\Kcal_\mu$ on $p^{-1}(\Ucal)$ is then given by a holomorphic function.
Since all the fibers of $p$ are compact and connected, $\Phi$ must be constant on the fibers.
Thus $\Phi=p^*f$, where $f$ is a holomorphic function on $\Ucal$.
This implies that $p_*\Kcal_{\mu|\Ucal} \sim \Ocal_{\Ucal}$.
Thus $p_*\Kcal_\mu$ is a line bundle $\bar{\Lcal}_\mu$ on $\ol{\Mod}_{0,n}$ and we have $\Kcal_\mu \sim p^*\bar{\Lcal}_\mu$.
It is clear from the definition that the restriction of $\bar{\Lcal}_\mu$ to $\Mod_{0,n}$ is $\Lcal_\mu$.

The expression of the divisor on $\ol{\Mod}_{0,n}$ corresponding to $\bar{\Lcal}_\mu$ can be derived from the Grothendieck-Riemann-Roch formula. Details of the calculation can be found in \cite[\textsection 8.4]{KN18}.
\end{proof}


\section{Proof of Theorem~\ref{th:vol:n:inters:g0}}\label{sec:proof:vol:n:inters:nb}
\begin{proof}
Remark that the equivalence \eqref{eq:except:Weil:div} has been proved in Theorem~\ref{th:coeff:bound:div:in:E}. Let us prove the equivalence \eqref{eq:tauto:ln:bdl:expr}.
Consider a point $\xx$ in a stratum of codimension $r\geq 1$ of $\ol{\Mod}_{0,n}$. Let $\Ucal_\xx, (t,u)$ be as in \textsection\ref{subsec:blowup:construct}.
Recall that $\widehat{\Ucal}_\xx$ is covered by the family of open subsets
$$
\widehat{\Ucal}_\xx^i:=\{(t,u,[v_0:\dots:v_r])\in \widehat{\Ucal}_\xx, \, v_i\neq 0\}, \, i=0,\dots,r.
$$
On each $\widehat{\Ucal}^i_\xx$, the Cartier divisor $\Ecal$ is defined by $t^{\beta_i}$.
In the proof of Theorem~\ref{th:inject:blowup:proj:bdl}, we have seen that a trivialization of  $\OO(-1)_{|\widehat{\Ucal}^i_\xx}$ is given by $\Phi_i$.
Since we can write $\Phi_i=\frac{t^{\beta_i}\Phi_i}{t^{\beta_i}}$, and $t^{\beta_i}\Phi_i$ is a trivialization of the bundle $\bar{\Lcal}_\mu$ over $\Ucal_\xx$ (see Proposition~\ref{prop:kaw:trivial:sect}) and $1/t^{\beta_i}$ can be identified with a trivializing section of the line bundle asociated to $\Ecal$ over $\widehat{\Ucal}^i_\xx$, we get
$$
\hat{\Lc}_\mu:=\OO(-1)_{\Pb\ol{\Hc}^{(d)}_{0,n|\blowupsp}}\sim \hat{p}^*\bar{\Lc}_\mu+\Ec \sim \hat{p}^*D_\mu+\Ec.
$$
Finally, the equality \eqref{eq:vol:n:inters:g0} is an immediate consequence of \cite[Th.1.4 (b)]{Ng22}.
\end{proof}
\begin{Remark}\label{rk:vol:n:inters:CMZ}
Equality \eqref{eq:vol:n:inters:g0} can also be derived from the results of \cite{CMZ19}.
\end{Remark}


\begin{thebibliography}{ABC9}

\bibitem{ACG2011} E.~Arbarello, M.~Cornalba, P. A.~Griffiths: Geometry of Algebraic Curves. Vol II (with a contribution by J. Harris) Grundlehren der Mathematischen Wissenschaften 268, {\em Springer, Heidelberg} (2011).



\bibitem{AEZ16} J.S. Athreya, A. Eskin, and A. Zorich: Right-angled billiards and volumes of moduli spaces of quadratic differentials on $\mathbb{CP}^1$, with an appendix by Jon Chaika, {\em Ann. Sci. de l'E.N.S.} (4) 49 (2016), no. 6, 1311--1386.

\bibitem{BCGGM1} M. Bainbridge, D. Chen, Q. Gendron, S. Grushevsky, M. M\"oller: Compactification of strata of Abelian differentials, {\em Duke Math. Journ. 167} (2018), no.12, 2347--2416.

\bibitem{BCGGM2} M. Bainbridge, D. Chen, Q. Gendron, S. Grushevsky, M. M\"oller: Strata of $k$-differentials, {\em Algebr. Geom. 6} (2019), no.2, 196--233.

\bibitem{BCGGM:multiscale} M. Bainbridge, D. Chen, Q. Gendron, S. Grushevsky, M. M\"oller: The moduli spaces of multi-scale differentials, {\em preprint}, {arxiv:1910.13492}.

\bibitem{ChenGen21} D.~Chen and Q.~Gendron: Towards a classification of connected components of the strata of $k$-differentials, {\em preprint}, {arxiv:2101.01650}.

\bibitem{CGHMS22} D. Chen, S. Grushevsky, D. Holmes, M. M\"oller, J. Schmitt: A tale of two moduli spaces: logarithmic and mutiscale differentials, {\em preprint}, {arxiv:2212.04704}


\bibitem{CMS19} D.~Chen, M.~M\"oller, and A.~Sauvaget: Masur-Veech volumes and intersection theory: the principal strata of quadratic differentials (with an appendix by G.~Borot, A.~Giacchetto, and D.~Lewanski), {\em Duke Math. Journal} (to appear), {arxiv:1912.02267}.


\bibitem{CMZ19} M.~Costantini, M.~M\"oller, and J.~Zachhuber: The area is a good enough metric, {\em Ann. Inst. Fourier} (to appear), arxiv:1910.14151.


\bibitem{EH00} D.~Eisenbud and J.~Harris: The geometry of schemes, {\em Graduate Texts in Math.} {\bf 197}, Springer  (2000).

\bibitem{Engel-I} P.~Engel: Hurwitz Theory of elliptic orbifolds I, {\em Geometry \& Topology} {\bf 25} (2021), 229--274. 

\bibitem{ES18} P.~Engel and P.~Smillie: The number of convex tilings of the sphere by triangles, squares, or hexagons, {\em Geometry \& Topology} {\bf 22} (2018), 2839--2864.




\bibitem{EMM15} A.~Eskin, M.~Mirzakhani,  and  A.~Mohammadi:  Isolation, equidistribution, and orbit closures for the $\SL(2,\R)$ action on moduli space, {\em Annals of Math.} {\bf 182} (2015), no.2, pp.~673--721.

\bibitem{EM18} A.~Eskin and  M.~Mirzakhani:  Invariant and stationary measures for the $\SL(2,\R)$ action on moduli space, {\em Publ. Math. I.H.\'{E}.S.} {\bf 127} (2018), no.1, 95--324.







\bibitem{FarPan18} G.~Farkas and R.~Pandharipande: The moduli space of twisted canonical divisors (with an appendix by F. Janda, R. Pandharipande, A. Pixton, and D. Zvonkine), {\em J.  Inst.  Math. Jussieu}, {\bf 17} (2018), no. 3, pp.~615--672.

\bibitem{Fil16}  S.~Filip: Splitting mixed Hodge structures over affine invariant manifolds, {\em Annals of Math.} {\bf 183} (2016), no.2, pp.~681-713. 

\bibitem{Ful} W. Fulton: Intersection theory, {\bf Ergebnisse der Mathematik und ihrer Grenzgebiete 3}, 1984.

\bibitem{Gen18} Q.~Gendron: The Deligne-Mumford and the Incidence Variety Compactifications of the strata of the moduli space of Abelian differentials, {\em Annales de l'Institut Fourier} {\bf 68}, no.3 (2018), pp.~1169--1240.


\bibitem{Hart} R.~Hartshorne: Algebraic Geometry, {\bf Graduate Text in Mathematics 52}, Springer 1977.

\bibitem{Has03} B. Hassett: Moduli spaces of weighted pointed stable curves, {\em Adv. Math.} {\bf 173} (2003), 316--352.


\bibitem{Kawa} Y. Kawamata: Subadjunction of log canonical divisors for a subvariety of codimension 2, {\em Contemporary Math.} {\bf 207} (1997), 79--88

 \bibitem{Keel} S.~Keel: Intersection theory of Moduli Space of Stable $N$-pointed curves of genus zero, {\em Trans. Amer. Math. Soc.} {\bf 330}  (1992), 545--574.




\bibitem{KN18} V.~Koziarz and D.-M.~Nguyen: Complex hyperbolic volume and intersection of boundary divisors in moduli spaces of pointed genus zero curves, {\em Ann. Sci. de l'\'E.N.S. (4)} {\bf 51} (2018), no.6, 1549--1597.

\bibitem{KN20}V.~Koziarz and D.-M.~Nguyen: Variation of Hodge structure and enumeration of tilings of surfaces by triangles and squares, {\em J. \'Ecole Polytech. Math. 8} (2021), 831--857. 





\bibitem{MT02} H.~Masur and  S.~Tabachnikov: Rational  billiards  and   flat  structures,  {\em Handbook  of dynamical  systems, 1A},  {North-Holland,  Amsterdam} (2002), pp.~1015--1089.



\bibitem{McM_conegb} C.~McMullen: {The Gauss-Bonnet theorem for cone manifolds and volumes of moduli spaces}, {\em Amer. J. Math.}, {\bf 139} (2017), no.1, 261--291.




\bibitem{Ng19} D.-M.~Nguyen: Volume forms on moduli spaces of d-differentials, {\em Geometry \& Topology 26} (2022), 3173-3220.

\bibitem{Ng22} D.-M. Nguyen: On the volumes of linear subvarieties in moduli spaces of projectivized Abelian differentials, {\em preprint}, {arxiv:2209.12815}.

\bibitem{Nguyen23} D.-M. Nguyen: Intersection theory and volumes of moduli spaces of flat metrics on the sphere (with an appendix by Vincent Koziarz and Duc-Manh Nguyen), {\em preprint}.


\bibitem{Sau20} A.~Sauvaget: Volume of moduli space of flat surfaces, { preprint}, {\em arxiv:2004.03198}.


\bibitem{Sch18} J. Schmitt: Dimension theory of the moduli space of twisted $k$-differentials, {\em Documenta Mathematica} {\bf 23} (2018), 871--894.

\bibitem{Th98} W.P.~Thurston: Shapes of polyhedra, {\em Geom. \& Top. Monogr.}, Vol. 1: The Epstein birthday schrift, 511-549, Geom. Topol. Publ., Coventry (1998).

\bibitem{Veech:QD} W.A.~Veech: Moduli spaces of quadratic differentials, {\em J. Analyse Math.} {\bf 55} (1990), 117 -- 171.

\bibitem{Veech:FS} W.A. Veech: Flat surfaces, {\em Amer. J. Math.} {\bf 115} (1993), no.3, 589--689.



\bibitem{W:survey} A.~Wright:  From rational billiards to dynamics on moduli spaces, {\em Bull. Amer. Math. Soc. (N.S.)} {\bf 53} (2016), no. 1, 41--56.

\bibitem{Z:survey} A.~Zorich: {Flat surfaces},  {\em Frontiers   in  number  theory,   physics,  and  geometry}, Springer, Berlin (2006), 437--583.
\end{thebibliography}
\end{document}